\newtheorem{Lemma}      {Lemma} [section]
\newtheorem{Theorem}    [Lemma] {Theorem}
\newtheorem{Corollary}  [Lemma] {Corollary}
\newtheorem{Proposition}[Lemma] {Proposition}
\theoremstyle{definition}
\newtheorem{Definition} [Lemma] {Definition}
\newtheorem{Remark} [Lemma] {Remark}
\numberwithin{equation}{section}
\newcommand{\core}{\mathop{\mathrm{core}}}
\newcommand{\Ker}{\mathop{\mathrm{Ker}}}
\newcommand{\Stab}{\mathop{\mathrm{Stab}}}
\newcommand{\ab}{{a_{ab}}}
\newcommand{\nab}{{c_{nonab}}}
\newcommand{\anab}{{a_{nonab}}}
\newcommand{\bl}{{bl}}
\newcommand{\ct}{{c_{trans}}}
\newcommand{\cp}{{c_{prim}}}
\newcommand{\Sym}{\mathop{\mathrm{Sym}}}
\newcommand{\Alt}{\mathop{\mathrm{Alt}}}
\newcommand{\Gal}{\mathop{\mathrm{Gal}}}
\newcommand{\Aut}{\mathop{\mathrm{Aut}}}
\newcommand{\Out}{\mathop{\mathrm{Out}}}
\newcommand{\Soc}{\mathop{\mathrm{Soc}}}
\newcommand{\lpp}{\mathop{\mathrm{lpp}}}
\newcommand{\di}{{d_{I}}}
\newcommand{\ws}{{\widetilde{\omega}}}
\numberwithin{equation}{section}
\begin{document}
\title{Invariable generation of permutation and linear groups}
\author{Gareth Tracey%
\thanks{Electronic address: \texttt{G.M.Tracey@bath.ac.uk}}} 
\affil{Department of Mathematical Sciences, University of Bath,\\Bath BA2 7AY, United Kingdom}
\date{January 30, 2017}
\maketitle
\begin{abstract} A subset $\left\{x_{1},x_{2},\hdots,x_{d}\right\}$ of a group $G$ \emph{invariably generates} $G$ if $\left\{x_{1}^{g_{1}},x_{2}^{g_{2}},\hdots,x_{d}^{g_{d}}\right\}$ generates $G$ for every $d$-tuple $(g_{1},g_{2}\hdots,g_{d})\in G^{d}$. We prove that a finite completely reducible linear group of dimension $n$ can be invariably generated by $\left\lfloor \frac{3n}{2}\right\rfloor$ elements. We also prove tighter bounds when the field in question has order $2$ or $3$. Finally, we prove that a transitive [respectively primitive] permutation group of degree $n\geq 2$ [resp. $n\geq 3$] can be invariably generated by $O\left(\frac{n}{\sqrt{\log{n}}}\right)$ [resp. $O\left(\frac{\log{n}}{\sqrt{\log{\log{n}}}}\right)$] elements.\end{abstract}

\section{Introduction}
There is a continually growing body of literature which broadly concerns the various generation properties in finite groups. This has involved the analysis of many ``generation type" group theoretic invariants, such as the minimal size $d(G)$ of a generating set for the group $G$. In this paper, we study a related invariant: the minimal size of an \emph{invariable} generating set. 

\begin{Definition}\label{InvDef} Let $G$ be a group.\begin{enumerate}[(a)]
\item We say that a subset $\left\{x_{1},x_{2},\hdots,x_{d}\right\}$ of $G$ \emph{invariably generates} $G$ if $\left\{x_{1}^{g_{1}},x_{2}^{g_{2}},\hdots,x_{d}^{g_{d}}\right\}$ generates $G$ for every $d$-tuple $(g_{1},g_{2}\hdots,g_{d})\in G^{d}$.
\item Suppose that $G$ is finite. Define $\di(G)$ to be the smallest size of an invariable generating set for $G$.\end{enumerate} \end{Definition}

\subsection{Warnings}
There are a couple of warnings that should be pointed out here. Firstly, there exist infinite groups $G$ in which $\bigcup_{g\in G} H^g=G$ for some proper subgroup $H$ of $G$. In this case $G$ does not even have an invariable generating set, so $\di(G)$ is certainly not well-defined. Thus, the requirement that $G$ is finite in Definition \ref{InvDef} Part (b) really is necessary (of course one could also weaken this to ``finitely invariably generated"). 

Secondly, one may be tempted to study an even stronger generation property, by requiring that $\langle \left\{x_{1}^{g_{1}},x_{2}^{g_{2}},\hdots,x_{d}^{g_{d}}\right\}\rangle=G$ for every $d$-tuple $(g_{1},g_{2}\hdots,g_{d})\in \Aut(G)^{d}$. However, an arbitrary finite group $G$ may not even contain such a generating set. For example, when $G$ is elementary abelian of order $p^a$, for some prime $p$, then $\Aut(G)\cong GL_a(p)$ acts transitively on the non-identity elements of $G$. 

\subsection{History, motivation and main results} 
The notion of invariable generation was first discussed by B.L. van der Waerden in 1934 \cite{VDW}. Motivated by the problem of computing Galois groups, van der Waerden asked about probabilistic invariable generation in the case $G=\Sym(n)$. For more information about this direction see also \cite{Dix92}.

Suppose that a subset $X:=\{x_1,\hdots,x_d\}$ of the finite group $G$ fails to invariable generate $G$. Then $X$ is contained in the union $\bigcup_{g\in G}M^g$ of conjugates of a maximal subgroup $M$ of $G$. In other words, no element of $X$ acts fixed point freely in the action of $G$ on the set of (right) cosets of $M$ in $G$. Thus, the study of invariable generation is closely related to the theory of derangements in transitive permutation groups. This direction has seen a lot of recent attention, particularly in the case when the group in question is a non-abelian simple group: see \cite{LuPyb}, \cite{FG1}, \cite{FG2}, \cite{FG3}, \cite{FG4}, \cite{peres}, and \cite{EFG}. 

A more general analysis of invariable generation in finite groups was undertaken by Kantor, Lubotzky and Shalev in \cite{KLS}. This is, as far as we know, where the notation $\di(G)$ first appeared. Among many interesting results, they showed that while it is clear that $\di(G)\ge d(G)$, we have $\di(G)=d(G)$ when $G$ is nilpotent \cite[Proposition 2.4]{KLS}, but that $\di(G)-d(G)$ can be arbitrarily large in general \cite[Propostion 2.5]{KLS}. In fact, this is even true if we restrict to the case when $G$ is soluble (see \cite[Corollary 10]{DetLuc2}). Thus, if one fixes a class of finite groups $\mathcal{C}$, it is an interesting (and open) problem to determine if bounds on $d(G)$ carry over to comparable bounds on $\di(G)$. For instance, it has been proven independently in \cite{GurMalle} and \cite{KLS} that $\di(G)=d(G)=2$ when $G$ is a nonabelian finite simple group.

In this paper, we investigate the ``naturally occurring finite groups", namely the permutation and linear groups. A. Lucchini and E. Detomi (see Theorem \ref{DetLucHalfn}) have proved that the ``McIver-Neumann half $n$ bound", which states that \emph{$d(G)\le \frac{n}{2}$ whenever $G$ is a permutation group of degree $n$, and ($G$, $n$)$\neq$ ($S_3$, $3$)}, holds when one replaces $d$ by $\di$. Our first two main results deal with the case when $G$ is transitive and primitive, respectively.
\begin{Theorem}\label{TransInvTheorem} There exists an absolute constant $\ct$ such that $$d(G)\le \frac{\ct n}{\sqrt{\log{n}}}$$
whenever $G$ is a transitive permutation group of degree $n\ge 2$.\end{Theorem}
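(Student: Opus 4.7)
The plan is to reduce to the primitive case via an iterated block decomposition. Given a transitive $G \le \Sym(\Omega)$ of degree $n$, fix a non-trivial $G$-block system with blocks of size $m$ (to be optimized below) and $t = n/m$ blocks; then $G$ embeds in the wreath product $H \wr K$, where $H \le \Sym(m)$ is the primitive action induced on a block by its stabilizer and $K \le \Sym(t)$ is the induced transitive action on the block set. With $B := G \cap H^t$ the base subgroup, the short exact sequence $1 \to B \to G \to K \to 1$ is the structure through which invariable generators must be chosen.

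The central auxiliary result to prove is a wreath-style inequality of the form
\[ \di(G) \le \di(K) + g(H,t), \]
where $g(H,t)$ is an upper bound on the number of extra generators needed in the base once $K$ has been handled. Because $K$ permutes the $t$ coordinates transitively, a modest number of generators of $B$ together with their $K$-conjugates should invariably generate $B$. The maximal subgroups of $G$ that must be avoided split into blow-up, product-action and twisted-diagonal types, and each must be defeated simultaneously over all $G$-conjugates. Iterating the reduction through a chain of block systems, and invoking the companion primitive bound $\di(H) \le \cp \log m / \sqrt{\log\log m}$ (the primitive version announced in the abstract), yields an estimate that, for an appropriately chosen intermediate scale such as $m \approx 2^{\sqrt{\log n}}$, delivers the desired $O(n/\sqrt{\log n})$ bound; the Detomi--Lucchini $n/2$ estimate (Theorem~\ref{DetLucHalfn}) then disposes of the residual transitive quotient.

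The main obstacle is the wreath-product step itself: to improve on the naive $n/2$ bound one must extract genuine savings at each level of the decomposition, and this requires proving that invariable generation transfers efficiently through the extension $B \to G \to K$. The twisted-diagonal maximal subgroups are the delicate case, since they have no direct analog in ordinary generation and their conjugacy classes can behave unpredictably; controlling them will likely require a counting or probabilistic argument in the spirit of Kantor--Lubotzky--Shalev. A secondary technical point is the split between the ``small $m$'' regime (where Theorem~\ref{DetLucHalfn} alone suffices) and the ``large $m$'' regime (where the primitive bound $\cp \log m / \sqrt{\log\log m}$ provides the crucial saving); optimising the choice of $m$ across these regimes is what ultimately produces the $1/\sqrt{\log n}$ factor.
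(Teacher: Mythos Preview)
Your proposal has two substantive gaps. First, you plan to invoke the primitive bound $\di(H) \le \cp \log m / \sqrt{\log\log m}$ inside the proof of the transitive bound, but in this paper Theorem~\ref{PrimInvTheorem} is proved \emph{after} Theorem~\ref{TransInvTheorem} and relies on it (the affine, diagonal, product-action, and twisted-wreath cases of the O'Nan--Scott analysis each appeal to Theorem~\ref{TransInvTheorem}). So as written your argument is circular. In fact the primitive bound is not where the $1/\sqrt{\log n}$ saving originates: the paper needs only the crude Pyber estimate $a(R) \ll \log r$ for a primitive component $R$.

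Second, and more fundamentally, the wreath-product step is where the real content lies, and your sketch does not supply a bound $g(H,t)$ that yields any saving. The paper's method is to filter $G \cap B$ (with $B$ the base group of $R \wr S$) by a normal series whose abelian factors $M_i$ are submodules of induced modules $V_i\uparrow_H^G$ (Lemma~\ref{prechief}), and then to apply the module-theoretic estimate $d_G(M_i) \ll a_i\, s / \sqrt{\log s}$ from Section~\ref{MainModuleSection} together with $\di(G) \le \di(G/N) + d_G(N)$ for abelian $N$ (Lemma~\ref{diMin}). This gives $\di(G) \ll a(R)\,s/\sqrt{\log s} + \di(S)$ (Corollary~\ref{37}), after which a straightforward induction on the transitive quotient $S$ of degree $s=n/r$ and Lemma~\ref{UsefulBoundGar} finish. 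The $1/\sqrt{\log s}$ saving is therefore module-theoretic, coming from generator-number bounds for submodules of induced modules; it is not produced by optimising a block size $m$, which is in any case not a free parameter once you have taken a minimal block to make $R$ primitive. Your remarks about ``blow-up, product-action and twisted-diagonal'' maximal subgroups are also out of place here: that is the O'Nan--Scott taxonomy for primitive groups, and it plays no role in bounding the base-group contribution in the wreath-product reduction.
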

\begin{Theorem}\label{PrimInvTheorem} There exists an absolute constant $\cp$ such that $$d(G)\le \frac{\cp\log{n}}{\sqrt{\log{\log{n}}}}$$
whenever $G$ is a primitive permutation group of degree $n\ge 3$.\end{Theorem}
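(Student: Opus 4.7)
The plan is to apply the O'Nan--Scott classification of finite primitive permutation groups and handle the resulting types separately. Writing $N$ for the socle of $G$, the types split into the \emph{affine} case (where $N$ is elementary abelian) and the \emph{non-affine} cases (almost simple, diagonal, product, and twisted wreath, where $N \cong T^k$ for some nonabelian finite simple group $T$).

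In each non-affine case, standard structural bounds give $|T|^k \leq n^{c}$ for an absolute constant $c$, and $k = O(\log n)$. Combining $\di(T)=2$ from \cite{GurMalle,KLS} with invariable-generation estimates for direct products of simple groups, together with the embedding $G/N \hookrightarrow \Out(T)\wr\Sym(k)$, yields $\di(G) \leq c_1\log|T| + \di(K)$ for some transitive $K \leq \Sym(k)$. Applying Theorem \ref{TransInvTheorem} to $K$ gives the desired bound $\di(G) = O(\log n/\sqrt{\log\log n})$ in all non-affine cases.

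The heart of the proof is the affine case. Write $G = V \rtimes H$ with $V \cong \mathbb{F}_p^d$ and $H \leq GL_d(p)$ irreducible, so $n = p^d$. Since $V$ is abelian and $H$-irreducible, I claim $\di(G) \leq \di(H) + 1$. Indeed, given invariable generators $x_1,\ldots,x_r$ of $H$ and any nonzero $v \in V$, for any $g_0,g_1,\ldots,g_r \in G$ the conjugates $x_i^{g_i}$ project modulo $V$ to invariable generators of $H$, so the generated subgroup $K$ satisfies $KV=G$; moreover $v^{g_0}\in V\setminus\{0\}$ (as $V$ is abelian), and using $KV=G$ one realises every $H$-conjugate of $v^{g_0}$ inside $K$, which by irreducibility of $V$ exhausts all of $V$, giving $K=G$. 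It therefore suffices to prove $\di(H) = O(d/\sqrt{\log d})$ for irreducible $H \leq GL_d(p)$.

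This bound on $\di(H)$ is established by Aschbacher-type case analysis. In the Aschbacher-primitive subcase, $H$ is close to an almost simple or classical group and CFSG-based arguments already give $\di(H) = O(\log d)$, which is more than enough. In the imprimitive subcase, $V = V_1\oplus\cdots\oplus V_m$ with $H$ permuting the $V_i$ transitively, so $H \hookrightarrow GL_e(p)\wr K$ where $d=em$ and $K\leq\Sym(m)$ is transitive. Theorem \ref{TransInvTheorem} gives $\di(K) = O(m/\sqrt{\log m})$, while the paper's bound $\lfloor 3e/2\rfloor$ on invariable generation of completely reducible linear groups of dimension $e$ controls the base group up to the transitive action of $K$. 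Balancing $e\cdot m = d$ with $e \approx \sqrt{\log d}$ then yields $\di(H) = O(d/\sqrt{\log d})$. The main obstacle is rigorously establishing that a diagonal construction suffices for \emph{invariable} generation of the base group modulo the $K$-action: every $G$-conjugate must be controlled, not just a generic one. Overcoming this requires a careful adaptation of the covering arguments used in the proof of Theorem \ref{TransInvTheorem} to the setting of $K$-equivariant modules, and is the main technical ingredient of the proof.
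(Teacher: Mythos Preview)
Your overall strategy via O'Nan--Scott is the same as the paper's, and your reduction in the affine case to bounding $\di(H)$ for irreducible $H\le GL_d(p)$ is correct. However, the affine argument you sketch has a real gap.

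You claim $\di(H)=O(d/\sqrt{\log d})$ independent of $p$, and propose to obtain it by writing $H$ as a large subgroup of $R\wr K$ with $R\le GL_e(p)$ primitive, then using the bound $\di(R)\le\lfloor 3e/2\rfloor$ to ``control the base group up to the transitive action of $K$''. But the contribution of $H\cap R^m$ to $\di(H)$ is \emph{not} governed by $\di(R)$: the machinery of the paper (Lemma~\ref{prechief} and Corollary~\ref{chief}) feeds each abelian chief factor of $R$ separately into an induced-module bound, so what enters is the composition length $a(R)$, not $\di(R)$. For primitive $R\le GL_e(p)$ one has $a(R)\ll e\log p$ (Theorem~\ref{Pyb2}), and this $\log p$ factor is genuinely present --- it cannot be traded for $3e/2$ without a new lemma you have not stated. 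Moreover, your phrase ``balancing $e\cdot m=d$ with $e\approx\sqrt{\log d}$'' makes no sense: $e$ and $m$ are determined by $H$, not parameters you get to choose. What the paper actually proves (Proposition~\ref{AffineCase}) is the $p$-dependent bound $\di(H)\ll d\log p/\sqrt{\log(d\log p)}$, which is exactly $\log n/\sqrt{\log\log n}$ since $n=p^d$; this follows from Corollary~\ref{37} together with Lemma~\ref{UsefulBoundGar}.

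A smaller issue: in the non-affine cases your bound $\di(G)\le c_1\log|T|+\di(K)$ is unexplained --- the socle $T^k$ is a single chief factor and contributes $2$, not $\log|T|$, while the base-group part of $G/N\le\Out(T)\wr\Sym(k)$ needs its own argument. The paper handles the product-action case not through $G/\Soc(G)$ at all, but directly via the wreath decomposition $G\le R\wr S$ with $n=r^s$, using that $a_{ab}(R)\ll\log\log r$ for $R$ of almost-simple or diagonal type (Theorem~\ref{OutPermProp}); this is what makes Lemma~\ref{UsefulBoundGar} applicable.
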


When $\di$ is replaced by $d$, Theorem \ref{TransInvTheorem} is \cite[Theorem 1]{LucMenMor}, while Theorem \ref{PrimInvTheorem} is \cite[Theorem C]{LucMenMor2}.

We now move on to linear groups. For $n$ even, we follow \cite{derek} and denote by $B_{n}$ the completely reducible group $B_{n}:=3^{n/2}:2\le GL_{n}(2)$, such that $Z(B_{n})=1$, and $B_{n}\le GL_{2}(2)^{n/2}$ acts completely reducibly on a direct sum of $2$-dimensional submodules.

We also require the following definitions.
\begin{Definition}\label{ImprimitiveDef} Let $\mathbb{F}$ be a field. An irreducible subgroup $R$ of $GL_{n}(\mathbb{F})$ is called \emph{imprimitive} if the natural $R$-module $V\cong \mathbb{F}^n$ has a direct sum decomposition $V=W_{1}\oplus W_{2}\oplus\hdots\oplus W_{r}$, where $r>1$ and $R$ acts on the set $\left\{W_{1},W_{2},\hdots,W_{r}\right\}$. If no such decomposition exists, then $R$ is \emph{primitive}. If every normal subgroup of $R$ is homogeneous, then we say that $R$ is \emph{quasiprimitive}, while if every characteristic subgroup of $R$ is homogeneous then $R$ is said to be \emph{weakly quasiprimitive}.\end{Definition}

A primitive group $R\le GL_{n}(\mathbb{F})$ is both quasiprimitive and weakly quasiprimitive, since a decomposition of $V$ into homogeneous components for a normal or characteristic subgroup of $R$ would yield an imprimitivity decomposition for $V$ as given in Definition \ref{ImprimitiveDef}. The precise statement of our theorem can now be given as follows.  
\begin{Theorem}\label{CompRedTheorem} Let $\mathbb{F}$ be a field.\begin{enumerate}
\item Let $G\le GL_{n}(\mathbb{F})$ be finite and completely reducible. Then $\di(G)\le 3n/2$. Furthermore, if $|\mathbb{F}|=2$ then $\di(G)\le n/2$, unless $G\cong B_{n}$ as defined above, in which case we have $\di(G)=n/2+1$; or $G\cong Sp_{4}(2)\cong S_{6}$, in which case we have $\di(G)=3$. Also, if $|\mathbb{F}|=3$ then $\di(G)\le n$.
\item Let $R\le GL_{n}(\mathbb{F})$ be finite and weakly quasiprimitive. Also, let $Z=R\cap Z(GL_{n}(\mathbb{F}))$, and let $H$ be a subnormal subgroup of $R$. Then $\di(HZ/Z)\le 2\log{n}$, unless $R=H=Sp_{4}(2)\cong S_{6}$, in which case we have $\di(HZ/Z)=3$.\end{enumerate}\end{Theorem}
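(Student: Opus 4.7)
The natural order is to prove Part (2) first and then reduce Part (1) to it. For Part (2), the key is the tight structural restriction on a weakly quasiprimitive $R\le GL_n(\mathbb{F})$. I would analyse the generalized Fitting subgroup $F^*(R)$ using the standard dichotomy: modulo $Z$, the socle $\Soc(R/Z)$ is a direct product of isomorphic (nonabelian simple or elementary abelian) factors, and in the linear setting both the number of such factors and the orders of the quotients $R/F^*(R)$ are bounded by polynomial functions of $n$; in particular the chief length of $R/Z$ is $O(\log n)$. Since $H$ is subnormal in $R$, the image $HZ/Z$ inherits these bounds. The invariable generating set would then be built layerwise: using $\di(S)=2$ for nonabelian simple $S$, handling the abelian layers with the central Frattini-type arguments of \cite{KLS}, and packing them so that one invariable generator controls many consecutive chief factors at once. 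The exceptional case $R=H\cong Sp_{4}(2)\cong S_{6}$ is then excluded by direct inspection.

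For Part (1), I would first reduce to the homogeneous case by writing $V$ as the direct sum of its $G$-homogeneous components and applying a subdirect product argument in the spirit of \cite{KLS}. Each homogeneous summand has the form $G_{i}\le GL(W_{i})\wr \Sym(m_{i})$, and one then reduces to primitive pieces by induction on imprimitivity systems: if $G_{i}$ acts imprimitively on $m_{i}$ blocks, a wreath-product bookkeeping argument combines Theorem~\ref{TransInvTheorem} applied to the induced transitive action on the blocks with the inductive $\di$-bound on the block stabiliser quotient. The base case is a primitive action on some $W_{i}$, which is automatically weakly quasiprimitive, so Part (2) provides a $2\log(\dim W_{i})$ bound. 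Summing these estimates over homogeneous components — and using that $2\log\dim W + (\text{transitive degree bound})$ stays well below $3/2$ times the dimension — delivers the required $3n/2$ estimate.

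The main obstacle is pinning down the sharp constants in the $|\mathbb{F}|=2$ and $|\mathbb{F}|=3$ cases and identifying the extremal groups. The wreath-product recursion easily yields a linear bound with a mediocre constant, but to obtain $n/2$ in characteristic two and to isolate $B_{n}$ and $S_{6}$ as the \emph{only} obstructions forces a delicate analysis of the small-dimensional primitive and imprimitive subgroups of $GL_{n}(2)$ and $GL_{n}(3)$. One must compute $\di$ exactly for groups built on the $3^{n/2}{:}2$ pattern, verify that no other primitive linear group in these characteristics saturates the ratio, and handle the low-dimensional base cases ($n\le 4$ or so) by hand, probably using \cite{derek} and CFSG-dependent tables of primitive linear groups. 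Threading these sharpness checks through the inductive reduction, without losing the tight constant at each wreath-product step, is what I expect to be the hardest part of the argument.
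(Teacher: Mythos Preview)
Your high-level architecture---structure of $F^*(R)$ for Part~2, wreath-product reduction to primitive for Part~1, and extensive small-dimensional case analysis for sharp constants---is exactly what the paper does. Two points of divergence are worth flagging.

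First, the ordering ``prove Part~(2) first, then Part~(1)'' does not quite work as stated. The paper proves both parts \emph{simultaneously} by induction on $n$, and this is not cosmetic: in Part~(2), the layer $R/C_R(O_q(R))$ has shape $q^{2m}.X$ with $X\le Sp_{2m}(q)$ completely reducible, and bounding $\di$ of subnormal subgroups there (Lemma~\ref{Lemma4.1}) requires Part~(1) for $X$ in dimension $2m<n$. Your ``Frattini-type arguments of \cite{KLS}'' will not by themselves give the sharp $2m\log q$ needed; you really do need the completely-reducible bound from Part~(1) inside the induction.

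Second, for the permutation quotient $S$ in the imprimitive step of Part~(1), the paper uses Theorem~\ref{DetLucHalfn} (the exact $\di(S)\le s/2$ bound), not Theorem~\ref{TransInvTheorem}. Your proposal to invoke Theorem~\ref{TransInvTheorem} would give $\di(S)\ll s/\sqrt{\log s}$, which is asymptotically fine for the $3n/2$ statement but makes the threshold for the case analysis worse and, more importantly, is not tight enough to recover the $n/2$ bound over $\mathbb{F}_2$: there the paper's inequality is $\di(G)\le h(r)s + s/2$ with $h(r)\approx 2\log r$, and the explicit $s/2$ is what forces the residual case analysis down to $r\le 17$ rather than something unbounded. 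You correctly anticipate that this case analysis is the hardest part; it occupies the bulk of the paper's proof and does indeed lean on explicit tables (Proposition~\ref{83Cor}) and ad hoc subdirect arguments (Propositions~\ref{S3Prop}--\ref{Subd2Prop}) to isolate $B_n$ and $Sp_4(2)$.
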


For the case $|\mathbb{F}|=2$ in Part 1 of Theorem \ref{CompRedTheorem}, and the exceptional case $G\cong B_{n}$, compare Theorem \ref{DetLucHalfn} and its exceptional case, which, as mentioned above, is the corresponding result for permutation groups. 

If one replaces $\di$ by $d$, then the first statement in Part 1 of Theorem \ref{CompRedTheorem} is proved in \cite{KovRob}. The second half, together with Part 2, is proved in \cite[Theorem 1.2]{derek}.

\subsection{Strategy for the proofs and layout of the paper}\label{Strategy}
Let $G$ be a transitive permutation group [respectively irreducible linear group], of degree [resp. dimension] $n$, and assume that $G$ is imprimitive. Then $G$ may be embedded as a certain subgroup of a wreath product $R\wr S$, where $R$ is a primitive permutation [resp. linear] group of degree [resp. dimension] $r$, $S$ is a transitive permutation group of degree $s$, $rs=n$, and $G\pi=S$, where $\pi:R\wr S\rightarrow S$ denotes projection over the top group. Let $B\cong R^s$ be the base group of $R\wr S$. Then $G\cap B^s$ is ``built" from submodules of induced $G$-modules, and non-abelian $G$-chief factors (see Lemma \ref{prechief}). We will use this to bound the contribution of $G\cap B^s$ to the invariable generator number for $G$. Since $G=(G\cap B^s).(\frac{G}{G\cap B^s})$, we then need to bound $\di(G/G\cap B^s)=\di(S)$. This is done by induction to prove Theorem \ref{TransInvTheorem}. 

For Theorem \ref{PrimInvTheorem}, the affine case of the O'Nan-Scott Theorem is the most difficult to handle, and this requires upper bounds for $\di(G)$ for an irreducible linear group $G$. The approach described in the above paragraph then gives us what we need (here, we bound $\di(G/G\cap B^s)$ by using Theorem \ref{TransInvTheorem}, rather than induction). 

Theorem \ref{CompRedTheorem} Part 1 can easily be reduced to the irreducible case, and we again use the approach described above. For the remaining parts, we use results on the structure of a weakly quasiprimitive linear group from \cite{Asch1}, \cite{Asch2}, \cite{Gor} and \cite{LucMenMor2}.

The layout of the paper is as follows: in Section \ref{IntroSection} we first record some asymptotic results concerning the composition length of finite permutation and linear groups. With the proof strategy outlined above in mind, we will then discuss bounds on the size of a minimal generating set for a submodule of an induced module for a finite group. We close Section \ref{IntroSection} with a discussion of the structure of a weakly quasiprimitive linear group, as mentioned above. In Section \ref{WreathAppSection}, we partially generalise the module theoretic results from the introduction to certain subgroups of wreath products, while Section \ref{InvGenSection} consists of upper bounds for the function $\di$ on various classes of finite groups. Finally, we complete the proof of Theorem \ref{CompRedTheorem} in Section \ref{ProofSection1}, and Theorems \ref{TransInvTheorem} and \ref{PrimInvTheorem} in Section \ref{ProofSection2}.\\
\vspace{5mm}
\noindent {\bf Notation:} We will adopt the $\mathbb{ATLAS}$ \cite{Atlas} notation for group names, although we will usually write $\Sym(n)$ and $\Alt(n)$ for the symmetric and alternating groups of degree $n$. Furthermore, these groups, and their subgroups, act naturally on the set $\{1,\hdots,n\}$; we will make no further mention of this.

The centre of a group $G$ will be written as $Z(G)$, the Frattini subgroup as $\Phi(G)$, and the Fitting subgroup as $F(G)$. The letters $G$, $H$, $K$ and $L$ will usually be used for groups, while $V$ and $W$ will usually be modules. The letter $M$ will usually denote a submodule. 

Throughout, we will use the Vinogradov notation $A\ll B$, which means $A=O(B)$. Finally, ``$\log$" will always mean $\log$ to the base $2$.

\section{Preliminary results}\label{IntroSection}
The purpose of this paper is to study upper bounds for the function $\di$ on certain classes of finite permutation and linear groups. As mentioned in Section 1, the proofs in the most difficult cases essentially amount to using upper bounds on $\di(G)$ for subgroups $G$ of wreath products $R\wr S$. Our main strategy for doing this will be to reduce modulo the base group $B$ of $R\wr S$ and use either induction or previous results to bound $\di(G/G\cap B)$. In this way, all that remains is to investigate the contribution of $G\cap B$ to $\di(G)$.  

As we will show in Lemma \ref{prechief}, the group $G\cap B$ is built, as a normal subgroup of $G$, from submodules of induced modules for $G$, and non-abelian chief factors of $G$. The following lemma shows that, in the abelian case, it therefore suffices to study generator numbers for these submodules as $G$-modules, rather than their invariable generator numbers as groups themselves. More precisely, we have
\begin{Lemma}[{\bf \cite{DetLuc}, Lemma 2}]\label{diMin} Let $G$ be a group and let $N$ be a normal subgroup of $G$. Then\begin{enumerate}[(i)]
\item $\di(G)\le \di(G/N)+\di(N)$.
\item If $N$ is abelian, then $\di(G)\le \di(G/N)+d_G(N)$, where $d_G(N)$ denote the minimal number of generators required to generate $N$ as a $G$-module;
\end{enumerate}\end{Lemma}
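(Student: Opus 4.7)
The plan is to construct an invariable generating set for $G$ by lifting one from $G/N$ and combining it with an appropriate generating set for $N$. For (i), let $\{y_1,\ldots,y_s\}$ invariably generate $G/N$ with $s=\di(G/N)$, let $\{z_1,\ldots,z_t\}$ invariably generate $N$ with $t=\di(N)$, and let $x_i\in G$ be a lift of $y_i$. I claim that $X:=\{x_1,\ldots,x_s,z_1,\ldots,z_t\}$ invariably generates $G$, which gives (i). For (ii), I take instead $\{z_1,\ldots,z_t\}$ to be a minimal set of $G$-module generators for $N$, so $t=d_G(N)$, and exploit the abelian hypothesis.

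Fix arbitrary $g_i,h_j\in G$ and put $K:=\langle x_i^{g_i},\,z_j^{h_j}\,:\,i,j\rangle$. The image of $K$ in $G/N$ contains the conjugates $y_i^{\bar g_i}$, which by hypothesis generate $G/N$, so $KN=G$. The remaining task is to show $N\le K$. The main obstacle is a subtlety: $z_j^{h_j}$ is a $G$-conjugate of $z_j$ lying in $N$, but not necessarily an $N$-conjugate, so the hypothesis that $\{z_j\}$ invariably generates $N$ does not directly yield $\langle z_j^{h_j}\rangle=N$. Resolving this is where the identity $KN=G$ enters.

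For (i), fix $j$ and use $G=KN$ to write $h_j^{-1}=k_j n_j$ with $k_j\in K$, $n_j\in N$; then $h_j k_j=n_j^{-1}\in N$, so $m_j:=h_j k_j$ is an element of $N$ and $z_j^{m_j}=(z_j^{h_j})^{k_j}$ lies in $K$. Thus $K$ contains the $N$-conjugate $z_j^{m_j}$ of $z_j$ for every $j$, and invariable generation of $N$ applied to the tuple $(m_1,\ldots,m_t)\in N^t$ gives $N=\langle z_j^{m_j}:j\rangle\le K$, hence $K=G$. For (ii), the argument is cleaner: for any $g\in G$ write $g=kn$ with $k\in K$, $n\in N$, and compute
\[
(z_j^{h_j})^g \;=\; z_j^{h_j k n} \;=\; (z_j^{h_j k})^n \;=\; z_j^{h_j k} \;\in\; K,
\]
where the last equality uses that $z_j^{h_j k}\in N$ and $N$ is abelian. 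Hence the entire $G$-conjugacy class of $z_j^{h_j}$, which coincides with that of $z_j$, is contained in $K$. Therefore $K$ contains the $G$-submodule of $N$ generated by $z_1,\ldots,z_t$, which by choice of the $z_j$ is all of $N$. Combined with $KN=G$ this yields $K=G$, proving (ii).
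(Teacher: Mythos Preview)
The paper does not prove this lemma; it is quoted from \cite{DetLuc} without proof. Your argument is correct and is essentially the standard one: the key point in both parts is that once $KN=G$ is established, arbitrary $G$-conjugates of the $z_j$ can be replaced, modulo $K$, by conjugates of the required restricted type ($N$-conjugates in (i), arbitrary $G$-conjugates in (ii)), and these force $N\le K$.

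One minor remark on presentation in part (ii): you conclude that ``the entire $G$-conjugacy class of $z_j^{h_j}$ \ldots\ is contained in $K$'' and hence that $K$ contains the $G$-submodule generated by the $z_j$. Strictly, what you have shown is that $z_j^g\in K$ for every $g\in G$ and every $j$; the subgroup these elements generate is the normal closure $\langle z_1,\ldots,z_t\rangle^G$, which equals $N$ by the choice of the $z_j$. You do not need (and have not shown) that $K\cap N$ is itself a $G$-submodule, but your argument does not rely on that, so there is no gap.
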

Lemma \ref{diMin} also shows that the contribution to $\di(G)$ of the non-abelian building blocks from $G\cap B$ can be bounded above by $2c_{nonab}(R)$, where $c_{nonab}(R)$ denotes the number of non-abelian chief factors of $R$.

With the above in mind, the purpose of this section is two-fold: to investigate the number of ``building blocks" in $G\cap B$ (this will, in most cases, come down to investigating the chief length of the group $R$), and to investigate the contribution of each building block to $\di(G)$. We do this in Sections \ref{CompSection} and \ref{MainModuleSection} respectively. 

\subsection{Composition length and invariable generation in permutation and linear groups}\label{CompSection}
In this section we record numerous results concerning invariable generation and composition length in finite permutation and linear groups. We begin with composition length. 

\begin{Definition} Let $G$ be a group.\begin{enumerate}[(a)]
\item Write $a(G)$ to denote the composition length of $G$.
\item Let $\ab(G)$ and $\anab(G)$ denote the number of abelian and non-abelian composition factors of $G$, respectively. 
\item Let $\nab(G)$ denote the number of non-abelian chief factors of $G$.
\end{enumerate}\end{Definition}

The first result is stated slightly differently to how it is stated in \cite{Pyber}.
\begin{Theorem}[{\bf\cite{Pyber}, Theorem 2.10}]\label{Pyb} Let $R$ be a primitive permutation group of degree $r\geq 2$. Then $a(R)\ll\log{r}$. \end{Theorem}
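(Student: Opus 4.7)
The plan is to apply the O'Nan-Scott theorem, which partitions the primitive permutation groups of degree $r$ into affine, almost simple, simple diagonal, product, and twisted wreath types, and to bound $a(R)\ll\log r$ case by case. The bound in each case will follow by combining composition length estimates for the socle with estimates for the outer ``top'' factor. The heart of the matter is the affine case, where one must control the composition length of an irreducible linear group.

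In the affine case, $R=V\rtimes H$ with $V\cong \mathbb{F}_p^d$ of order $r=p^d$ and $H\le GL_d(p)$ irreducible. Since $V$ is a chief factor of $R$, we have $a(R)=d+a(H)$ while $\log r=d\log p$, so it suffices to prove $a(H)\ll d$. I would argue by descending through the imprimitivity lattice: if $H$ preserves $V=W_1\oplus\cdots\oplus W_k$ with $H$ transitive on the $W_i$, then $a(H)\le k\cdot a(H_1)+a(\Sym(k))$, where $H_1\le GL(W_1)$ is the group induced on $W_1$ by the block stabiliser. Iterating reduces to the case of a primitive linear group $H_1\le GL_e(p)$ for some $e\mid d$, and here the Aschbacher/P\'alfy--Wolf structure theory bounds $a(H_1)$ by a linear function of $e$. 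Combining the estimates over all layers yields $a(H)\ll d$, as required. For the almost simple case, $T:=\Soc(R)\le R\le\Aut(T)$ gives $a(R)\le 1+a(\Out(T))$; by the classification of finite simple groups, $a(\Out(T))=O(\log\log|T|)$, and Cameron's bound $|T|\le r^{c}$ for primitive almost simple groups (the alternating and natural symmetric socles are handled separately) yields $a(\Out(T))\ll \log\log r\ll\log r$.

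In the simple diagonal case, the socle is $T^k$ and $r=|T|^{k-1}$, so $a(R)\le k(1+a(\Out(T)))+a(\Sym(k))$, which is easily $O(\log r)$ using $\log r=(k-1)\log|T|$ and $a(\Sym(k))\le k-1$. The product and twisted wreath types decompose $R$ as an extension involving a smaller primitive group of degree $r_0$ with $r=r_0^k$, together with an outer-automorphism factor and a copy of $\Sym(k)$; induction on $r$ then finishes the argument. The main obstacle is the affine case, as it relies on the non-trivial bound $a(H)\ll d$ for irreducible $H\le GL_d(p)$; the other types reduce to essentially bookkeeping once the socle-top split is made.
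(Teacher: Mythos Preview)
The paper does not give its own proof of this statement; it is quoted directly from Pyber's 1993 survey (Theorem~2.10 there) and used as a black box. So there is no proof in the paper to compare your argument against.

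That said, your sketch follows the standard O'Nan--Scott route and is mostly correct, but there is a genuine error in the affine case. You write that it suffices to prove $a(H)\ll d$ for an irreducible subgroup $H\le GL_d(p)$, with the implied constant absolute. This is false: already for $d=1$ one may take $H=\mathbb{F}_p^{\times}$, which is cyclic of order $p-1$ and hence has composition length $\omega(p-1)$, unbounded as $p\to\infty$. More generally the centre of $GL_d(p)$ alone contributes $\omega(p-1)$ to the composition length. The correct bound is $a(H)\ll d\log p$; this is exactly Theorem~\ref{Pyb2} in the present paper (quoted from \cite{LucMenMor2}). Since $\log r = d\log p$, this weaker bound still gives $a(R)=d+a(H)\ll d\log p=\log r$, so the affine case is salvaged once you replace your claimed inequality by the correct one. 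Your descent through the imprimitivity lattice will not on its own yield $a(H)\ll d$, because at the bottom the primitive linear piece may have composition length growing with $\log p$ via its scalar subgroup.

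The remaining cases (almost simple, diagonal, product, twisted wreath) are handled correctly in outline, though in the diagonal case the cleaner bound is $a(R)\le k + a(\Out(T)) + a(\Sym(k))$ rather than $k(1+a(\Out(T)))+a(\Sym(k))$, since $R/\Soc(R)$ embeds in $\Out(T)\times\Sym(k)$ and not merely in $\Out(T)\wr\Sym(k)$; your overcount is harmless here.
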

\begin{Theorem}[{\bf\cite{LucMenMor2}, Proposition 9}]\label{Pyb2} Let $\mathbb{F}$ be a finite field, and let $R\le GL_r(\mathbb{F})$ be completely reducible. Then $a(R)\ll r\log{|\mathbb{F}|}$. \end{Theorem}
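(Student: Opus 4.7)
The plan is to reduce first to the irreducible case via complete reducibility, then proceed by induction on $r$, splitting the irreducible case into primitive and imprimitive subcases. Complete reducibility gives $V = \mathbb{F}^r = V_1 \oplus \cdots \oplus V_k$ as a direct sum of irreducible $R$-submodules with $\sum \dim V_i = r$. The restriction map $R \to \prod_{i=1}^k GL(V_i)$ is injective (its kernel fixes $V$ pointwise), and writing $R_i$ for the image in each factor, $R$ embeds into $R_1 \times \cdots \times R_k$, whose composition length is $\sum a(R_i)$. Each $R_i$ is an irreducible subgroup of $GL_{\dim V_i}(\mathbb{F})$, so the problem reduces to bounding $a(R) \ll r\log|\mathbb{F}|$ for irreducible $R$.

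For the irreducible case, I would induct on $r$. The base case $r = 1$ is immediate: $R \le \mathbb{F}^\ast$ is cyclic of order at most $|\mathbb{F}|-1$, so $a(R) \le \log|\mathbb{F}|$. For $r > 1$, split by primitivity. If $R$ is imprimitive, a minimal $R$-invariant direct-sum decomposition $V = W_1 \oplus \cdots \oplus W_s$ with $s > 1$ gives an embedding $R \hookrightarrow R_0 \wr S$, where $R_0 \le GL_{r_0}(\mathbb{F})$ is irreducible of dimension $r_0 = r/s$ and $S \le \Sym(s)$ is transitive. Writing $B = R_0^s$ for the base group,
$$a(R) \le a(R \cap B) + a(R/(R \cap B)) \le s\cdot a(R_0) + a(S).$$
The inductive hypothesis gives $a(R_0) \ll r_0\log|\mathbb{F}|$, so $s\cdot a(R_0) = O(r\log|\mathbb{F}|)$. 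For $a(S)$, I would invoke the known linear bound $a(S) \ll s$ for transitive permutation groups of degree $s$ (obtainable, for instance, by iterating Theorem~\ref{Pyb} along a primitive wreath tower for $S$); since $s \le r$ and $\log|\mathbb{F}| \ge 1$, this term is also $O(r\log|\mathbb{F}|)$. Rather than try to preserve a fixed implicit constant across nested induction, I would unwind the recursion all the way to a tower $R \le R^\ast \wr (S_1 \wr \cdots \wr S_t)$ with $R^\ast$ primitive linear and each $S_i$ transitive, and check that the resulting telescoping sum of contributions is manifestly $O(r\log|\mathbb{F}|)$.

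The remaining (base) case is that of a primitive $R \le GL_r(\mathbb{F})$. Here I would invoke the classical structural theory of primitive linear groups---Suprunenko's framework, Wolf's bounds in the soluble case, and Aschbacher's geometric subgroup classes together with bounds on almost-simple sections in the nonsoluble case---to produce an estimate $|R| \le c^{r\log|\mathbb{F}|}$ for an absolute constant $c$, and hence $a(R) \le \log_2|R| \ll r\log|\mathbb{F}|$.

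The main obstacle is the primitive case: it is the one step that requires nontrivial input beyond Clifford theory and elementary group theory, ultimately resting on the classification of finite simple groups. Everything else---the reduction from completely reducible to irreducible, and the imprimitive step---is a routine wreath-product argument. Since the theorem is stated here as a citation of \cite[Proposition 9]{LucMenMor2}, the intended route is really just to quote theirs; the outline above captures the shape of the proof.
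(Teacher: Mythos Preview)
The paper does not give its own proof of this statement; it is simply recorded as a citation of \cite[Proposition 9]{LucMenMor2}, with no argument supplied. You have correctly identified this yourself in your final paragraph, and the outline you give---reduce to the irreducible case by complete reducibility, then handle imprimitive groups via a wreath-product induction and primitive groups via structural bounds---is a faithful sketch of how the cited result is actually proved in \cite{LucMenMor2}.
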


We now consider permutation representations of finite simple groups.
\begin{Theorem}[{\bf\cite{Nina}, Lemma 2.6}]\label{OutPermProp} Let $T$ be a non-abelian finite simple group, and suppose that $T$ is contained in $\Sym(n)$, with $n\geq 2$. Then $|\Out(T)|\ll \log{n}$.\end{Theorem}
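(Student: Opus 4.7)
My plan is to apply the classification of finite simple groups (CFSG). Since $T\le \Sym(n)$ implies $n\ge \mu(T)$, where $\mu(T)$ denotes the minimal degree of a faithful permutation representation of $T$, it suffices to establish $|\Out(T)|\ll \log \mu(T)$.

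For $T$ alternating or sporadic the claim is essentially immediate. If $T=\Alt(m)$ with $m\ge 5$ then $\mu(T)=m$ (with $\mu(\Alt(6))=6$) while $|\Out(T)|\le 4$; and for sporadic $T$ there are finitely many possibilities with $|\Out(T)|\le 2$. In all these cases $|\Out(T)|$ is bounded by an absolute constant and $\log \mu(T)\ge \log 5$, so the inequality holds for any sufficiently large absolute constant.

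The main case is that of Lie type. Let $T$ be a simple group of Lie type defined over $\mathbb{F}_q$, of untwisted Lie rank $\ell$. The structure of $\Out(T)$ yields a factorisation $|\Out(T)|=d\cdot f\cdot g$, where $d=|\mathrm{Outdiag}(T)|$, $f=\log_{p}q$ when $q=p^{f}$, and $g\in\{1,2,6\}$ accounts for graph automorphisms. One has $d\ll \ell+1$: for classical $T$ of natural dimension $m\sim \ell$, $d\le m$; for exceptional $T$, $d$ is an absolute constant. On the other hand, the lower bounds of Cooperstein and of Kleidman--Liebeck on the minimal index of a proper subgroup of $T$ give $\mu(T)\ge c_1 q^{c_2 \ell}$ for absolute constants $c_1,c_2>0$, outside a small list of low-rank exceptions which can be checked directly (e.g.\ $PSL_2(q)$, where $\mu(T)=q+1$ and $|\Out(T)|\le 2\log q$). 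Combining these estimates,
\[
|\Out(T)|\ll (\ell+1)\log q\ll \log\bigl(q^{\ell+1}\bigr)\ll \log \mu(T),
\]
as needed.

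The main obstacle is the uniform verification of the Lie type estimate across all families, particularly the low-rank or small-field exceptions such as $PSL_{2}(q)$, $PSU_{3}(q)$, $Sz(q)$, ${}^{2}G_{2}(q)$, and the handful of small exceptional groups. Each of these is dispatched individually using the tabulated minimal permutation degrees available in the standard references; once they are absorbed into the implicit constant, the bound above gives the desired conclusion.
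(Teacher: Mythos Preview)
The paper does not supply its own proof of this statement: it is quoted verbatim as \cite[Lemma 2.6]{Nina} and used as a black box. So there is no argument in the paper to compare against.

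Your sketch is the standard route to such a bound and is essentially correct. The reduction to $|\Out(T)|\ll \log\mu(T)$ is the right move, the alternating and sporadic cases are trivial, and for Lie type the factorisation $|\Out(T)|=dfg$ together with $d\ll \ell$ and $\mu(T)\gg q^{c\ell}$ does give $|\Out(T)|\ll \ell\log q\ll \log\mu(T)$. One small point worth tightening: for twisted groups (e.g.\ ${}^2B_2$, ${}^2G_2$, ${}^2F_4$, ${}^3D_4$) the decomposition of $\Out(T)$ and the parameter $f$ need to be interpreted with a little care, and the graph automorphism factor $g$ can also be $3$ (triality for $D_4$), not just $1$, $2$, or $6$; but all of this is absorbed into the implied constant. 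Your list of low-rank exceptions is the right one to flag, and the tabulated minimal degrees in Kleidman--Liebeck or the Atlas handle them. This is almost certainly the shape of the argument in the cited reference as well.
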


Finally, we record a result of Cameron, Solomon and Turull concerning the composition length of a finite permutation group. Note that we only give a simplified version of their result here. 
\begin{Theorem}[{\bf\cite{Cam}, Theorem 1}]\label{Cam} Let $G$ be a permutation group of degree $n\ge 2$. Then $a{(G)}\ll n$.\end{Theorem}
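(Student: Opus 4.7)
The plan is to prove, by strong induction on $n$, the stronger statement that $f(n) := \max\{a(G) : G \le \Sym(n)\} \le C(n-1)$ for some absolute constant $C$; this implies $a(G) \ll n$. The inductive step splits into three cases according to whether $G$ is intransitive, transitive and primitive, or transitive and imprimitive.

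If $G$ is intransitive with orbits $O_1, \ldots, O_k$ (where $k \ge 2$) of sizes $n_i$, I would take the filtration $G = K_0 \ge K_1 \ge \cdots \ge K_k = 1$, where $K_i$ is the kernel of the action on $O_1 \cup \cdots \cup O_i$. Each factor $K_{i-1}/K_i$ embeds in the transitive constituent $G^{O_i} \le \Sym(n_i)$, so by induction $a(K_{i-1}/K_i) \le C(n_i - 1)$, and summing yields $a(G) \le C(n - k) \le C(n - 1)$. If $G$ is transitive and primitive, then Theorem \ref{Pyb} gives $a(G) \ll \log n \le C(n-1)$, taking $C$ large enough (and verifying small $n$ by inspection, using $\log n \le n-1$ for $n \ge 2$).

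For the transitive imprimitive case, I would fix a minimal non-trivial block system with $s$ blocks of size $r$ (so $1 < r, s < n$ and $rs = n$), chosen so that the induced action $R \le \Sym(r)$ of a block stabilizer on its block is primitive. Then $G$ embeds into the wreath product $R \wr S$, where $S \le \Sym(s)$ is the image of $G$ under the action on blocks. Let $K := G \cap R^s$ be the kernel of the action on blocks. Then $G/K$ is transitive of degree $s$, so $a(G/K) \le f(s) \le C(s - 1)$ by induction. To bound $a(K)$, I would use the coordinate filtration $L_i := K \cap (R^i \times \{1\}^{s - i})$; each quotient $L_i / L_{i - 1}$ embeds as a subgroup of the $i$-th copy of $R$, hence is a permutation group of degree $r$, so $a(L_i/L_{i-1}) \le f(r) \le C(r-1)$ by induction. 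Summing over $i$ gives $a(K) \le s C(r - 1) = C(n - s)$, and finally $a(G) \le a(K) + a(G/K) \le C(n - s) + C(s - 1) = C(n - 1)$, closing the induction.

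The main obstacle lies in choosing the right inductive hypothesis. A naive $f(n) \le Cn$ would not close, since the transitive imprimitive case leaves an unavoidable error term of order $Cs$ arising from $G/K$. The strengthened hypothesis $f(n) \le C(n - 1)$ absorbs this exactly: the coordinate filtration of $K$ produces a saving of $Cs$ over the naive bound $Csr = Cn$, which cancels the $C(s - 1)$ from $a(G/K)$ up to a single $C$. The essential analytic input throughout is Theorem \ref{Pyb} for the primitive base case, which ensures that the constant $C$ in the inductive hypothesis can be chosen absolutely, independent of $n$. Note that the argument does not attempt to exploit primitivity of $R$ inside the subgroups appearing in the coordinate filtration (which would be problematic, since subgroups of primitive groups can have much larger composition length than the ambient group); instead, all the slack is absorbed by the $-1$ in $C(n-1)$.
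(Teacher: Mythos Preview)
The paper does not prove this theorem; it simply quotes it as Theorem~1 of \cite{Cam} (Cameron--Solomon--Turull), so there is no ``paper's own proof'' to compare against. Your argument is correct and is the standard soft reduction: split into orbits, then into blocks, and invoke Theorem~\ref{Pyb} for the primitive base case. The bookkeeping with the strengthened hypothesis $f(n)\le C(n-1)$ is exactly what is needed to make the wreath-product step close without an error term, and your filtration of $K$ by coordinate projections is the right way to bound $a(K)$.

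Two minor remarks. First, you never actually use the primitivity of $R$ in the imprimitive step: you bound $a(L_i/L_{i-1})\le f(r)$ by the inductive hypothesis, not by Theorem~\ref{Pyb}, so any nontrivial block system would do and the sentence about choosing a minimal block can be dropped. Second, in the intransitive case orbits of size $1$ fall outside the range $n_i\ge 2$ of the inductive hypothesis, but this is harmless since the corresponding quotient is trivial and $C(1-1)=0$; it is worth a half-sentence. For context, the original Cameron--Solomon--Turull paper proves something much sharper --- an exact formula for the length of the longest subgroup chain in $\Sym(n)$ --- from which $a(G)\ll n$ is immediate; your argument is the natural asymptotic shortcut that trades their detailed analysis for the black box of Theorem~\ref{Pyb}.
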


We now turn to invariable generation of certain classes of finite groups. We begin with the following general result.
\begin{Theorem}[{\bf \cite{KLS}, Theorem 3.1}]\label{MinTheoremInv} Let $G$ be a finite group, and let $M$ be a minimal normal subgroup of $G$. Then $\di(G)\le \di(G/M)+\delta$, where $\delta:=1$ if $M$ is abelian and $\delta:=2$ if $M$ is nonabelian. In particular, $\di(G)\le \ab{(G)}+2\nab{(G)}\le a(G)$. \end{Theorem}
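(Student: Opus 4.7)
My plan is to prove the first inequality $\di(G)\le\di(G/M)+\delta$ directly, via a standard lifting argument. Fix an invariable generating set $\{\bar y_1,\ldots,\bar y_d\}$ for $G/M$ of minimal size $d=\di(G/M)$, and lift to elements $y_1,\ldots,y_d\in G$. For any conjugators $g_i\in G$, the subgroup $H_0:=\langle y_1^{g_1},\ldots,y_d^{g_d}\rangle$ projects onto $G/M$, so $H_0M=G$. Adding $\delta$ further generators $z_j$ and forming $H:=\langle H_0,z_1^{h_1},\ldots,z_\delta^{h_\delta}\rangle$ preserves $HM=G$, so the task reduces to picking the $z_j$ to force $H\cap M=M$ (hence $H=G$) for every choice of $g_i,h_j\in G$.

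For the abelian case ($\delta=1$) I would take $z_1$ to be any non-identity element of $M$. Then $z_1^{h_1}\in H\cap M$ is non-trivial, and since $M$ is abelian, $H\cap M$ is normalised by both $M$ (trivially) and $H$, hence by $HM=G$. Minimality of $M$ then forces $H\cap M=M$.

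The nonabelian case ($\delta=2$) is the main obstacle, because the abelian trick fails: $H\cap M$ need not be normalised by $M$. Here $M\cong T^k$ for some finite nonabelian simple group $T$. I would use $\di(T)=2$ (the Kantor--Lubotzky--Shalev / Guralnick--Malle result) to pick $w_1,w_2\in T$ invariably generating $T$, and set $z_j\in M$ to have $w_j$ in the first simple factor $T_1$ and the identity elsewhere. The crux is to verify that for arbitrary $h_1,h_2\in G$ the $H$-normal closure of $\{z_1^{h_1},z_2^{h_2}\}$ equals $M$. Using $HM=G$, the $H$-action on $M$ by conjugation agrees with the $G$-action up to inner automorphisms supplied by $H\cap M$, so $H$ acts transitively on the simple factors $\{T_1,\ldots,T_k\}$; transitivity plus the invariable generation of $T$ by $w_1,w_2$ yields, factor by factor, surjective projection of $\langle z_1^{h_1},z_2^{h_2}\rangle^H$ onto every $T_i$. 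A Scott-type lemma on $G$-invariant subdirect subgroups of $T^k$ then upgrades this to equality with $M$; I expect the trickiest point to be ruling out proper diagonal subgroups, for which having both $z_1,z_2$ available (and invariably generating $T$) is essential.

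Iterating $\di(G)\le\di(G/M)+\delta$ along a chief series of $G$ finally gives a bound in which each abelian chief factor contributes $1$ and each non-abelian chief factor contributes $2$; since the number of abelian chief factors of $G$ is at most $\ab(G)$, one obtains $\di(G)\le\ab(G)+2\nab(G)$, which is the stated inequality.
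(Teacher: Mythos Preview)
The paper does not prove this statement; it is quoted from \cite{KLS} without argument. Your abelian case and the final iteration along a chief series are correct.

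The nonabelian case has a genuine gap. You invoke $\di(T)=2$ to obtain $w_1,w_2\in T$ with $\langle w_1^{g_1},w_2^{g_2}\rangle=T$ for all $g_1,g_2\in T$, but the adversary's conjugators $h_1,h_2$ lie in $G$, not in $M$, and conjugation by elements of $G$ can induce \emph{outer} automorphisms on the simple factors of $M\cong T^k$. The cleanest test case is $k=1$, $M=T$, $G=\Aut(T)$: then $z_j^{h_j}=w_j^{\alpha_j}$ with $\alpha_j$ ranging over all of $\Aut(T)$, so what is actually required is $\langle w_1^{\alpha_1},w_2^{\alpha_2}\rangle=T$ for every $\alpha_1,\alpha_2\in\Aut(T)$, which is strictly stronger than $\di(T)=2$. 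The correct input is Theorem~\ref{KLSSimple} (that is, \cite[Theorem~5.1]{KLS}): it furnishes such an $\Aut(T)$-invariable pair inside $T$ for every nonabelian simple $T$ except $P\Omega_8^+(2)$ and $P\Omega_8^+(3)$, and those two exceptions require a separate treatment in which one of the two added generators is chosen outside $M$. Your observation that the $H$- and $G$-actions on $M$ differ only by $\Inn(M)$ is correct but does not close the gap, since it is the adversary, not $H$, who applies the initial (possibly outer) twist to $z_1$ and $z_2$.

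A side remark: the trailing ``$\le a(G)$'' in the displayed statement fails as written (for $G$ nonabelian simple one has $\di(G)=2>1=a(G)$); you were right to stop at $\di(G)\le\ab(G)+2\nab(G)$.
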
 

Next, we note the theorem of Lucchini and Detomi mentioned in Section 1.
\begin{Theorem}[{\bf \cite{DetLuc}, Theorem 1}]\label{DetLucHalfn} Let $G$ be a subgroup of $\Sym{(n)}$. Then $\di(G)\le \left\lfloor \frac{n}{2}\right\rfloor$, except when $n=3$ and $G\cong \Sym{(3)}$.\end{Theorem}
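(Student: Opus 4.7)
The plan is to proceed by induction on $n$, verifying small cases ($n \le 3$) directly. Note that $\di(\Sym(3)) = 2$, since no single element of $\Sym(3)$ generates it, so the exception is genuine and must be tracked through the induction.

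First reduce to the transitive case. If $G \le \Sym(n)$ is intransitive with orbits $\Omega_1, \ldots, \Omega_k$ (with $k \ge 2$), then $G$ embeds as a subdirect product of the orbit restrictions $G_i := G^{\Omega_i} \le \Sym(|\Omega_i|)$. Iterating Lemma \ref{diMin}(i) yields $\di(G) \le \sum_i \di(G_i)$, and by induction each $\di(G_i) \le \lfloor |\Omega_i|/2 \rfloor$, with $\Sym(3)$ the sole exceptional summand. A short case analysis shows that the resulting sum never exceeds $\lfloor n/2 \rfloor$ except in degenerate configurations (essentially, a single $\Sym(3)$-orbit accompanied only by fixed points), which are handled by hand.

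For transitive $G$, I would split into primitive and imprimitive subcases. In the primitive case apply the O'Nan-Scott theorem. In the affine case $G = V \rtimes G_0$ with $|V| = n = p^d$ and $G_0 \le GL(d,p)$ irreducible, Lemma \ref{diMin}(ii) gives $\di(G) \le \di(G_0) + d_{G_0}(V) = \di(G_0) + 1$, and $\di(G_0) \le a(G_0) \ll d \log p$ by Theorems \ref{MinTheoremInv} and \ref{Pyb2}, which is well below $\lfloor p^d/2 \rfloor$ once $p^d \ge 4$. In the almost simple case, $\di(T) = 2$ for the non-abelian simple socle $T$ (by \cite{GurMalle} and \cite{KLS}), so Lemma \ref{diMin}(i) together with Theorem \ref{OutPermProp} yields $\di(G) \le 2 + \di(\Out(T)) \ll \log \log n$. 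The remaining diagonal, product-action, and twisted wreath types reduce via socle structure to an almost simple argument on a composition factor, again giving a bound of order $\log n$, far below $n/2$. In the imprimitive case with a minimal block system $\mathcal{B}$ of block size $b$ with $1 < b < n$, let $K$ be the kernel of the action on $\mathcal{B}$; then $G/K$ is a transitive subgroup of $\Sym(n/b)$, so $\di(G/K) \le \lfloor n/(2b) \rfloor$ by induction, and one must control the contribution of $K$, which is a subdirect product of transitive subgroups of $\Sym(b)$ indexed by the blocks, via Lemma \ref{diMin}(i).

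The main obstacle is the imprimitive transitive case: a crude application of Lemma \ref{diMin} to the embedding $G \hookrightarrow \Sym(b) \wr \Sym(n/b)$ produces a bound of order $(n/b)\lfloor b/2\rfloor + \lfloor n/(2b)\rfloor$, which is strictly larger than $\lfloor n/2 \rfloor$. The required savings come from exploiting the subdirect structure of $K$ inside $(G^\Delta)^{n/b}$ for $\Delta \in \mathcal{B}$, using that the transitive action of $G/K$ on $\mathcal{B}$ permits single lifts of generators to cover many block-components at once, and that a subdirect product of $m$ copies of a group of invariable generator number $k$ can be invariably generated with far fewer than $mk$ elements. Managing the $\Sym(3)$ exception coherently through the induction is also delicate, as the $\lfloor n/2 \rfloor$ bound is tight precisely at the interface with this exception, so the inductive hypothesis must track when the exception can propagate into larger wreath products or subdirect products.
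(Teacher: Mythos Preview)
First, note that the paper does not actually prove this theorem: it is quoted from \cite{DetLuc} and used as a black box throughout, so there is no proof in the present paper to compare your proposal against.

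As for the proposal itself, the genuine gap is precisely where you flag it. In the imprimitive case your crude wreath-product bound is
\[
\frac{n}{b}\left\lfloor \frac{b}{2}\right\rfloor + \left\lfloor \frac{n}{2b}\right\rfloor,
\]
which already exceeds $\lfloor n/2\rfloor$ for $b=2$. You then assert that the savings come from the subdirect structure of $K$ inside $(G^{\Delta})^{n/b}$ and the claim that ``a subdirect product of $m$ copies of a group of invariable generator number $k$ can be invariably generated with far fewer than $mk$ elements''. But that last sentence is not an argument: it is a restatement of what must be proved, and in fact is false at the level of generality you state it (e.g.\ a direct product of $m$ copies of $S_3$ can have $\di$ as large as $m+1$, as Proposition~\ref{S3Prop} shows, which is not ``far fewer'' than $2m$ when you need $\lfloor 3m/2\rfloor$). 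The actual mechanism in \cite{DetLuc} is to lift an invariable generating set for $G/K$ to elements of $G$ and \emph{simultaneously} arrange their projections onto the block components so that a single lift does double duty; this requires a careful accounting of how invariable generators in the quotient can be chosen with prescribed block-projections, and is the real content of the proof. Your proposal does not supply this step.

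A secondary issue: your primitive analysis gives only asymptotic bounds ($\ll$), so it does not yield $\di(G)\le\lfloor n/2\rfloor$ for small $n$. You would need to handle all primitive groups of bounded degree by direct computation, and the threshold is not obviously tiny (for instance, the affine bound $\di(G_0)\le a(G_0)$ with $a(G_0)\ll d\log p$ only beats $p^d/2$ once $p^d$ is moderately large).
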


We also have the bound for simple groups, which was also mentioned previously. 
\begin{Theorem}[{\bf\cite{KLS}, Theorem 5.1} and {\bf\cite{GurMalle}, Theorem 1.3}]\label{SimpleTheorem} Let $T$ be a non-abelian finite simple group. Then $\di{(T)}\le 2$.\end{Theorem}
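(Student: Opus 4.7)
The plan is to apply the Classification of Finite Simple Groups and, for each family, exhibit a pair of conjugacy classes $(C_1, C_2)$ in $T$ with the property that no maximal subgroup $M$ of $T$ meets both $C_1$ and $C_2$. Picking any $x_1 \in C_1$ and $x_2 \in C_2$ then gives an invariable generating set: if $\langle x_1^{g_1}, x_2^{g_2}\rangle$ were proper, it would lie in a maximal $M$, forcing $C_i \cap M \neq \emptyset$ for $i=1,2$, a contradiction. So the entire content of the theorem is the existence of such a pair of classes in each $T$.

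For $T = \Alt(n)$, I would choose elements with carefully selected cycle types: one of them a long cycle (an $n$-cycle or $(n-1)$-cycle, chosen of order coprime to suitable things), which kills the intransitive and imprimitive maximal subgroups by transitivity/primitivity; the second a product of cycles whose order involves a Zsigmondy-type prime $p$ such that $p$ exceeds the Bochert/Praeger--Saxl bound for the degree of a primitive proper subgroup of $\Sym(n)$, thereby killing the primitive maximal subgroups. For sporadic $T$, I would verify the result directly from the $\mathbb{ATLAS}$: for each sporadic, the list of maximal subgroups and their conjugacy class fusion is known, so one reads off two classes avoided conjointly (typically two classes of large prime order).

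For $T$ of Lie type in characteristic $p$ over $\mathbb{F}_q$, I would use Aschbacher's classification of maximal subgroups of the classical groups (and the analogous theorems for exceptional groups) to split into geometric classes $\mathcal{C}_1, \ldots, \mathcal{C}_8$ and the almost-simple class $\mathcal{S}$. Pick $x_1$ to be a regular semisimple element whose order is divisible by a primitive prime divisor (Zsigmondy prime) of $q^n - \varepsilon$, so that $\langle x_1\rangle$ lies in a unique maximal torus and $x_1$ is forced to lie only in subgroups from a very short list (essentially stabilizers of a natural decomposition, handled by $\mathcal{C}_3$-type normalisers, together with a few $\mathcal{S}$-members, which can be bounded by order considerations); pick $x_2$ of a different order that is blocked from the remaining listed subgroups by an incompatible order argument.

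The main obstacle is the Lie type case, and within it, two difficulties: first, the class $\mathcal{S}$ of irreducible almost-simple maximal subgroups is not uniformly controlled, and ruling it out requires order or representation-theoretic bounds (and in small rank may require direct inspection); second, the small-rank classical groups (such as $\mathrm{PSL}_2(q)$, $\mathrm{PSL}_3(q)$, $\mathrm{PSU}_3(q)$ and $\mathrm{PSp}_4(q)$) and small exceptional cases have atypical maximal subgroup lattices and must be treated by hand. These base cases are where the argument shifts from uniform ``Zsigmondy plus Aschbacher class'' bookkeeping to explicit verification, and they carry essentially all the technical weight of the proof.
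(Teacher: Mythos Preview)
The paper does not give its own proof of this statement: it is quoted as a known result from \cite{KLS} and \cite{GurMalle}, with no argument supplied. So there is nothing in the paper to compare your proposal against directly.

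That said, your outline is essentially the approach taken in those cited references. The reduction to finding two conjugacy classes $C_1$, $C_2$ such that no maximal subgroup of $T$ meets both is exactly right, and the case split via the Classification of Finite Simple Groups, with Zsigmondy primes and Aschbacher's theorem handling the Lie type case, is the standard route. You have also correctly identified where the real work lies: controlling the class $\mathcal{S}$ of almost-simple irreducible maximal subgroups, and dealing with the small-rank and low-$q$ exceptions where Zsigmondy primes fail or the maximal subgroup lattice is irregular. One caveat: your sketch is a high-level strategy rather than a proof, and the exceptional cases (both the Zsigmondy exceptions and the small Lie type groups) require substantial case-by-case work that you have not carried out; in the cited papers this is where most of the length goes. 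If you were asked to supply a proof here, the honest answer is that this is a deep result whose full verification occupies a significant portion of \cite{KLS} and \cite{GurMalle}, and the paper is right to cite it rather than reprove it.
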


Since the outer automorphism group of a nonabelian finite simple group is either isomorphic to a subgroup of $\Sym(4)\times C_f$, or is an extension of at most three cyclic groups, the next corollary follows immediately from Theorem \ref{SimpleTheorem}.
\begin{Corollary}\label{KLSSimpleCorOut} Let $T$ be a non-abelian finite simple group, and let $H\le \Out(T)$. Then $\di(H)\le 3$. In particular, if $T\le G\le \Aut(T)$, then $\di(G)\le 5$.\end{Corollary}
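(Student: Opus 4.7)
The plan is to exploit the two-case structural dichotomy for $\Out(T)$ stated immediately before the corollary, together with the additivity of $\di$ in short exact sequences provided by Lemma~\ref{diMin}(i). In each case the goal is to exhibit a short normal series for $H\le \Out(T)$ whose successive quotients all have invariable generator number at most $2$, summing to at most $3$.

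First I would handle the case $H\le \Sym(4)\times C_f$ by setting $K=H\cap (\Sym(4)\times 1)$, which is normal in $H$, observing that $H/K$ embeds in $C_f$ and so has $\di\le 1$, and applying Theorem~\ref{DetLucHalfn} with $n=4$ (the $n=3$ exceptional case does not arise here) to conclude $\di(K)\le 2$; Lemma~\ref{diMin}(i) then gives $\di(H)\le 3$. In the remaining case, where $\Out(T)=N_3\trianglerighteq N_2\trianglerighteq N_1\trianglerighteq N_0=1$ with each $N_{i+1}/N_i$ cyclic, I would intersect $H$ with the $N_i$ to obtain a three-term normal series for $H$ whose successive quotients each embed in a cyclic group, hence have $\di\le 1$; iterating Lemma~\ref{diMin}(i) three times yields $\di(H)\le 3$ once more.

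For the ``in particular'' statement, if $T\le G\le \Aut(T)$ then $G/T$ embeds in $\Out(T)$, so $\di(G/T)\le 3$ by the first part; combining this with $\di(T)\le 2$ from Theorem~\ref{SimpleTheorem} via one more application of Lemma~\ref{diMin}(i) to the normal subgroup $T\trianglelefteq G$ gives $\di(G)\le 5$. The only non-routine ingredient is the structural dichotomy for $\Out(T)$ itself, which is classical information from the classification of finite simple groups: for $T$ alternating, sporadic, or of Lie type without triality, $\Out(T)$ is an extension of diagonal, field and graph subgroups (each cyclic), whereas triality (i.e.\ $T=P\Omega_8^+(q)$) is exactly what produces the $\Sym(4)\times C_f$ shape.
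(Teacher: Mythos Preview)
Your proposal is correct and follows essentially the same approach as the paper. The paper's proof is just the single sentence preceding the corollary (the structural dichotomy for $\Out(T)$) together with Theorem~\ref{SimpleTheorem}; you have simply filled in the routine details of how Lemma~\ref{diMin}(i) and Theorem~\ref{DetLucHalfn} combine with that dichotomy to give the bound, exactly as the paper intends.
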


\subsection{Generating submodules of induced modules for finite groups}\label{MainModuleSection}
In this section, we record a number of results from \cite[Section 4]{GT} concerning generator numbers in submodules of induced modules. We begin with some terminology.
\begin{Definition} Let $M$ be a group, acted on by another group $G$. A \emph{$G$-subgroup} of $M$ is a subgroup of $M$ which is stabilised by $G$. We say that $M$ is \emph{generated as a $G$-group} by $X\subset M$ and write $M=\langle X\rangle_{G}$ if no proper $G$-subgroup of $M$ contains $X$. We will write $d_{G}(M)$ for the cardinality of the smallest subset $X$ of $M$ satisfying $\langle X\rangle_G=M$. Finally, write $M^{\ast}:=M\backslash\{1\}$.\end{Definition}

Note that this notation is consistent with our use of $d_G(M)$ in Lemma \ref{diMin}.
\begin{Definition} Let $G$ be a group, acting on a set $\Omega$. Write $\chi(G,\Omega)$ for the number of orbits of $G$ on $\Omega$.\end{Definition}

To avoid being cumbersome, we will also introduce some notation which will be retained for the remainder of Section \ref{MainModuleSection}: \begin{itemize}
\item Let $G$ be a finite group.
\item Fix a subgroup $H$ of $G$ of index $s\ge 2$.
\item Let $V$ be a module for $H$ of dimension $a$, over a field $\mathbb{F}$.
\item Let $K:=\core_{G}(H)$.
\item Set $W:=V\uparrow^G_H$ to be the induced module.
\item Denote the set of right cosets of $H$ in $G$ by $\Omega$.\end{itemize}

Next, define the constant $b$ as follows,
\begin{align*} b:=\sqrt\frac{2}{\pi}.\end{align*}
We also have the following definitions.
\begin{Definition}\label{KDef} For a positive integer $s$ with prime factorisation $s=p_{1}^{r_{1}}p_{2}^{r_{2}}\hdots p_{t}^{r_{t}}$, set $\omega{(s)}:=\sum r_{i}$, $\omega_{1}{(s)}:=\sum r_{i}p_{i}$, $K(s):=\omega_{1}{(s)}-\omega(s)=\sum r_i(p_i-1)$ and 
$$\ws(s)=\frac{s}{2^{K(s)}}\binom{K(s)}{\left\lfloor\frac{K(s)}{2}\right\rfloor}.$$ \end{Definition}
\begin{Definition}\label{KDef2} For a positive integer $s$ and a prime $p$, define $s_p$ to be the $p$-part of $s$. Also define $\lpp{(s)}:=\max_{p\text{ prime}} s_p$.\end{Definition}

The first main result deals with the case when $G^{\Omega}$ contains a soluble transitive subgroup.
\begin{Theorem}\label{BKGARTheorem} Suppose that $G^\Omega$ contains a soluble transitive subgroup, and let $M$ be a submodule of $W$. Also, denote by $\chi=\chi{(K,V^\ast)}$ the number of orbits of $K$ on the non-zero elements of $V$. Then
$$d_{G}(M)\le \min\left\{a,\chi\right\}\ws(s)\le \min\left\{a,\chi\right\}\left\lfloor\frac{bs}{\sqrt{\log{s}}}\right\rfloor$$ where $b:=\sqrt{2/\pi}$. Furthermore, if $s=p^{t}$, with $p$ prime, then $$d_{G}(M)\le \min\left\{a,\chi\right\}\left\lfloor \frac{bp^{t}}{\sqrt{t(p-1)}}\right\rfloor.$$
\end{Theorem}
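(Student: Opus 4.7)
The overall plan is to combine three ingredients: a bound on the number of generators of a soluble transitive permutation group of degree $s$ (the quantity $\ws(s)$, a classical estimate essentially due to Kov\'acs--Newman and refined by Bryant--Kov\'acs--Robinson); a Gasch\"utz-style procedure that lifts ``base coset'' data across $W$; and the normality of $K$ in $G$, which lets us replace individual vectors of $V$ by $K$-orbit representatives and so bring the orbit count $\chi$ into play.

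I would proceed as follows. Fix a soluble transitive subgroup $T\le G^\Omega$ and a generating set $t_1,\ldots,t_k$ of $T$ with $k\le\ws(s)$, then lift the $t_i$ to elements $g_1,\ldots,g_k\in G$. Identify $W=V\uparrow^G_H$ with the direct sum $\bigoplus_{i=1}^s V_i$ of copies of $V$ indexed by the cosets, with $V_1$ corresponding to the trivial coset. The point is that the $g_i$ already generate $G/K$ in its transitive action on $\Omega$, so applying $G$-conjugation to any $H$-submodule of $V_1$ propagates it across all the $V_i$. Consequently, to generate an arbitrary submodule $M\le W$ as a $G$-group, it suffices to produce a subset $X\subset M$ whose projection onto $V_1$ generates the $H$-submodule $M_1:=\pi_1(M)\le V$.

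Next I would bound $d_H(M_1)$ in two different ways. The elementary linear-algebra bound is $d_H(M_1)\le\dim V=a$. For the other bound, observe that $K\trianglelefteq G$ and $K\le H$, so $K$ stabilises both $V_1$ and $M_1$; moreover any $G$-submodule of $W$ containing a vector $v\in V_1$ contains its entire $K$-orbit. Hence $M_1$ is a union of $K$-orbits, and picking one nonzero representative from each such orbit yields an $H$-generating set of cardinality at most $\chi$. Combining with the first paragraph gives the main bound $d_G(M)\le\min\{a,\chi\}\cdot\ws(s)$. The main obstacle is the precise interaction between the soluble transitive action of the $g_i$ and the induction: one must verify carefully that conjugating a generating set of $M_1\le V_1$ by the $g_i$ really does exhaust all of $M$ and not merely its ``diagonal'' part.

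Finally, to pass from $\ws(s)$ to the closed-form estimates, I would apply Stirling's inequality $\binom{n}{\lfloor n/2\rfloor}\le 2^n\sqrt{2/(\pi n)}$, giving
\[
\ws(s)\le s\sqrt{\tfrac{2}{\pi K(s)}}=bs/\sqrt{K(s)},
\]
and then combine this with the arithmetic inequality $K(s)\ge\log s$, which reduces to $p-1\ge\log p$ for every prime $p$ (immediate for $p=2$ and by convexity for $p\ge 3$). This yields $\ws(s)\le bs/\sqrt{\log s}$, and hence the first displayed bound. For the prime-power case $s=p^t$ the identity $K(s)=t(p-1)$ substitutes directly into the Stirling estimate to produce $\ws(s)\le bp^t/\sqrt{t(p-1)}$, completing the refined bound.
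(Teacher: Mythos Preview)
The paper does not prove this theorem; it is quoted from \cite{GT} (Section~4 therein), so there is no in-paper proof to compare against directly. Nevertheless, your proposal contains a genuine error that is worth flagging.

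Your first ingredient is the claim that $\ws(s)$ bounds the number of generators of a soluble transitive subgroup of $\Sym(s)$. This is false. Take $s=3$: then $K(3)=2$ and
\[
\ws(3)=\frac{3}{2^{2}}\binom{2}{1}=\frac{3}{2},
\]
yet $S_3$ is soluble, transitive of degree $3$, and requires $2$ generators. So $\ws(s)$ is \emph{not} a generator bound for the group $T$ in your argument, and the attribution to Kov\'acs--Newman/Bryant--Kov\'acs--Robinson in this form is mistaken. The quantity $\ws(s)$ arises instead as a bound on $d_G(M)$ itself, obtained by an inductive argument through a chain of blocks for the soluble transitive action (climbing through the prime-index steps of a chief series), not as a product ``(generators of the base piece)$\times$(generators of the top group)''.

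This also explains why your ``main obstacle'' is not merely a technical verification: the mechanism you propose---conjugating a generating set for $\pi_1(M)$ by lifts of generators of $T$---does not in general produce a generating set for $M$, and the correct argument does not attempt this. The floor in the displayed bounds (note $d_G(M)\le\min\{a,\chi\}\lfloor bs/\sqrt{\log s}\rfloor$, not $\lfloor\min\{a,\chi\}\cdot bs/\sqrt{\log s}\rfloor$) is another hint that the proof is not a straightforward product of two independent estimates. Your Stirling computation for $\ws(s)\le bs/\sqrt{K(s)}$ and the inequality $K(s)\ge\log s$ are fine, but they only handle the second inequality in the display, not the first.
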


\begin{Remark} If $K$ has infinitely many orbits on the non-zero elements of $V$, then we assume, in Theorem \ref{BKGARTheorem}, and whenever it is used in the remainder of the paper, that $$\min\left\{a,\chi\right\}=a.$$\end{Remark}

We now move on to general finite groups (i.e. those $G$ for which $G^{\Omega}$ does not necessarily contain a soluble transitive subgroup). We retain the notation introduced at the beginning of Section \ref{MainModuleSection}. 

We begin with a definition. Recall the definitions of $\ws(s)$, $s_{p}$, and $\lpp{(s)}$ from Definitions \ref{KDef} and \ref{KDef2}.  
\begin{Definition}\label{EDEF} For a prime $p$, set
$$E(s,p):=\min\left\{{\left\lfloor \frac{bs}{\sqrt{(p-1)\log_{p}{s_{p}}}}\right\rfloor,\frac{s}{\lpp{(s/s_{p})}}}\right\}\text{ and }E_{sol}(s,p):=\min\left\{\ws(s),s_{p}\right\}$$
where we take $\left\lfloor bs/\sqrt{(p-1)\log_{p}{s_{p}}}\right\rfloor$ to be $\infty$ if $s_{p}=1$. \end{Definition}

The following is quickly proved after examining the functions $E_{sol}$ and $E$.
\begin{Proposition} Let $p$ be prime. Then $E_{sol}(s,p)\le E(s,p)$. \end{Proposition}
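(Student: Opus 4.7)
The plan is to verify two pointwise real-valued inequalities, one for each factor inside the minimum defining $E(s,p)$: (i) $\ws(s)\le bs/\sqrt{(p-1)\log_p s_p}$, and (ii) $s_p\le s/\lpp(s/s_p)$. Together these give $\min\{\ws(s),s_p\}\le \min\{bs/\sqrt{(p-1)\log_p s_p},\,s/\lpp(s/s_p)\}$, and on passing to the floor in the first factor of $E(s,p)$ this yields $E_{sol}(s,p)\le E(s,p)$.

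For (i), the main input is the standard central binomial coefficient estimate
\[
\binom{n}{\lfloor n/2\rfloor}\le \frac{b\cdot 2^n}{\sqrt{n}},
\]
valid for all $n\ge 1$ (it follows from Stirling's formula, and is checked by inspection in the small cases). Applying it with $n=K(s)$ and unpacking the definition of $\ws(s)$ yields $\ws(s)\le bs/\sqrt{K(s)}$. When $s_p=p^r>1$, the defining sum $K(s)=\sum_i r_i(p_i-1)$ contains the summand $r(p-1)=(p-1)\log_p s_p$ corresponding to the prime $p$, so $K(s)\ge (p-1)\log_p s_p$; substituting proves (i). The case $s_p=1$ is trivial by the stated convention that the right-hand side is $+\infty$.

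For (ii), the observation is purely divisibility-theoretic: $\lpp(s/s_p)$ is, by definition, a prime-power divisor of $s/s_p$, so $\lpp(s/s_p)\le s/s_p$, and hence $s/\lpp(s/s_p)\ge s_p$.

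I do not expect any substantive obstacle. The only non-elementary ingredient is the Stirling-type bound on the central binomial coefficient, which is classical; everything else is a definition-chase or an elementary divisibility statement. This matches the author's description of the proposition as ``quickly proved after examining the functions $E_{sol}$ and $E$''.
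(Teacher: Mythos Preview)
Your two pointwise inequalities (i) and (ii) are correct, and they are exactly what the paper has in mind when it says the result is ``quickly proved after examining the functions $E_{sol}$ and $E$''. The central binomial estimate $\binom{n}{\lfloor n/2\rfloor}\le b\cdot 2^{n}/\sqrt{n}$ is valid for all $n\ge 1$, and the divisibility argument for (ii) is fine.

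There is, however, a genuine issue with the step where you ``pass to the floor''. You prove the real-valued bound $\ws(s)\le bs/\sqrt{(p-1)\log_{p}s_{p}}$, but $E(s,p)$ is defined using the \emph{floor} of this quantity, and $\ws(s)$ need not be an integer. Taking the floor makes the right-hand side smaller, so you cannot simply conclude $\ws(s)\le\lfloor bs/\sqrt{(p-1)\log_{p}s_{p}}\rfloor$. Concretely, for $s=p=3$ one has $\ws(3)=3/2$, while $\lfloor 3b/\sqrt{2}\rfloor=\lfloor 3/\sqrt{\pi}\rfloor=1$ and $s/\lpp(s/s_{p})=3$, so $E(3,3)=1<3/2=E_{sol}(3,3)$. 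Thus the Proposition, read literally with the floor in place, is false.

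This is really a defect in the paper's definitions rather than in your argument: the same issue already appears in the paper's Theorem~\ref{BKGARTheorem}, which asserts $\ws(s)\le\lfloor bs/\sqrt{\log s}\rfloor$. In every application both $E_{sol}(s,p)$ and $E(s,p)$ are being used as upper bounds for the integer $d_{G}(M)$, so the discrepancy is harmless there. Your proof establishes the intended real-valued inequality $\min\{\ws(s),s_{p}\}\le\min\{bs/\sqrt{(p-1)\log_{p}s_{p}},\,s/\lpp(s/s_{p})\}$, which is what is actually needed downstream; you should just flag that the floor in the definition of $E(s,p)$ has to be dropped (or one must work with $\lfloor\ws(s)\rfloor$) for the statement to be literally correct.
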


\begin{Remark}\label{JustAbove} For any finite group $G$ and any $G$-module $M$, $d_G(M)$ is bounded above by $\chi(G,M^\ast)$.\end{Remark}

For the remainder of this section, we will make a further assumption: that the field $\mathbb{F}$ has characteristic $p>0$. The main result for general finite groups reads as follows.
\begin{Theorem} \label{pmodlemma} For a prime $q\neq p$, let $P_{q}$ be a Sylow $q$-subgroup of $G$. Also, let $P'$ be a maximal $p'$-subgroup of $G$. Let $M$ be a submodule of $W$. \begin{enumerate}[(i)]
\item If $G$ is soluble, then $$d_{G}(M)\le \min\left\{a,\chi(P'\cap K,V^{\ast})\right\}s_{p}.$$
\item Let $N$ be a subgroup of $G$ such that $N^{\Omega}$ is soluble, and let $s_{i}$, $1\le i\le t$, be the sizes of the orbits of $N$ on $\Omega$. Then\begin{enumerate}[(a)]
\item We have \begin{align*}d_{G}(M)\le &\min\left\{a,\chi(N\cap P'\cap K,V^{\ast})\vphantom{\frac{1}{2}}\right\}\times\sum_{i=1}^{t}\ws{(s_i)}.\end{align*}
\item If $N$ is soluble, and $P_N'$ is a $p$-complement in $N$, then 
 \begin{align*}d_{G}(M)\le &\min\left\{a,\chi(P_N'\cap K,V^{\ast})\vphantom{\frac{1}{2}}\right\}\times\sum_{i=1}^{t}E_{sol}(s_i,p).\end{align*}\end{enumerate}
\item $d_{G}(M)\le \min\left\{a,\chi(P_{q}\cap K,V^\ast)\right\}s/s_{q}$. 
\item Assume that $s_p>1$. Then $$d_{G}(M)\le \min\left\{a,\chi(K,V^{\ast})\right\}\left\lfloor\frac{bs}{\sqrt{\log{s_p}}}\right\rfloor.$$
\end{enumerate} 
\end{Theorem}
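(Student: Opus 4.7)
The plan is to use Mackey's decomposition of $W|_N$ for an appropriately chosen $N\leq G$, combined with the fundamental inequality $d_G(M)\leq d_N(M)$ (since any $N$-generating set is a $G$-generating set), to reduce the problem to applying Theorem~\ref{BKGARTheorem} on each Mackey summand. Fixing $(N,H)$-double coset representatives $g_1,\dots,g_t$ with $L_i:=N\cap H^{g_i}$ and $s_i:=[N:L_i]$ (so $s_i$ is the size of the $i$-th $N$-orbit on $\Omega$), Mackey yields $W|_N\cong\bigoplus_{i=1}^t V^{g_i}\!\uparrow^N_{L_i}=:\bigoplus_{i=1}^t W_i$. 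Iterating $d_N(A)\leq d_N(B)+d_N(A/B)$ along the filtration $0\subseteq M\cap W_t\subseteq M\cap(W_{t-1}\oplus W_t)\subseteq\cdots\subseteq M$ gives
$$d_G(M)\;\leq\;d_N(M)\;\leq\;\sum_{i=1}^{t}d_N(M_i'),$$
where each $M_i'$ is a submodule of $W_i$. When $N$, or its image on the $i$-th orbit, is soluble, Theorem~\ref{BKGARTheorem} bounds $d_N(M_i')\leq\min\{a,\chi_i\}\widetilde\omega(s_i)$ (or, when $s_i$ is a prime power, its $s_i=p^{a_i}$-refinement), with $\chi_i$ the number of orbits of $\core_N(L_i)$ on $(V^{g_i})^{\ast}$. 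Normality of $K$ in $G$ gives $N\cap K\leq\core_N(L_i)$ and $(N\cap K)^{g_i^{-1}}\leq K\leq H$, so all $\chi_i$ can be simultaneously dominated by $\chi(J,V^{\ast})$ for a single auxiliary $J\leq H$.

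The four parts are then obtained by specialising $N$ and $J$. For (i), take $N=G$ with its Hall $p'$-subgroup $P'$ and Sylow $p$-subgroup $P_p$ (so that $G=P'P_p$ by solubility): one Mackey step through $P'$ absorbs the orbit-count factor $\chi(P'\cap K,V^\ast)$, while the complementary $p$-action of $P_p$ on the $P'$-orbits produces the coefficient $s_p$. For (ii), apply the procedure with the given $N$: part~(a) invokes Theorem~\ref{BKGARTheorem} directly on each summand since $N^{\Omega}$ is soluble, yielding the sum $\sum_i\widetilde\omega(s_i)$; part~(b) uses the soluble-specific bound of Theorem~\ref{BKGARTheorem} with the function $E_{sol}$ and the $p$-complement $P_N'\leq N$. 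For (iii), take $N=P_q$; since $q\neq p$ the subgroup $P_q$ is a $p'$-group, so Maschke makes $W|_{P_q}$ semisimple, and $d_{P_q}(M)$ equals the maximum isotypic multiplicity, which one bounds by $(s/s_q)\min\{a,\chi(P_q\cap K,V^\ast)\}$ using that $P_q$-orbits on $\Omega$ have $q$-power sizes all $\leq s_q$. For (iv), take $N=P_p$ a Sylow $p$-subgroup: its orbits have $p$-power sizes $p^{a_i}$ summing to $s$ and each bounded by $s_p$; the prime-power form of Theorem~\ref{BKGARTheorem} contributes $\lfloor bp^{a_i}/\sqrt{a_i(p-1)}\rfloor$ per orbit, and the sum collapses to $\lfloor bs/\sqrt{\log s_p}\rfloor$ by a concavity argument.

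The main obstacle will be the arithmetic bookkeeping — verifying that the orbit-wise sums telescope into the compact coefficients $s_p$, $\sum_i\widetilde\omega(s_i)$, $\sum_i E_{sol}(s_i,p)$, $s/s_q$, and $\lfloor bs/\sqrt{\log s_p}\rfloor$. The most delicate case is (iii), where the target coefficient $s/s_q$ can be strictly smaller than the number $t$ of Mackey summands; this rules out a naive summand-by-summand application of Theorem~\ref{BKGARTheorem} and forces us to exploit the semisimplicity of $W|_{P_q}$ directly, counting isotypic multiplicities rather than generators per orbit. Part (iv) needs the concavity estimate for $x\mapsto bp^x/\sqrt{x(p-1)}$ under the constraint that the $p^{a_i}$ sum to $s$ and are each $\leq s_p$, while in (i) the Hall decomposition and the double Mackey step add a case analysis, which however is manageable thanks to the additional structure in soluble groups.
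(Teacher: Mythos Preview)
The paper does not prove this theorem; it is quoted from \cite[Section~4]{GT} along with the surrounding results of Section~\ref{MainModuleSection}. So there is no in-paper argument to compare against, and I will assess your approach on its own terms.

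Your Mackey-plus-restriction scheme is the right architecture, but two of your specialisations are miscalibrated. In part~(iv) you take $N=P_p$, a Sylow $p$-subgroup of $G$. With that choice, the core appearing in each Mackey summand is $\core_{P_p}(L_i)\ge P_p\cap K$, and you can only conclude $\chi_i\le\chi\bigl((P_p\cap K)^{g_i^{-1}},V^{\ast}\bigr)$, which is in general strictly larger than the required $\chi(K,V^{\ast})$. To recover the full $K$ in the orbit count you must instead take $N:=\pi^{-1}(Q)$ where $Q$ is a Sylow $p$-subgroup of $S=G^{\Omega}$; then $K\le N$, so $\core_N(L_i)\ge K$ for every $i$, and $\chi_i\le\chi(K,V^{\ast})$ as needed. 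Moreover, your claim that the $P_p$-orbit sizes are ``each bounded by $s_p$'' is the wrong way round: since $P_pK/K$ is a Sylow $p$-subgroup of the transitive group $S$, each of its orbits has size at least $s_p$ (compare $|S_\alpha|_p=|S|_p/s_p$ with $|P_\alpha|$). Fortunately this is exactly what the telescoping step requires: with $p^{a_i}\ge s_p$ one gets $a_i(p-1)\ge\log s_p$, hence $\sum_i bp^{a_i}/\sqrt{a_i(p-1)}\le bs/\sqrt{\log s_p}$, and no concavity is needed.

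Part~(iii) has the same direction error: Sylow $q$-orbit sizes are at least $s_q$, not at most, so the number $t$ of Mackey summands satisfies $t\le s/s_q$. Your multiplicity argument then still needs one more ingredient: you must show that each summand $W_i=V^{g_i}\!\uparrow_{L_i}^{P_q}$ contributes at most $\min\{a,\chi(P_q\cap K,V^{\ast})\}$ to the maximal isotypic multiplicity, not merely that $W|_{P_q}$ is semisimple. This is where the orbit count $\chi(P_q\cap K,V^{\ast})$ enters, via the action of the normal subgroup $P_q\cap K\le\core_{P_q}(L_i)$ on each $V^{g_i}$; as stated, your sketch asserts the bound without supplying this step.
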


We also record some corollaries from \cite{GT}.
\begin{Corollary}\label{pmodlemmaMainCor} Define $E'$ to be $E_{sol}$ if $G^{\Omega}$ contains a soluble transitive subgroup, and $E':=E$ otherwise. Let $M$ be a submodule of $W$. Then \begin{enumerate}[(i)]
\item $d_G(M)\le aE'(s,p)$.
\item Suppose that for some $q\neq p$, a Sylow $q$-subgroup of $K$ acts transitively on the non-identity elements of $V$. Then
$$d_{G}(M)\le \min\left\{\left\lfloor\frac{bs}{\sqrt{\log{s_p}}}\right\rfloor,\frac{s}{s_q}\}\right\rfloor,$$
where the right hand side above is taken to be $\frac{s}{s_q}$ if $s_p=1$.\end{enumerate}
\end{Corollary}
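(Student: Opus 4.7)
The plan is to derive each bound in the corollary by applying Theorems~\ref{BKGARTheorem} and~\ref{pmodlemma} with carefully chosen auxiliary subgroup $N$ and prime $q$. Part~(ii) is almost immediate: the hypothesis that a Sylow $q$-subgroup of $K$ acts transitively on $V^\ast$ yields, for a Sylow $q$-subgroup $P_q$ of $G$ chosen so that $P_q\cap K$ is Sylow in $K$ (which exists since $K\trianglelefteq G$), that $\chi(P_q\cap K,V^\ast)=1$ and \emph{a fortiori} $\chi(K,V^\ast)=1$. Plugging into Theorem~\ref{pmodlemma}(iii) gives $d_G(M)\le s/s_q$; plugging into Theorem~\ref{pmodlemma}(iv), when $s_p>1$, gives $d_G(M)\le \lfloor bs/\sqrt{\log{s_p}}\rfloor$; taking the minimum and absorbing $s_p=1$ into the convention then yields the stated bound.

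For Part~(i) I would split on whether $G^\Omega$ contains a soluble transitive subgroup. In the general case one has $E'=E=\min\{\lfloor bs/\sqrt{(p-1)\log_p{s_p}}\rfloor,\, s/\lpp(s/s_p)\}$. The bound $d_G(M)\le a s/\lpp(s/s_p)$ follows from Theorem~\ref{pmodlemma}(iii) applied with the prime $q\ne p$ chosen so that $s_q=\lpp(s/s_p)$. For the other bound I would apply Theorem~\ref{pmodlemma}(ii)(b) with $N$ a Sylow $p$-subgroup of $G$, which is automatically soluble. The key input is the standard Sylow-theoretic fact that, since $G$ acts transitively on $\Omega$, each orbit of $N$ has size $p^{k_i}$ with $k_i\ge\log_p{s_p}$; combined with the Stirling estimate $\ws(p^k)\le bp^k/\sqrt{(p-1)k}$, this gives $\sum_i \ws(p^{k_i})\le bs/\sqrt{(p-1)\log_p{s_p}}$, as required.

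In the soluble-transitive case $E'=E_{sol}=\min\{\ws(s),s_p\}$, the bound $d_G(M)\le a\ws(s)$ is immediate from Theorem~\ref{BKGARTheorem}, and for $d_G(M)\le a s_p$ the plan is to invoke Theorem~\ref{pmodlemma}(ii)(b) with $N$ a soluble subgroup of $G$ whose image in $G^\Omega$ is a soluble transitive subgroup, so that $N$ has a single orbit of size $s$ on $\Omega$ and $E_{sol}(s,p)\le s_p$ gives what is needed. The hard part is precisely the existence of such a soluble $N$: a soluble transitive subgroup of the quotient $G^\Omega$ need not lift to a soluble subgroup of $G$ still transitive on $\Omega$, unless the extension splits (for instance by Schur--Zassenhaus when $(|K|,[G:K])=1$). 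In the general situation one has to either carry out a more delicate Sylow/Hall analysis to produce a suitable $N$, or to proceed by a completely different route that reconstructs the $s_p$ bound from the orbit data of a Sylow $p$-subgroup---this is the principal technical obstacle in the proof.
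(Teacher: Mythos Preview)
The paper does not prove Corollary~\ref{pmodlemmaMainCor}: it is one of the results explicitly recorded from \cite{GT} (see the sentence ``We also record some corollaries from \cite{GT}'' immediately preceding it). So there is no in-paper argument to compare against; one can only ask whether your derivation from Theorems~\ref{BKGARTheorem} and~\ref{pmodlemma}, as stated here, goes through.

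Your treatment of part~(ii) is correct and is the natural argument. Your treatment of part~(i) in the case $E'=E$ is also sound: the term $s/\lpp(s/s_p)$ comes from Theorem~\ref{pmodlemma}(iii) with $q$ chosen so that $s_q=\lpp(s/s_p)$, and the term $\lfloor bs/\sqrt{(p-1)\log_p s_p}\rfloor$ follows from Theorem~\ref{pmodlemma}(ii)(a) with $N$ a Sylow $p$-subgroup of $G$, using the standard fact that every $N$-orbit on $\Omega$ has size at least $s_p$ together with the central-binomial estimate for $\ws(p^k)$. (Theorem~\ref{pmodlemma}(iv) alone only gives the weaker denominator $\sqrt{\log_2 s_p}$, so the Sylow argument really is needed here.)

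Where your proposal stops short is precisely where you flag it. In the soluble-transitive case you still need $d_G(M)\le a\,s_p$, and you have correctly identified that the hypothesis concerns $G^\Omega=G/K$, not $G$ itself: Theorem~\ref{pmodlemma}(ii)(b) requires a soluble subgroup $N$ of $G$ transitive on $\Omega$, and a soluble transitive subgroup of $G/K$ need not lift to one of $G$ through an arbitrary (possibly insoluble) kernel $K$. This is a genuine gap, not a red herring---one checks (for instance $s=30$, $p=2$, where $E(s,p)=6>2=s_p$) that the $s_p$ bound is not subsumed by the general case. From the statements recorded in the present paper alone this step cannot be closed; the resolution lies in the finer arguments of \cite{GT}, which the paper is simply quoting.
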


\begin{Corollary}\label{pq} Let $M$ be a submodule of $W$, and fix $0<\alpha<1$. \begin{enumerate}[(i)]
\item If $s_{p}\ge s^{\alpha}$, then $d_{G}(M)\le aE(s,p)\le a\left\lfloor\frac{bs\sqrt{\frac{1}{\alpha}}}{\sqrt{\log{s}}}\right\rfloor$;
\item If $s_{p}\le s^{\alpha}$, then $d_{G}(M)\le aE(s,p)\le a\left\lfloor \frac{\frac{1}{1-\alpha}s}{c'\log{s}}\right\rfloor$;
\item We have 
$$d_{G}(M)\le aE(s,p)\le
\begin{cases}
\left\lfloor \frac{2as}{c'\log{s}}\right\rfloor, & \text{if }2\le s\le 1260,\\
\left\lfloor \frac{abs\sqrt{2}}{\sqrt{\log{s}}}\right\rfloor, & \text{if }s\ge 1261.
\end{cases}
$$\end{enumerate}
\end{Corollary}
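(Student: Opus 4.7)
The first inequality $d_G(M) \le aE(s,p)$ in each of (i), (ii), (iii) is immediate from Corollary \ref{pmodlemmaMainCor}(i), so all the work lies in bounding $E(s,p)$ itself. Since $E(s,p)$ is by definition the minimum of $\lfloor bs/\sqrt{(p-1)\log_p s_p}\rfloor$ and $s/\lpp(s/s_p)$, the plan is to bound the first factor under the hypothesis of (i), the second factor under the hypothesis of (ii), and combine both with $\alpha = 1/2$ for (iii).

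For part (i), the elementary inequality $p-1 \ge \log p$, valid at every prime (with equality at $p=2$), rearranges to $(p-1)/\log p \ge 1$, so
\[
(p-1)\log_p s_p = \frac{(p-1)\log s_p}{\log p} \ge \log s_p \ge \alpha\log s
\]
by the hypothesis $s_p \ge s^{\alpha}$. Substituting into the first term of $E(s,p)$ gives exactly the claimed bound.

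For part (ii), the key ingredient is a Chebyshev-type lower bound on $\lpp$: there is an absolute constant $c'>0$ such that $\lpp(m) \ge c'\log m$ for every integer $m\ge 2$. To see this, set $q = \lpp(m)$; every prime-power divisor of $m$ is at most $q$, giving $m \le q^{\omega(m)}$, while every prime divisor of $m$ is at most $q$, giving $\omega(m) \le \pi(q) \ll q/\log q$ by Chebyshev's theorem. Combining these yields $\log m \le \omega(m)\log q \ll q$, as required. Applying the lemma to $m = s/s_p \ge s^{1-\alpha}$ (using the hypothesis $s_p \le s^{\alpha}$) gives $\lpp(s/s_p) \ge c'(1-\alpha)\log s$, which rearranges to the stated bound.

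Part (iii) then follows by applying (i) and (ii) with $\alpha = 1/2$: for any $s$, exactly one of $s_p \ge s^{1/2}$ or $s_p < s^{1/2}$ holds, and so $E(s,p)$ is bounded by the larger of $\lfloor bs\sqrt{2}/\sqrt{\log s}\rfloor$ and $\lfloor 2s/(c'\log s)\rfloor$. A short comparison shows that the two expressions cross when $\log s = 2/(bc')^2$, and inserting the explicit value of $c'$ coming from the Chebyshev step pins down the numerical threshold $s = 1261$. The genuine obstacle in the argument is the Chebyshev-type estimate $\lpp(m)\gg \log m$ used in part (ii); everything else is algebraic manipulation of the definition of $E(s,p)$ together with the hypotheses on $s_p$.
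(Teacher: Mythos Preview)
The paper does not actually prove Corollary~\ref{pq}; it is recorded verbatim from \cite[Section~4]{GT} along with the other results of Section~\ref{MainModuleSection}, so there is no in-paper proof to compare against. Your argument is therefore being assessed on its own merits.

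Your approach is sound and is essentially the natural one: bound the first term in the definition of $E(s,p)$ under hypothesis~(i), bound the second term under hypothesis~(ii) via a Chebyshev-type lower bound $\lpp(m)\ge c'\log m$, and then set $\alpha=1/2$ for~(iii). The inequality $(p-1)\log_p s_p\ge \log s_p$ in part~(i) and the deduction for part~(ii) are both correct.

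Two small points deserve attention. First, in your Chebyshev step you write $\omega(m)\le\pi(q)$, implicitly using $\omega$ for the number of \emph{distinct} prime factors; but in this paper (Definition~\ref{KDef}) $\omega(s)=\sum r_i$ counts prime factors \emph{with multiplicity}, so you should introduce separate notation to avoid a clash. Second, part~(iii) makes a precise numerical claim about the crossover at $s=1261$, and this depends on an explicit value of $c'$ which you never specify. Your sketch ``inserting the explicit value of $c'$\ldots pins down the numerical threshold'' is not quite a proof: one needs the sharp constant in $\lpp(m)\ge c'\log m$ (which comes from the explicit Chebyshev bound $\pi(q)\le cq/\log q$ with a specific $c$), and then a short calculation or finite check to verify the threshold. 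As written, your argument establishes the qualitative shape of~(iii) but not the exact numerical cutoff.
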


\begin{Corollary}\label{pmodlemmacor2} Let $M$ be a submodule of $W$. If $G$ contains a soluble subgroup $N$, acting transitively on $\Omega$, then \begin{align*}d_{G}(M)\le &\min\left\{a,\chi(P_N'\cap K,V^\ast)\right\}E(s,p)\end{align*}
where $P_N'$ is a $p$-complement in $N$.\end{Corollary}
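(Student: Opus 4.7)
The statement is a direct corollary of Theorem \ref{pmodlemma}(ii)(b) combined with the Proposition that $E_{sol}(s,p)\le E(s,p)$. The plan is therefore simply to specialise Theorem \ref{pmodlemma}(ii)(b) to the case in which the soluble subgroup $N$ is transitive on $\Omega$, and then replace $E_{sol}$ by $E$ via the stated inequality.

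In more detail: since $N$ acts transitively on $\Omega$, the list $s_1,\ldots,s_t$ of $N$-orbit sizes on $\Omega$ collapses to the single value $s_1=s$, so the sum $\sum_{i=1}^{t}E_{sol}(s_i,p)$ appearing in Theorem \ref{pmodlemma}(ii)(b) is just $E_{sol}(s,p)$. The quoted theorem then gives
\[
 d_{G}(M)\le \min\!\left\{a,\chi(P_N'\cap K,V^{\ast})\right\}\,E_{sol}(s,p),
\]
where $P_N'$ is a $p$-complement in $N$. Finally, the Proposition stated just after Definition \ref{EDEF} asserts $E_{sol}(s,p)\le E(s,p)$, and substituting this inequality into the previous display gives exactly
\[
 d_{G}(M)\le \min\!\left\{a,\chi(P_N'\cap K,V^{\ast})\right\}\,E(s,p),
\]
as required.

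There is no real obstacle in the argument; the only thing to verify is that the hypotheses of Theorem \ref{pmodlemma}(ii)(b) are met, namely that $N$ is a soluble subgroup of $G$ (given) and that $P_N'$ is indeed a $p$-complement in $N$ (by definition), after which the two steps above close out the proof.
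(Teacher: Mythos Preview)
Your proposal is correct and is precisely the natural argument: the paper itself does not supply a proof here but records the result from \cite{GT}, and your derivation from Theorem~\ref{pmodlemma}(ii)(b) together with the inequality $E_{sol}(s,p)\le E(s,p)$ is exactly how one would expect the corollary to be obtained.
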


\subsection{The structure of a weakly quasiprimitive linear group}
We close the preliminary section by recording some a series of lemmas from \cite[Section 2]{derek} concerning the structure of a weakly quasiprimitive linear group (see Definition \ref{ImprimitiveDef}).
\begin{Lemma}[{\bf \cite{derek}, Lemma 2.13}]\label{DefLemma} Let $\mathbb{F}$ be a field, and let $R\le GL_{r}(\mathbb{F})$ be finite, irreducible and weakly quasiprimitive. Then $R$ has a characteristic subgroup $K$ such that $K$ is isomorphic to a subgroup $K_{1}$ of $GL_{r/f}(\mathbb{F}_{1})$, for some divisor $f$ of $r$ and some extension $\mathbb{F}_{1}$ of $\mathbb{F}$, with $[\mathbb{F}_{1}:\mathbb{F}]=f$. All characteristic abelian subgroups of $K_{1}$ are contained in $Z(GL_{r/f}(\mathbb{F}_{1}))$, and $K_{1}$ is weakly quasiprimitive. Moreover, $R/K$ is abelian of order at most $f$, and embeds naturally in $\Gal(\mathbb{F}_{1}:\mathbb{F})$.\end{Lemma}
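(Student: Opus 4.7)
The plan is to carry out a classical Clifford--Schur construction: pick a maximal characteristic abelian subgroup of $R$ and use it to manufacture a larger field over which a characteristic subgroup of $R$ is defined on a smaller-dimensional space, the quotient then living inside the Galois group of the field extension.

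First, let $A$ be a maximal characteristic abelian subgroup of $R$. Weak quasiprimitivity of $R$ guarantees that $A$ acts homogeneously on the natural module $V\cong\mathbb{F}^{r}$, so all irreducible $\mathbb{F}A$-constituents of $V$ are $\mathbb{F}A$-isomorphic. Because $A$ is abelian, Schur's Lemma applied to an irreducible constituent forces the enveloping algebra $\mathbb{F}[A]\leq\mathrm{End}_{\mathbb{F}}(V)$ to be a field; call it $\mathbb{F}_{1}$, and set $f:=[\mathbb{F}_{1}:\mathbb{F}]$. Homogeneity then endows $V$ with the structure of an $\mathbb{F}_{1}$-vector space of dimension $r/f$. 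Define $K:=C_{R}(A)$, which centralises $A$ and hence all of $\mathbb{F}_{1}$; then $K\leq GL_{r/f}(\mathbb{F}_{1})$ and the image is the claimed $K_{1}$. Since $A$ is characteristic in $R$, so is $K$. The conjugation action of $R$ on $A$ extends to a faithful action of $R/K$ on $\mathbb{F}_{1}=\mathbb{F}[A]$ by $\mathbb{F}$-algebra automorphisms; as $A$ is a finite abelian group, $\mathbb{F}_{1}/\mathbb{F}$ is a Galois extension (generated by roots of unity), so one obtains an embedding $R/K\hookrightarrow\mathrm{Gal}(\mathbb{F}_{1}/\mathbb{F})$, giving $|R/K|\leq f$ and the abelianness of $R/K$.

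For the statement about characteristic abelian subgroups of $K_{1}$: let $B$ be such a subgroup of $K_{1}$, equivalently of $K$. As characteristic-in-characteristic is characteristic, $B$ is characteristic in $R$. Since $B\leq K=C_{R}(A)$ commutes with $A$, the product $BA$ is a characteristic abelian subgroup of $R$, so maximality of $A$ forces $B\leq A$. Now $A$ acts on $V$ as $\mathbb{F}_{1}$-scalars, so $B\leq Z(GL_{r/f}(\mathbb{F}_{1}))$, as required.

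The technical heart, and the main obstacle, is the weak quasiprimitivity of $K_{1}$. Given a characteristic subgroup $C$ of $K_{1}$, the same argument shows $C$ is characteristic in $R$, so $V_{\mathbb{F}}$ is $\mathbb{F}C$-homogeneous. The issue is to upgrade this to $\mathbb{F}_{1}C$-homogeneity of $V_{\mathbb{F}_{1}}$. Assume for contradiction a non-trivial $\mathbb{F}_{1}C$-homogeneous decomposition $V_{\mathbb{F}_{1}}=U_{1}\oplus\cdots\oplus U_{k}$ with $k\geq 2$. The subgroup $K$, being $\mathbb{F}_{1}$-linear and normalising $C$, permutes the $U_{i}$; coset representatives of $R/K$ do so $\sigma$-semilinearly for the corresponding $\sigma\in\mathrm{Gal}(\mathbb{F}_{1}/\mathbb{F})$. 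As $R$ is $\mathbb{F}$-irreducible on $V$, the orbit-sum of any single $U_{i}$ must be all of $V$, so $R$ is transitive on $\{U_{1},\ldots,U_{k}\}$. The plan to finish is then to manufacture from this data either a characteristic abelian subgroup of $R$ strictly larger than $A$ (for instance by looking at the subgroup of $K$ that acts as a scalar on each $U_{i}$, using $Z(C)\leq A$ to control it) -- contradicting maximality -- or a $\mathbb{F}R$-invariant decomposition of $V$ realising a non-homogeneous characteristic subgroup of $R$, contradicting weak quasiprimitivity of $R$. The delicate bookkeeping here is to distinguish genuine $\mathbb{F}C$-isomorphism of the $U_{i}$ after restriction of scalars from mere identification through Galois twists, which is what makes this step significantly harder than the preceding ones.
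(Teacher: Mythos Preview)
The paper does not give its own proof of this lemma; it is quoted verbatim from \cite[Lemma 2.13]{derek}. Your overall Clifford--Schur strategy (maximal characteristic abelian $A$, set $\mathbb{F}_{1}=\mathbb{F}[A]$, $K=C_{R}(A)$, identify $R/K$ inside $\Gal(\mathbb{F}_{1}/\mathbb{F})$) is exactly the standard one used in that reference, and your arguments for the field structure, for the embedding $R/K\hookrightarrow\Gal(\mathbb{F}_{1}/\mathbb{F})$, and for the claim about characteristic abelian subgroups of $K_{1}$ are all correct.

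The genuine gap is in your final step, the weak quasiprimitivity of $K_{1}$, which you flag as ``significantly harder'' and leave as an incomplete contradiction sketch. In fact this step is no harder than the others, and your proposed route through semilinear permutations of homogeneous components is unnecessary. The missing observation is this: if $C$ is characteristic in $K$, then since $A$ and $K$ are both characteristic in $R$ the product $AC$ is a characteristic subgroup of $R$; hence $V$ is $\mathbb{F}[AC]$-homogeneous by weak quasiprimitivity of $R$. Now $A\le Z(K)$ (as $A$ is abelian, $A\le C_{R}(A)=K$, and $K$ centralises $A$), so $A$ commutes with $C$, and the enveloping algebra $\mathbb{F}[AC]\subseteq\mathrm{End}_{\mathbb{F}}(V)$ is precisely $\mathbb{F}_{1}[C]$. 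Consequently $\mathbb{F}_{1}C$-submodules of $V$ are exactly the $\mathbb{F}[AC]$-submodules, and $\mathbb{F}_{1}C$-module isomorphisms between them are exactly the $\mathbb{F}[AC]$-module isomorphisms (an $\mathbb{F}_{1}$-linear map is just an $\mathbb{F}$-linear map commuting with $A$). Thus $\mathbb{F}[AC]$-homogeneity of $V$ is the same statement as $\mathbb{F}_{1}C$-homogeneity, and you are done directly, with no case analysis.
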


For a finite, irreducible, weakly quasiprimitive subgroup $R$ of $GL_{r}(\mathbb{F})$, we define $(f(R),K_{1}(R))$ to be any pair consisting of a positive integer $f=f(R)$ and a group $K_{1}=K_{1}(R)$ satisfying the conclusion of Lemma \ref{DefLemma}, with $f$ minimal.

We now record some results concerning the generalised Fitting subgroup of a finite group.
\begin{Lemma}[{\bf \cite{derek}, Lemma 2.14}]\label{FStarLemma} Let $L$ be the generalised fitting subgroup of a finite group $R$. Then $L$ is a central product of $Z(R)$, the noncentral subgroups $O_{q_{i}}(R)$ for a set of primes $q_{i}$, and a collection of normal subgroups $U_{j}$ of $R$. Each $U_{j}$ is a central product of $u_{j}\ge 1$ copies of a quasisimple group $T_{j}$, and conjugation by $R$ permutes these copies transitively. Also, $C_{R}(L)=Z(L)$.\end{Lemma}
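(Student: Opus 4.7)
The plan is to recognise the statement as essentially a repackaging of the standard structure theorem for the generalised Fitting subgroup $F^*(R) = F(R)E(R)$, combined with a grouping of the components of $R$ into $R$-orbits. My proof would proceed in four short steps.

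First I would recall that $L = F(R)E(R)$, where $F(R)$ is the Fitting subgroup and $E(R)$ is the layer (the product of the components of $R$). The Fitting subgroup is the direct product of its primary components: $F(R)=\prod_{p} O_p(R)$, with each $O_p(R)$ characteristic (hence normal) in $R$. For each prime $p$ there are two cases: either $O_p(R)\le Z(R)$, in which case I absorb it into the $Z(R)$-factor of the eventual central product, or $O_p(R)\not\le Z(R)$, in which case I list it separately as one of the $O_{q_i}(R)$.

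Next I would analyse $E(R)$. By definition, the components of $R$ are the quasisimple subnormal subgroups, and a standard fact asserts that any two distinct components centralise each other. Now $R$ acts on the (finite) set of components by conjugation, and I partition this set into $R$-orbits $\mathcal{O}_1,\ldots,\mathcal{O}_k$. For each orbit $\mathcal{O}_j$ pick a representative $T_j$ (a quasisimple group) and set $U_j$ to be the product of the $u_j:=|\mathcal{O}_j|\ge 1$ components in $\mathcal{O}_j$. Since these components pairwise centralise each other, $U_j$ is a central product of $u_j$ copies of $T_j$; since $\mathcal{O}_j$ is $R$-invariant, $U_j$ is normal in $R$ with the conjugation action transitive on the copies of $T_j$.

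Third, I would assemble the central product. Distinct $U_j$ commute (their components are pairwise distinct), distinct $O_p(R)$ commute (they are direct factors of $F(R)$), and the standard identity $[F(R),E(R)]=1$ gives that every $O_{q_i}(R)$ commutes with every $U_j$. Since $Z(R)$ commutes with everything, all listed pieces pairwise commute, so their product is a central product equal to $L$. Finally, for the assertion $C_R(L)=Z(L)$ I would quote the well known theorem that $C_R(F^*(R))\le F^*(R)$ (see e.g.\ Aschbacher); intersecting with $L$ gives $C_R(L)=C_L(L)=Z(L)$.

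There is no real obstacle: all ingredients ($F(R)=\prod_p O_p(R)$, commutativity of distinct components, $[F(R),E(R)]=1$, and $C_R(F^*(R))\le F^*(R)$) are classical and can simply be cited. The only genuine organisational point is recognising that separating $Z(R)$ out of the decomposition, rather than leaving central primary components and central parts of the $U_j$ implicit, is what allows the statement to be phrased as a clean central product with $Z(R)$ as one factor.
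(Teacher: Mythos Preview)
Your proposal is correct and is the standard argument for this structural statement. Note, however, that the paper does not actually give a proof of this lemma: it is simply quoted from \cite{derek} (Lemma~2.14) as a background fact, with no argument supplied. So there is nothing to compare against; your outline is exactly the kind of proof one would write if asked to justify the cited result, and each of the ingredients you invoke ($F^*(R)=F(R)E(R)$, $F(R)=\prod_p O_p(R)$, pairwise commutativity of distinct components, $[F(R),E(R)]=1$, and $C_R(F^*(R))\le F^*(R)$) is classical.
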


\begin{Lemma}[{\bf \cite{derek}, Lemma 2.15}]\label{FStarModule} Let $R\le GL_{r}(\mathbb{F})$ be finite and completely reducible, and let $L$, $q_{i}$ and $U_{j}$ be as in Lemma \ref{FStarLemma}. Assume that $\mathbb{F}$ is a splitting field for each central factor of $L$, and let $C$ be a constituent of the natural $L$-module. Then $C$ decomposes as a tensor product of a one-dimensional module for $Z(R)$, irreducible modules $M_{q_{i}}$ for each $O_{q_{i}}(R)$, and irreducible modules $M_{U_{j}}$ for each $U_{j}$.\end{Lemma}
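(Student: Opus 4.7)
The plan is to argue by iterated application of a standard representation-theoretic fact about central products over splitting fields: if $H = H_{1}\circ H_{2}$ is an internal central product (i.e. $[H_{1},H_{2}]=1$ and $H_{1}\cap H_{2}\le Z(H)$), and $\mathbb{F}$ is a splitting field for both $H_{1}$ and $H_{2}$, then every irreducible $\mathbb{F}H$-module $M$ factors as a tensor product $M \cong M_{1}\otimes_{\mathbb{F}} M_{2}$ of irreducible $\mathbb{F}H_{i}$-modules, with the central characters of the $M_{i}$ on $H_{1}\cap H_{2}$ compatible with the central character of $M$. This is the only tool needed.

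The auxiliary fact is proved by restricting $M$ to $H_{1}$. Since $H_{2}$ commutes with $H_{1}$, it acts on $M$ by $\mathbb{F}H_{1}$-module endomorphisms; because $\mathbb{F}$ splits $H_{1}$, Schur's lemma forces $M{\downarrow}_{H_{1}}$ to be isotypic, so $M \cong M_{1}\otimes_{\mathbb{F}} W$ as $\mathbb{F}H_{1}$-modules for some irreducible $M_{1}$ and some $\mathbb{F}$-vector space $W$. The $H_{2}$-action then induces a representation on $W$, which must be irreducible by irreducibility of $M$; compatibility of central characters on $H_{1}\cap H_{2}$ is immediate from the construction.

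Applying this fact iteratively to the central product decomposition $L = Z(R)\circ O_{q_{1}}(R)\circ\cdots\circ O_{q_{t}}(R)\circ U_{1}\circ\cdots\circ U_{m}$ furnished by Lemma \ref{FStarLemma}, and peeling off one factor at a time, any irreducible constituent $C$ of $V{\downarrow}_{L}$ (which exists because $R$ is completely reducible on $V$, and therefore so is $L$) decomposes as $C \cong M_{Z}\otimes M_{q_{1}}\otimes\cdots\otimes M_{q_{t}}\otimes M_{U_{1}}\otimes\cdots\otimes M_{U_{m}}$, each factor being an irreducible module for the corresponding subgroup. Since $Z(R)$ is abelian and $\mathbb{F}$ splits it by hypothesis, $M_{Z}$ is one-dimensional, as required.

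The main obstacle is verifying that at each stage of the peel-off, $\mathbb{F}$ remains a splitting field for the partial product that has already been isolated, so that the Schur-lemma step can be reapplied. This is automatic by induction: once the tensor decomposition is established at a given step, the endomorphism algebra of any irreducible module for the partial product is $\mathbb{F}$ (being a tensor product of copies of $\mathbb{F}$), so $\mathbb{F}$ splits the partial product and the induction proceeds. The only other bookkeeping—namely, that the intersections of the successive factors lie in the center, so that the central-character compatibility required by the auxiliary fact holds—is guaranteed by the central product structure of $L$ itself.
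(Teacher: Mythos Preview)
Your argument is correct and is the standard route to this result. Note, however, that the paper does not actually supply a proof of this lemma: it is quoted verbatim from \cite[Lemma 2.15]{derek}, so there is no in-paper proof to compare against. Your approach---iterated tensor factorisation over a central product via Schur's lemma on a split factor---is exactly the expected one and is presumably what the cited reference does as well.

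One small expository point: the ``obstacle'' paragraph is more elaborate than necessary. In the auxiliary fact you only use that $\mathbb{F}$ splits $H_{1}$, not $H_{2}$; so if at each step you peel off a single central factor $X_{k}$ (for which $\mathbb{F}$ is a splitting field by hypothesis) and leave the remaining central product as $H_{2}$, no inductive verification that $\mathbb{F}$ splits partial products is required. Also, the sentence ``Schur's lemma forces $M{\downarrow}_{H_{1}}$ to be isotypic'' slightly conflates two steps: isotypicity comes from irreducibility of $M$ under $H=H_{1}H_{2}$ together with the fact that both $H_{1}$ and $H_{2}$ preserve the $H_{1}$-isotypic decomposition; Schur's lemma (in the split case) is then what identifies $\mathrm{End}_{\mathbb{F}H_{1}}(M_{1}\otimes W)$ with $\mathrm{End}_{\mathbb{F}}(W)$ and so yields the $H_{2}$-action on $W$. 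Neither point affects correctness.
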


\begin{Lemma}[{\bf \cite{derek}, Lemma 2.16 or \cite{Luc5}, Lemma 1.7}] \label{FStarO} Let $R$ be finite with cyclic centre $Z$, and assume that all abelian characteristic subgroups of $R$ are contained in $Z$. Then each noncentral $O_{q}(R)$ is the central product of its intersection with $Z$ and an extraspecial $q$-group $E$, of order $q^{1+2m}$ say. If $q$ is odd, then $E$ has exponent $q$. Any nontrivial absolutely irreducible $E$-module has dimension $q^{m}$, and $R/C_{R}(O_{q}(R))\le q^{2m}.Sp_{2m}(q)$. Also, $O_{q}(R/EZ)$ is trivial. Finally, the action of $R/EZ$ on $EZ/Z$ is completely reducible.\end{Lemma}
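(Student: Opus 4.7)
The statement assembles several structural facts about the normal $q$-subgroup $P := O_q(R)$, which I would establish in turn. Since $P$ is characteristic in $R$, every characteristic subgroup of $P$ is characteristic in $R$, and in particular every characteristic abelian subgroup of $P$ lies in the cyclic group $Z$ and is therefore cyclic. The classical structure theorem for $q$-groups all of whose characteristic abelian subgroups are cyclic (see, e.g., Huppert, \emph{Endliche Gruppen I}, Satz III.13.10) then yields a central product decomposition $P = E \circ C$, with $E$ extraspecial of order $q^{1+2m}$ for some $m$, and $C \leq Z(P) \leq Z$ cyclic, intersecting $E$ in $Z(E)$; identifying $C$ with $P \cap Z$ gives the claimed decomposition. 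For odd $q$, the exponent-$q^2$ extraspecial group has its subgroup $\Omega_1(E)$ of elements of order dividing $q$ equal to a noncyclic elementary abelian $q$-group that is characteristic in $E$, contradicting what we have just shown; hence $E$ has exponent $q$.

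The dimension formula for faithful absolutely irreducible $E$-modules is a standard result in extraspecial character theory: any such module has dimension $\sqrt{|E/Z(E)|} = q^m$. For the embedding $R/C_R(P) \hookrightarrow q^{2m}.Sp_{2m}(q)$, I would observe that $R$ centralises $Z(E) \leq Z$ elementwise, so conjugation on $E$ induces automorphisms trivial on $Z(E)$; these preserve the nondegenerate alternating commutator form on $E/Z(E) \cong \mathbb{F}_q^{2m}$, and the group $\Aut_{Z(E)}(E)$ fits into the short exact sequence $1 \to E/Z(E) \to \Aut_{Z(E)}(E) \to Sp_{2m}(q) \to 1$, giving the required containment. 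For $O_q(R/EZ) = 1$, a maximality argument suffices: since $EZ = PZ$ and $PZ/Z$ is the largest normal $q$-subgroup of $R/Z$ (any such $M/Z$ has a Sylow $q$-subgroup $M_q$ normal in $M$, centralised by $Z$ trivially and normalised by $R$ via a Frattini argument, hence contained in $P$), any nontrivial normal $q$-subgroup of $R/EZ$ would lift to a normal $q$-subgroup of $R/Z$ strictly larger than $PZ/Z$, a contradiction.

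The complete reducibility of the action of $R/EZ$ on $EZ/Z$ I expect to be the main technical obstacle, since triviality of $O_q(R/EZ)$ alone does not ensure semisimplicity of a modular representation. My approach would be to exploit the nondegenerate alternating form on $EZ/Z = E/Z(E)$ induced by the commutator map on $E$: for any $R/EZ$-submodule $M$ the orthogonal complement $M^\perp$ is also an $R/EZ$-submodule, so one would seek a nontrivial $R/EZ$-submodule $M_0$ on which the restricted form is nondegenerate, giving the decomposition $EZ/Z = M_0 \oplus M_0^\perp$ and reducing inductively. Ruling out the possibility that every nontrivial $R/EZ$-submodule is totally isotropic is the crux, and I anticipate this step requiring a Clifford-theoretic analysis that uses the embedding $R/C_R(P) \hookrightarrow q^{2m}.Sp_{2m}(q)$ together with the triviality of $O_q(R/EZ)$ in an essential way.
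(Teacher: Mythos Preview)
The paper does not prove this lemma at all; it is quoted verbatim from \cite{derek} and \cite{Luc5} as a structural input, so there is no in-paper argument to compare your sketch against.

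Two comments on the sketch itself. First, your separate argument for the exponent-$q$ claim does not work as written: for $m\ge 2$ an exponent-$q^{2}$ extraspecial group of order $q^{1+2m}$ contains an exponent-$q$ extraspecial central factor of order $q^{1+2(m-1)}$, so $\Omega_{1}(E)$ is nonabelian and the intended contradiction does not fire; and even for $m=1$, the subgroup $E$ need not be characteristic in $P=O_{q}(R)$, so ``characteristic in $E$'' does not transfer to ``characteristic in $R$''. This is harmless in practice, since the exponent statement is already part of the Hall--Gorenstein classification you invoke (Gorenstein, \emph{Finite Groups}, Theorem~5.4.9); just absorb it into that citation rather than arguing separately.

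Second, you are right that the complete reducibility of $R/EZ$ on $EZ/Z$ is where the genuine content lies, and the symplectic commutator form is indeed the correct tool. Your plan---find an irreducible submodule on which the form is nondegenerate and split off its orthogonal complement---is the standard route taken in \cite{Luc5}; the step you have not yet supplied is precisely the one you flag, namely showing that a minimal $R/EZ$-submodule cannot be totally isotropic, and this is exactly where the hypothesis $O_{q}(R/EZ)=1$ must be brought to bear. So your diagnosis of the difficulty is accurate, but the argument remains to be completed along the lines of the cited references.
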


\begin{Lemma}[{\bf \cite{derek}, Lemma 2.17}]\label{FStarT} Let $R\le GL_{r}(\mathbb{F})$ be finite and completely reducible, and let $L$, $U_{j}$, $u_{j}$, $T_{j}$ and $M_{U_{j}}$ be as in Lemmas \ref{FStarLemma} and \ref{FStarModule}. Assume that $\mathbb{F}$ is a splitting field for all central factors of $L$, and that $L$ acts homogeneously. Then $M_{U_{j}}$ is a tensor product of $u_{j}$ copies of some faithful irreducible $\mathbb{F}[T_{j}]$-module $M_{T_{j}}$, of dimension $t_{j}\ge 2$. Also, writing bars to denote reduction modulo $C_{R}(U_{j})$, $\overline{T_{j}}$ is a nonabelian simple group, and $\overline{T_{j}}^{u_{j}}\le \overline{R}\le (\overline{T_{j}}.A_{j})\wr \Sym(u_{j})$, where $A_{j}$ is the subgroup of $\Out({\overline{T_{j}}})$ that stabilises the module $M_{T_{j}}$.\end{Lemma}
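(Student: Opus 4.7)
The plan is to build the tensor factorisation in three layers (the isotypic decomposition, the central-product decomposition, the diagonal action of $R$) and then read off the embedding into the wreath product by looking at how $R$ permutes the tensor factors.

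First, Lemma \ref{FStarModule} already tells us that $C=(Z(R)\text{-line})\otimes\bigotimes_i M_{q_i}\otimes\bigotimes_j M_{U_j}$. Since $L$ acts homogeneously, the natural $R$-module is a sum of copies of $C$, so $L$ acts faithfully on $C$; and because the factors in the tensor product correspond to commuting central factors of $L$, the $U_j$-action on $C$ is concentrated on $M_{U_j}$, giving us that $M_{U_j}$ is a \emph{faithful} irreducible $\mathbb{F}[U_j]$-module. Now $U_j$ is the central product of $u_j$ copies $T_j^{(1)},\dots,T_j^{(u_j)}$ of $T_j$. Since $\mathbb{F}$ is a splitting field for each central factor of $L$ (and hence for each $T_j^{(i)}$), a standard central-product argument (irreducible modules for a central product of two commuting normal subgroups that act on the whole space are external tensor products of irreducible modules for the factors) yields $M_{U_j}\cong M_1\otimes\dots\otimes M_{u_j}$ with $M_i$ irreducible for $T_j^{(i)}$.

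Next, I would use the transitive permutation action of $R$ on $\{T_j^{(1)},\dots,T_j^{(u_j)}\}$ to force all the $M_i$ to be isomorphic. For any $r\in R$, conjugation sends $T_j^{(i)}$ to $T_j^{(\sigma(i))}$ for some permutation $\sigma$, and twisting the tensor decomposition of $M_{U_j}$ by $r$ reshuffles the tensor factors; since $M_{U_j}$ is (up to isomorphism) the unique irreducible $U_j$-module appearing, the factors in position $i$ and $\sigma(i)$ must be isomorphic as $T_j$-modules after transport. Transitivity of $R$ on the copies then identifies all $M_i$ with a common $\mathbb{F}[T_j]$-module $M_{T_j}$, of some dimension $t_j$. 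Faithfulness of $M_{T_j}$ is inherited from faithfulness of $M_{U_j}$ (a kernel on any factor kernels the whole tensor product because the other copies commute with it). The dimension bound $t_j\ge 2$ is then automatic: a $1$-dimensional representation of a quasisimple (hence perfect) group is trivial, and a trivial $M_{T_j}$ would force $T_j$ to act trivially on $M_{U_j}$, contradicting faithfulness.

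For the final assertions, I would reduce modulo $C_R(U_j)$. Since $Z(T_j)\le Z(U_j)\le C_R(U_j)$, the image $\overline{T_j}$ of $T_j$ is $T_j/Z(T_j)$, which is nonabelian simple by definition of quasisimple. In the quotient $U_j/Z(U_j)$ the central amalgamation disappears, so the images of the $u_j$ commuting copies give a direct product $\overline{T_j}^{u_j}\le\overline{R}$. For the upper bound, the conjugation action of $R$ on $\{T_j^{(1)},\dots,T_j^{(u_j)}\}$ gives a homomorphism $\pi\colon \overline{R}\to\Sym(u_j)$; the kernel $\overline{R_0}$ stabilises each $\overline{T_j^{(i)}}$, hence embeds into $\prod_i\Aut(\overline{T_j^{(i)}})$, and the image in each $\Out(\overline{T_j})$ must stabilise the isomorphism class of $M_{T_j}$ (otherwise the corresponding tensor factor would change), i.e.\ lies in $A_j$. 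Combining, $\overline{R_0}\le(\overline{T_j}.A_j)^{u_j}$ and $\overline{R}\le(\overline{T_j}.A_j)\wr\Sym(u_j)$.

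The main obstacle is the bookkeeping in the middle step: making the central-product tensor decomposition and the $R$-conjugation action genuinely compatible, so that ``transitivity on copies forces isomorphism of tensor factors'' becomes a rigorous statement rather than a slogan. Splitting-field hypotheses are what make this go through cleanly (each $M_i$ is absolutely irreducible, so the tensor decomposition of $M_{U_j}$ is unique up to permutation and isomorphism of the factors), and I would invest most of the writing budget in that verification; the wreath-product embedding at the end is then essentially a direct reading.
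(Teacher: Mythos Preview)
The paper does not give its own proof of this lemma: it is quoted from \cite[Lemma~2.17]{derek} and stated without argument, so there is nothing in the paper to compare your attempt against. That said, your outline is the standard route to such a result and is essentially correct; the three layers you identify (tensor decomposition for the central product over a splitting field, transitivity of $R$ on the quasisimple factors forcing the tensor factors to coincide, and the wreath-product embedding read off from the conjugation action) are exactly what one does.

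One place worth tightening: when you assert that twisting $M_{U_{j}}$ by $r\in R$ gives back an isomorphic $U_{j}$-module, you should make the use of the homogeneity hypothesis explicit. Since $L\unlhd R$, conjugation by $r$ permutes the irreducible $L$-constituents of the natural module; homogeneity of $L$ then forces $C^{r}\cong C$ as $L$-modules, and uniqueness of the tensor decomposition of $C$ (over the splitting field) transfers this to $M_{U_{j}}^{r}\cong M_{U_{j}}$. You allude to this (``the unique irreducible $U_{j}$-module appearing'') but the logical dependence on homogeneity deserves one explicit sentence. Similarly, in the final paragraph the claim that the image in $\Out(\overline{T_{j}})$ must stabilise $M_{T_{j}}$ relies on the same mechanism: if some $r\in R_{0}$ moved $M_{T_{j}}$ to a non-isomorphic module, the resulting tensor product would be a non-isomorphic $U_{j}$-module, contradicting $M_{U_{j}}^{r}\cong M_{U_{j}}$.
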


\begin{Corollary}\label{FStarTCors2} Let $R\le GL_{r}(\mathbb{F})$ be finite and completely reducible, and let $U_{j}$, $u_{j}$, $A_{j}$, $T_{j}$ and $t_{j}$ be as in Lemma \ref{FStarT}. Write bars to denote reduction modulo $C_{R}(T_{j})$.\begin{enumerate}
\item If $\overline{T_{j}}\not\cong P\Omega^{+}_{8}(q)$, with $q$ odd, then $A_{j}\le N.M$, where $|N|\le 2$ and $M$ is metacyclic. If $\overline{T_{j}}\cong P\Omega^{+}_{8}(q)$, with $q$ odd, then $A_{j}\le N.M$, where $N$ is cyclic and $|M|$ divides $24$;
\item Assume that $t_{j}=2$ for some $j$. Then $\overline{R}\le \overline{T_{j}}^{u_{j}}.(C_{2}\wr\Sym{(u_{j})})$.\end{enumerate}\end{Corollary}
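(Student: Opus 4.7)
The plan is to deduce both parts from Lemma \ref{FStarT}, which already places $\overline{R}$ inside $(\overline{T_j}.A_j)\wr \Sym(u_j)$, by combining it with the classification of finite simple groups and the standard description of $\Out(T)$ for each type of simple $T$. The only remaining task is to extract structural information on $A_j$ as the stabiliser of the specific irreducible module $M_{T_j}$; once the list of possible $\overline{T_j}$ is in hand, the proof reduces to a family-by-family check.

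For Part 1, I would case-split on the isomorphism type of $\overline{T_j}$. When $\overline{T_j}$ is alternating or sporadic, $\Out(\overline{T_j})$ has order at most $4$ and is itself metacyclic, so one may take $N=1$. When $\overline{T_j}$ is of Lie type and not isomorphic to $P\Omega_8^+(q)$ with $q$ odd, $\Out(\overline{T_j})$ admits the standard factorisation $D.(\Phi \rtimes \Gamma)$, with $D$ the abelian diagonal group, $\Phi$ the cyclic field group, and $\Gamma$ the graph group, which is cyclic of order at most $2$. The subgroup $D.\Phi$ is normal in $\Out(\overline{T_j})$ with cyclic quotient $\Gamma$, and for each classical and exceptional family one checks that $A_j \cap (D.\Phi)$ is metacyclic (using either that $D$ is itself cyclic or that the intersection $A_j \cap D$ is forced to be small by the module-stabiliser condition); the residual at-most-$2$ contribution from $\Gamma$ is then absorbed into the single factor $N$. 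For the exceptional case $\overline{T_j} \cong P\Omega_8^+(q)$ with $q$ odd, triality forces $\Gamma \cong S_3$ and $|D| \le 4$, so $D.\Gamma$ has order dividing $24$ while $\Phi$ remains cyclic; taking $N := A_j \cap \Phi$ and $M := A_j/N$ gives the required presentation.

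For Part 2, the hypothesis $t_j = 2$ forces $T_j$ to be a quasisimple subgroup of $GL_2$ over some finite extension of $\mathbb{F}$, so by Dickson's classification of the finite subgroups of $PGL_2$ one has $\overline{T_j} \cong PSL_2(q)$ for some prime power $q \ge 4$ (with the usual identifications $A_5 = PSL_2(4) = PSL_2(5)$). Writing $q = p^f$, $\Out(\overline{T_j}) = D \times \Phi$ with $|D| \le 2$ and $\Phi = \langle \phi \rangle$ the cyclic Frobenius field group of order $f$; since $\phi$ sends $M_{T_j}$ to a non-isomorphic Galois twist (the trace function changes under Frobenius), no nontrivial power of $\phi$ lies in $A_j$, so $A_j \le D$ has order at most $2$. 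Substituting into $\overline{R} \le (\overline{T_j}.A_j)\wr \Sym(u_j)$ then yields the claimed inclusion. The main obstacle is the Part 1 bookkeeping across the Lie-type families: for each, one must identify an explicit order-$\le 2$ normal subgroup whose quotient leaves a metacyclic group containing $A_j$, and the separate $D_4$-with-$q$-odd clause is genuinely necessary because triality inserts a non-metacyclic $S_3$ into $\Gamma$ that cannot be removed by discarding a subgroup of order at most $2$.
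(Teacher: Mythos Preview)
Your approach is essentially the same as the paper's, which for Part~1 simply invokes the structure of $\Out(\overline{T_j})$ from \cite[Chapter~2]{KleidLie} (handling the $P\Omega_8^+(q)$, $q$ odd, case via \cite[Theorem~2.1.4 and Proposition~2.7.3]{KleidLie}), and for Part~2 cites \cite[Lemma~2.10]{derek} to obtain $|A_j|\le 2$ directly rather than re-deriving it via Dickson's theorem and the Frobenius-twist argument you sketch. Your more explicit treatment of Part~2 is correct (the splitting-field hypothesis in Lemma~\ref{FStarT} ensures the distinct Frobenius twists of the natural $SL_2(q)$-module remain pairwise non-isomorphic), and your family-by-family outline for Part~1 is exactly what ``examining the structure of $\Out(\overline{T_j})$'' amounts to.
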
 
\begin{proof} If $t_{j}=2$, then \cite[Lemma 2.10]{derek} implies that $|A_{j}|\le 2$, so (ii) immediately follows. If $\overline{T_{j}}$ is not $P\Omega^{+}_{8}(q)$, with $q$ odd, then part (i) follows from examining the structure of $\Out{(\overline{T_{j}})}$ (see \cite[Chapter 2]{KleidLie}). If $\overline{T_{j}}=P\Omega^{+}_{8}(q)$, with $q$ odd, then 
$\Out{(\overline{T_{j}})}$ modulo its cyclic group of field automorphisms, has order $24$,  by \cite[Theorem 2.1.4 and Proposition 2.7.3]{KleidLie}.\end{proof}

\begin{Corollary}\label{FStarCor} Let $F$ be a finite field, let $R\le GL_{r}(\mathbb{F})$ be irreducible and weakly quasiprimitive, let $f=f(R)$, $K_{1}=K_{1}(R)$, let $L$ be the generalised fitting subgroup of $K_{1}$, and let $O_{q_{i}}(K_{1})$, $U_{j}$ be the central factors of $L$ as in Lemma \ref{FStarLemma}. Also, for each $q_{i}$ let $m_{i}$ be as in Lemma \ref{FStarO}, and for each $U_{j}$, let $u_{j}$ and $t_{j}$ be as in Lemma \ref{FStarT}. Then\begin{enumerate}[(i)]
\item $\prod_{i} q_{i}^{m_{i}}\prod_{j} t_{j}^{u_{j}}$ divides $r/f$;
\item Each $q_{i}$ divides $|\mathbb{F}|^{f}-1$;
\end{enumerate}\end{Corollary}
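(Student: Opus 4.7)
The plan is to exploit the subgroup $K_1\le GL_{r/f}(\mathbb{F}_1)$ provided by Lemma \ref{DefLemma}, together with the constraint that every characteristic abelian subgroup of $K_1$ lies in the centre of $GL_{r/f}(\mathbb{F}_1)$.

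For Part (ii), which is the quicker statement, I would proceed as follows. For each $i$, let $E_i$ be the extraspecial subgroup of $O_{q_i}(K_1)$ provided by Lemma \ref{FStarO}, and let $z_i$ generate its cyclic centre $Z(E_i)$ of order $q_i$. Since $z_i$ commutes with all of $E_i$ and with $Z(K_1)_{q_i}$, it lies in $Z(O_{q_i}(K_1))$. This centre is a characteristic abelian subgroup of $K_1$ (since $O_{q_i}(K_1)$ is characteristic in $K_1$), so Lemma \ref{DefLemma} forces it into $Z(GL_{r/f}(\mathbb{F}_1))$. Hence $z_i$ acts as a scalar on $\mathbb{F}_1^{r/f}$, and its order $q_i$ divides $|\mathbb{F}_1^\ast|=|\mathbb{F}|^f-1$.

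For Part (i), let $V=\mathbb{F}_1^{r/f}$ be the natural $K_1$-module, and let $\mathbb{F}_2$ be an extension of $\mathbb{F}_1$ serving as a splitting field for every central factor of $L$. Since $L$ is characteristic in $K_1$ (being the generalised Fitting subgroup) and $K_1$ is weakly quasiprimitive, $L$ acts homogeneously on $V$. Thus $V\cong W^k$ as $\mathbb{F}_1 L$-module for some irreducible $W$, so $\dim_{\mathbb{F}_1} W$ divides $r/f$, and it suffices to show that $d:=\prod_i q_i^{m_i}\prod_j t_j^{u_j}$ divides $\dim_{\mathbb{F}_1} W$. Extending scalars to $\mathbb{F}_2$, the $\mathbb{F}_2 L$-module $W\otimes_{\mathbb{F}_1}\mathbb{F}_2$ decomposes into absolutely irreducible constituents, each of which is a constituent of $V\otimes_{\mathbb{F}_1}\mathbb{F}_2$ and therefore has $\mathbb{F}_2$-dimension exactly $d$ by Lemmas \ref{FStarModule}, \ref{FStarO} and \ref{FStarT}. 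Since $\dim_{\mathbb{F}_2}(W\otimes_{\mathbb{F}_1}\mathbb{F}_2)=\dim_{\mathbb{F}_1} W$, we conclude $d\mid \dim_{\mathbb{F}_1} W$, and hence $d\mid r/f$ as required.

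The main obstacle is the bookkeeping in Part (i): one must verify that the hypotheses of Lemmas \ref{FStarModule} and \ref{FStarT} (homogeneity of the $L$-action, and the splitting-field condition for the central factors of $L$) are really in force, so that the constituent-dimension formula $d=\prod q_i^{m_i}\prod t_j^{u_j}$ applies over $\mathbb{F}_2$ and can be transported back to $\mathbb{F}_1$ via the scalar-extension identity above.
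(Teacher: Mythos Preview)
Your proposal is correct and follows essentially the same approach as the paper. Both arguments establish (ii) by noting that $Z(O_{q_i}(K_1))$ is characteristic abelian in $K_1$ and hence scalar, and establish (i) by passing to a splitting field and invoking the tensor decomposition of Lemmas \ref{FStarModule}, \ref{FStarO}, \ref{FStarT} to compute the dimension of an absolutely irreducible $L$-constituent. The only organisational difference is that you first exploit homogeneity of $L$ over $\mathbb{F}_1$ to reduce to a single irreducible $W$ before extending scalars, whereas the paper extends scalars directly on all of $V$; the paper then dispatches the obstacle you flag (that $L$ may fail to be homogeneous over $\mathbb{F}_2$) by observing that the irreducible constituents over $\mathbb{F}_2$ are algebraic conjugates of one another and so all share the common dimension $e=\prod_i q_i^{m_i}\prod_j t_j^{u_j}$.
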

\begin{proof} By definition, each characteristic abelian subgroup of $K_{1}$ is contained in $Z:=Z(GL_{r}(\mathbb{F}^{f}))$. Since $q_{i}$ divides $|Z(O_{q_{i}}(K_{1}))|$ and $|Z|=|\mathbb{F}|^{f}-1$, part (ii) follows.

Next, extend $\mathbb{F}_{1}:=\mathbb{F}_{|F|^{f}}$ so that $\mathbb{F}_{1}$ is a splitting field for all subgroups of $L$, and let $M_{q_{i}}$ and $M_{U_{j}}$ be as in Lemma \ref{FStarModule}. Then $K_{1}$ may no longer be weakly quasiprimitive. In particular, $L$ may no longer be homogeneous, but its irreducible constituents are algebraic conjugates of one another, so they all have the same dimension $e$. Hence, $e$ divides $r/f$. Since Lemmas \ref{FStarModule}, \ref{FStarO} and \ref{FStarT} imply that $e=\prod_{i} \dim{M_{q_{i}}}\prod_{j} \dim{M_{U_{j}}}=\prod_{i} q_{i}^{m_{i}}\prod_{j} t_{j}^{u_{j}}$, part (i) follows.
\end{proof}

The following is quickly computed using the database of irreducible matrix groups in MAGMA.
\begin{Proposition}\label{83Cor} Let $R\le GL_{r}(p)$ be primitive. If $(r,p)=(4,2),(4,3),(6,2),(6,3)$ or $(8,2)$, then the list of composition factors of $R$ is contained in one of the lists in Table 1 below.\end{Proposition}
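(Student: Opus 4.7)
The proof is essentially a finite computer enumeration, so the plan is organisational rather than mathematical: for each of the five pairs $(r,p) \in \{(4,2),(4,3),(6,2),(6,3),(8,2)\}$, I would enumerate the conjugacy classes of primitive subgroups $R \le GL_r(p)$ and read off their composition factors. The MAGMA database \texttt{IrreducibleMatrixGroupDatabase} contains, up to conjugacy in $GL_r(p)$, all finite irreducible subgroups for exactly the small parameters in question (the underlying classifications being due to Hiss--Malle, Short, and others collated for the MAGMA library); this guarantees completeness of the search.

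The concrete steps are as follows. First, for each pair $(r,p)$, iterate through the irreducible groups in the database and retain only those $R$ that are primitive in the sense of Definition \ref{ImprimitiveDef}. Primitivity can be tested directly: $R$ is primitive iff the natural module $\mathbb{F}_p^r$ admits no system of imprimitivity, which MAGMA exposes via \texttt{IsPrimitive} on a matrix group acting on its natural module, or equivalently via testing that no proper divisor $r/s > 1$ of $r$ admits an $R$-stable direct sum decomposition into $s$ summands. Second, for each retained $R$, compute the composition factors using \texttt{CompositionFactors}, normalise the multiset (sort by isomorphism type), and collect the distinct multisets obtained as $R$ varies. Third, check that each such multiset coincides with one of the entries listed in Table 1.

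The only genuine obstacle is bookkeeping: verifying completeness of the MAGMA database for exactly the five listed parameter pairs (and noting that the exclusion of, e.g., $(8,3)$ is not accidental but reflects the limits of the tabulated classification), and recording the composition factors in a uniform notation compatible with the $\mathbb{ATLAS}$ conventions adopted in the paper. Since the total number of primitive groups across these five cases is small (a few dozen, once one quotients by conjugacy in $GL_r(p)$), a short MAGMA script completes the verification in negligible time, and the output is simply transcribed into Table 1. No mathematical argument beyond invocation of the classification built into the database is needed.
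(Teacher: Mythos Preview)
Your proposal is correct and matches the paper's own approach exactly: the paper simply states that the result ``is quickly computed using the database of irreducible matrix groups in MAGMA,'' and your plan is a fleshed-out version of precisely that computation. There is nothing to add.
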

\begin{tabular}[t]{|c|p{3.5cm}|p{10.5cm}|}
  \hline
  \multicolumn{3}{ |c| }{Table 1}\\ 
  \hline
  $(r,p)$ & Maximum value of $a(R)$ among the primitive irreducible subgroups of $GL_{r}(p)$ & List of composition factors of a primitive irreducible subgroup $R$ of $GL_{r}(p)$ is contained in one of the lists below\\
  \hline\hline
  $(4,2)$ & $4$ & $[2,2,3,5]$ ($f(R)=4$), $[2,A_{5}]$, $[2,A_{6}]$, $[A_{7}]$, $[L_{4}(2)]$\\
  \hline
  $(4,3)$ & $10$ & $[2,2,2,2,2,2,2,2,3,3]$, $[2,2,2,2,2,2,2,5]$, $[2,2,2,2,2,A_{6}]$, $[2,2,2,2,2,2,A_{5}]$, $[2,2,U_{4}(2)]$, $[2,2,L_{4}(3)]$\\
  \hline
  $(6,2)$ & $8$ & $[2,3,3,3,7]$, $[2,2,2,2,3,3,3,3]$, $[2,3,3,S]$, $[3,7,S]$, where $S=L_{2}(7)$, $L_{2}(8)$, $A_{6}$, $A_{7}$, $U_{3}(3)$, $A_{8}$, $L_{3}(4)$, $U_{4}(2)$, $PSp_{6}(2)$ or $L_{6}(2)$ \\
  \hline
  $(6,3)$ & $7$ & $[2,2,2,2,3,7,13]$, $[2,2,2,2,3,3,13]$, $[2,2,2,2,A_{5}]$, $[2,2,2,2,L_{2}(7)]$, $[2,2,2,2,2,A_{6}]$, $[2,L_{2}(p)]$ ($p=11$, $13$), $[2,2,A_{7}]$, $[2,2,2,2,U_{3}(3)]$, $[2,2,2,2,3,L_{3}(3)]$, $[2,2,3,13,L_{2}(27)]$, $[2,2,2,L_{3}(4)]$, $[2,M_{12}]$, $[2,2,2,L_{4}(3)]$, $[2,2,2,2,U_{4}(3)]$, $[2,2,2,2,L_{3}(9)]$, $[2,2,PSp_{6}(3)]$, $[2,2,L_{6}(3)]$ \\
  \hline
  $(8,2)$ & $7$ & $[2,2,2,3,5,17]$ ($f(R)=8$), $[2,2,3,3,5,S]$, $[2,2,3,S,S]$, $[2,3,3,S,S]$, where $S=A_{5}$, $L_{2}(7)$, $A_{6}$, $L_{2}(8)$, $L_{2}(17)$, $L_{2}(16)$, $A_{7}$, $U_{4}(2)$, $A_{8}$, $A_{9}$, $A_{10}$, $PSp_{4}(4)$, $PSp_{6}(2)$, $P\Omega^{+}_{8}(2)$, $P\Omega^{-}_{8}(4)$, $L_{4}(4)$, $PSp_{8}(2)$, $L_{8}(2)$\\
  \hline \end{tabular}

\section{Wreath products}\label{WreathAppSection}
\subsection{Wreath products as permutation and linear groups}
Let $R$ be a finite group, let $S$ be a permutation group of degree $s$, and consider the wreath product $R\wr S$, as constructed in \cite{CamPerm}. Let $B$ be the base group of $R\wr S$, so that $B$ is isomorphic to the direct product of $s$ copies of $R$. Thus, for a subgroup $L$ of $R$, $B$ contains the direct product of $s$ copies of $L$: we will denote this direct product by $B_{L}$ (so that $B_{1}=1$ and $B_{R}=B$). 

Now, for each $1\le i\le s$, set 
$$R_{(i)}:=\{(g_1,\hdots,g_s)\in B\text{ : }g_j=1\text{ for all }j\neq i\}\unlhd B.$$ Then $R_{(i)}\cong R$, and $B=\prod_{1\le i\le s} R_{(i)}$. Furthermore, $N_{R\wr S}(R_{(i)})\cong R_{(i)}\times (R\wr \Stab_{S}(i))$. Hence, we may define the projection maps 
\begin{align}\rho_{i}:N_{R\wr S}(R_{(\gamma)})\rightarrow R_{(i)}.\end{align} We also define $\pi:R\wr S\rightarrow S$ to be the quotient map by $B$. This allows us to define a special class of subgroups of $R\wr S$.
\begin{Definition}\label{LargeDef} A subgroup $G$ of $R\wr S$ is called \emph{large} if \begin{enumerate}[(a)]
\item $G\rho_{i}=R_{(i)}$ for all $i$ in $1\le i\le s$, and;
\item $G\pi=S$.\end{enumerate}\end{Definition}

\begin{Remark}\label{WreathBlockRemark} Suppose, in addition, that $R$ is a finite permutation [respectively irreducible linear] group of degree [resp. dimension] $r\geq 1$ [we exclude $r=1$ in the permutation group case]. If $s>1$ and $G$ is a large subgroup of $R\wr S$, then $G$ is an imprimitive permutation [resp. linear] group of degree $rs$, with a system of $s$ blocks, each of cardinality [resp. dimension $r$]. ($G$ acts on the cartesian product $\{1,\hdots,r\}\times \{1,\hdots,s\}$ in the permutation group case.) \end{Remark}
In fact, it turns out that all imprimitive permutation [resp. linear] groups arise as a large subgroup of a certain wreath product. 

\begin{Theorem}[{\bf \cite{Sup}, Theorem 3.3}]\label{SupPerm} Let $G$ be an imprimitive permutation group on a set $\Omega_1$, and let $\Delta$ be a block for $G$. Also, let $\Gamma:=\Delta^{G}$ be the set of $G$-translates of $\Delta$, and set $\Omega_2:=\Delta\times \Gamma$. Denote by $R$ and $S$ the permutation groups $\Stab_{G}(\Delta)^{\Delta}$, and $G^{\Delta^{G}}$, on $\Delta$ and $\Gamma$ respectively. Then\begin{enumerate}[(i)]
\item $G\cong G^{\Omega_{2}}$ is isomorphic to a large subgroup of $R\wr S$, and;
\item $(G,\Omega_{1})$ and $(G,\Omega_{2})$ are permutation isomorphic.\end{enumerate}\end{Theorem}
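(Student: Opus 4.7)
The plan is to build an explicit embedding $\phi\colon G\hookrightarrow R\wr S$ by choosing coset representatives for the block stabiliser, and then to check both that its image is large and that the induced action on $\Delta\times\Gamma$ is permutation isomorphic to the original action on $\Omega_1$.

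First, I would fix notation. Write $\Gamma=\{\Delta_1,\ldots,\Delta_s\}$ with $\Delta_1=\Delta$, and choose $t_i\in G$ with $\Delta t_i=\Delta_i$, taking $t_1=1$. Let $\bar{g}\in S$ denote the image of $g\in G$ under the quotient $G\to S=G^{\Gamma}$, so that $\Delta_i g=\Delta_{i\bar g}$. For each $g\in G$ and each $i$, the element $t_i\,g\,t_{i\bar g}^{-1}$ stabilises $\Delta$, hence induces an element $r_i(g)\in R=\Stab_G(\Delta)^\Delta$. Now set
\[
\phi(g):=\bigl((r_1(g),\ldots,r_s(g)),\ \bar g\bigr)\in R\wr S.
\]

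Next, I would verify that $\phi$ is a homomorphism. This is a direct calculation: the $i$-th coordinate of $\phi(g)\phi(h)$, computed via the wreath product multiplication rule, uses $r_i(g)\cdot r_{i\bar g}(h)$, and on $\Delta$ this equals the action of $(t_i g t_{i\bar g}^{-1})(t_{i\bar g} h t_{i\overline{gh}}^{-1})=t_i(gh)t_{i\overline{gh}}^{-1}$, i.e.\ the $i$-th coordinate of $\phi(gh)$. Injectivity follows from the fact that $\phi(g)=1$ forces $\bar g=1$ (so $g$ fixes every block setwise) and $r_i(g)=1$ (so $t_i g t_i^{-1}$ acts trivially on $\Delta$, whence $g$ acts trivially on $\Delta_i$ for all $i$, and therefore trivially on $\Omega_1=\bigcup_i\Delta_i$); here I use the standing assumption that $G\le\Sym(\Omega_1)$ acts faithfully. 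To see that $\phi(G)$ is large, part (b) of Definition \ref{LargeDef} is immediate since $\pi\circ\phi$ is the quotient $G\to G^{\Gamma}=S$. For part (a), given any $r\in R$, lift $r$ to some $h\in\Stab_G(\Delta)$; then $\phi(h)$ has first coordinate $r$, and for $i\ne 1$ the element $t_iht_i^{-1}$ lies in $\Stab_G(\Delta_i)$, so by replacing $h$ with a suitable $G$-conjugate one obtains the required surjectivity onto each $R_{(i)}$.

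For (ii), I would define a bijection $\psi\colon\Omega_1\to\Omega_2=\Delta\times\Gamma$ by $\omega\mapsto(\omega t_i^{-1},\Delta_i)$ whenever $\omega\in\Delta_i$; this is well defined because the $\Delta_i$ partition $\Omega_1$. A short computation shows that $\psi(\omega g)=\psi(\omega)\cdot\phi(g)$ using the standard wreath product action of $R\wr S$ on $\Delta\times\Gamma$, so $\psi$ intertwines the $G$-actions on $\Omega_1$ and $\Omega_2$. Part (i) then follows: $G\cong\phi(G)$ acts on $\Omega_2$ as a large subgroup of $R\wr S$, and this action is the one induced by $\psi$ from $(G,\Omega_1)$.

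The only genuinely subtle step is verifying the homomorphism property and the surjectivity of each $\rho_i\circ\phi$ onto $R_{(i)}$; both ultimately reduce to the correct bookkeeping of the coset representatives $t_i$. Everything else is a matter of unpacking the definitions of the wreath product, the projection maps $\rho_i$, $\pi$, and the standard wreath action on $\Delta\times\Gamma$.
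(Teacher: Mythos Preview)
The paper does not give its own proof of this theorem: it is simply quoted from Suprunenko's book \cite{Sup}, so there is nothing to compare against at the level of strategy. Your argument is the standard one and is essentially correct.

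One small slip: in your verification of condition (a) of Definition~\ref{LargeDef} for $i\neq 1$, you write that ``$t_iht_i^{-1}$ lies in $\Stab_G(\Delta_i)$'' for $h\in\Stab_G(\Delta)$. In fact $\Delta_i=\Delta t_i$, so it is $t_i^{-1}ht_i$ that stabilises $\Delta_i$, not $t_iht_i^{-1}$. The correct computation is: given $r\in R$, lift to $h\in\Stab_G(\Delta)$ with $h^{\Delta}=r$, set $g:=t_i^{-1}ht_i\in\Stab_G(\Delta_i)$, and then $\rho_i(\phi(g))=r_i(g)=(t_igt_{i}^{-1})^{\Delta}=h^{\Delta}=r$. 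Your phrase ``replacing $h$ with a suitable $G$-conjugate'' is exactly this, so the conclusion stands once the indices are corrected.
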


If $G$ is an imprimitive permutation group, and the block $\Delta$ as in Theorem \ref{SupPerm} is assumed to be a minimal block for $G$, then the group $R=\Stab_{G}(\Delta)^{\Delta}$ is primitive. When $\Omega$ is finite we can iterate this process, and deduce the following.
\begin{Corollary}\label{PrimCom} Let $G$ be a transitive permutation group on a finite set $\Omega$. Then there exist primitive permutation groups $R_{1}$, $R_{2}$, $\hdots$, $R_{t}$ such that $G$ is a subgroup of $R_{1}\wr R_{2}\wr \hdots\wr R_{t}$.\end{Corollary}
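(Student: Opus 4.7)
The plan is to induct on $|\Omega|$. The base case is when $G$ is already primitive on $\Omega$: simply set $t=1$ and $R_1 = G$, and there is nothing to show.

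For the inductive step, assume $G$ is imprimitive. Since $\Omega$ is finite, we may choose a block $\Delta$ for $G$ of \emph{minimal} cardinality subject to $|\Delta| \ge 2$. I first claim that with this choice, the permutation group $R := \Stab_G(\Delta)^\Delta$ is primitive on $\Delta$. This is because any non-trivial block $\Delta' \subsetneq \Delta$ for the action of $\Stab_G(\Delta)$ on $\Delta$ would also be a block for $G$ on $\Omega$ (a standard fact about imprimitive actions: translates of $\Delta'$ by coset representatives of $\Stab_G(\Delta)$ in $G$ give a $G$-invariant partition of $\Omega$ refining $\Delta^G$). The minimality of $|\Delta|$ then forces $|\Delta'| \in \{1, |\Delta|\}$, so $R$ admits only trivial blocks on $\Delta$ and is primitive.

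Now apply Theorem \ref{SupPerm} to the block $\Delta$: $G$ embeds as a (large) subgroup of $R \wr S$, where $S := G^{\Delta^G}$ is the transitive action of $G$ on the set $\Gamma := \Delta^G$ of $G$-translates of $\Delta$. Since $|\Delta| \ge 2$ we have $|\Gamma| = |\Omega|/|\Delta| < |\Omega|$, so the inductive hypothesis applies to the transitive permutation group $S$ on $\Gamma$: there exist primitive permutation groups $R_2, R_3, \ldots, R_t$ with
$$S \le R_2 \wr R_3 \wr \cdots \wr R_t.$$
Setting $R_1 := R$ (which is primitive, as shown above) and invoking the (right-associative) associativity of the wreath product construction, we get
$$G \le R \wr S \le R \wr (R_2 \wr R_3 \wr \cdots \wr R_t) = R_1 \wr R_2 \wr \cdots \wr R_t,$$
completing the induction.

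The only non-routine step is the verification that choosing $\Delta$ to be a block of minimum cardinality (with $|\Delta|\ge 2$) forces $R = \Stab_G(\Delta)^\Delta$ to be primitive; everything else is a direct application of Theorem \ref{SupPerm} together with the standard fact that iterated wreath products are associative in the sense needed here.
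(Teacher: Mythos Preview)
Your proof is correct and follows exactly the approach indicated in the paper: choose a minimal block $\Delta$ so that $R=\Stab_G(\Delta)^\Delta$ is primitive, apply Theorem~\ref{SupPerm} to embed $G$ in $R\wr S$, and then iterate (equivalently, induct on $|\Omega|$), using the associativity of the wreath product noted in Remark~\ref{WreathRemark}. The paper gives this argument in a single sentence preceding the corollary, and you have simply supplied the routine details.
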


\begin{Remark}\label{WreathRemark} The wreath product construction is associative, in the sense that $R\wr (S\wr T)\cong (R\wr S)\wr T$, so the iterated wreath product in Corollary \ref{PrimCom} is well-defined.\end{Remark}

\subsection{An application of the results in Section \ref{MainModuleSection} to wreath products}
We first make the following easy observation.
\begin{Proposition}\label{ISD} Let $A=T_{1}\times T_{2}\times\hdots\times T_{f}$, where each $T_{i}$ is isomorphic to the nonabelian finite simple group $T$. Suppose that $M\le A$ is a subdirect product of $A$, and suppose that $M'\unlhd M$ is also a subdirect product of $A$. Then $M'=M$.\end{Proposition}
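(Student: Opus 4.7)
The plan is to induct on the number $f$ of simple factors. The base case $f=1$ is immediate, since $M=T_1\cong T$ is simple and $M'$, being subdirect, is nontrivial, so $M'=M$.

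For the induction step, I would let $\pi\colon A\to A':=T_1\times\cdots\times T_{f-1}$ be the projection away from the last coordinate. Both $\pi(M)$ and $\pi(M')$ are subdirect products of $A'$, and $\pi(M')\unlhd \pi(M)$, so the inductive hypothesis gives $\pi(M')=\pi(M)$. This yields $M=M'(M\cap T_f)$, reducing the problem to showing $M\cap T_f\le M'$.

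The key observation is that, for any subdirect subgroup $H\le A$, the intersection $H\cap T_f$ is normal in $T_f$, not merely in $H$: given $t\in T_f$, subdirectness supplies some $h=(h_1,\dots,h_{f-1},t)\in H$, and because the distinct factors $T_i$ and $T_f$ commute elementwise, conjugation of any element of $H\cap T_f$ by $h$ coincides with conjugation by $t$. Applying this to $H=M$ and $H=M'$ and using that $T_f$ is simple, each of $M\cap T_f$ and $M'\cap T_f$ is either trivial or all of $T_f$. If $M\cap T_f=1$ we are done, so we may assume $T_f\le M$.

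The only genuine obstacle is then to exclude the case $T_f\le M$ and $M'\cap T_f=1$. I would handle this by a commutator argument: for any $t\in T_f\le M$ and any $m'\in M'$ with $\pi_f(m')=s$ (arbitrary in $T_f$ by subdirectness of $M'$), the element $[t,m']=tm't^{-1}(m')^{-1}$ lies in $M'$ (as $T_f$ normalises $M'$) and a coordinatewise computation shows it equals $[t,s]$, viewed in $T_f$. Hence $[T_f,T_f]\le M'\cap T_f=1$, contradicting the nonabelianness of $T$. Therefore $M'\cap T_f=T_f$, and combined with $\pi(M')=\pi(M)$ this gives $M'=M$, completing the induction.
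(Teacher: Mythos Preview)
Your proof is correct and uses the same inductive skeleton as the paper's. The difference lies in how the inductive step is organised. You always project away the fixed factor $T_f$, apply induction to obtain $\pi(M')=\pi(M)$, and then need the commutator argument to rule out the residual case $T_f\le M$ with $M'\cap T_f=1$. The paper instead first notes (as you do) that each $M\cap T_i$ is normal in $T_i$, hence equals $1$ or $T_i$; but rather than fixing $i=f$, it branches on $M$: if every $M\cap T_i=T_i$ then $M=A$, and the result follows from the description of normal subgroups of $A$ as products $\prod_{i\in Y}T_i$; otherwise some $M\cap T_i=1$, whence automatically $M'\cap T_i\le M\cap T_i=1$, and projecting away that particular $T_i$ plus induction finishes at once, with no commutator computation required. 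The paper's adaptive choice of index is slightly slicker, while your fixed choice plus commutator trick is a touch more self-contained in that it never appeals to the normal-subgroup structure of the full direct product.
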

\begin{proof} We prove the claim by induction on $f$, and the case $f=1$ is trivial, so assume that $f>1$. Since $M$ is subdirect, each $M\cap T_{i}$ is normal in $T_{i}$. If $M=A$, then since the only normal subgroups of $A$ are the groups $\prod_{i\in Y} T_{i}$, for $Y\subseteq \{1,\hdots,f\}$, the result is clear. So assume that $M\cap T_{i}=1$ for some $i$. Then $M'\cap T_{i}=1$, and $M'T_{i}/T_{i}$ and $MT_{i}/T_{i}$ are subdirect products of $\prod_{j\neq i} T_{j}$. It follows, using the inductive hypothesis, that $M'T_{i}=MT_{i}$. Hence $M'=M$, since $M\cap T_{i}=1$, and the proof is complete. \end{proof}  

We will now fix some notation which will be retained for the remainder of the section.\begin{itemize}
\item Let $R$ be a finite group (we do not exclude the case $R=1$).
\item Let $S$ be a transitive permutation group of degree $s\ge 2$.
\item Let $G$ be a large subgroup of the wreath product $R\wr S$ (see Definition \ref{LargeDef}).
\item Write $B:=R_{(1)}\times R_{(2)}\times\hdots\times R_{(s)}$ for the base group of $R\wr S$.
\item write $\pi:G\rightarrow S$ for the projection homomorphism onto the top group.
\item Let $H:=N_{G}(R_{(1)})=\pi^{-1}(\Stab_S(1))$.
\item Let $\Omega:=H\backslash G$.
\item Let $K:=G\cap B=\core_G(H)=\Ker_G(\Omega)$. \end{itemize}
Recall that for a subgroup $N$ of $R$, $B_N\cong N^s$ denotes the direct product of the distinct $S$-conjugates of $N$. In particular, if $N\unlhd R$, then $B_{N}\unlhd R\wr S$. Throughout, we will view $R$ as a subgroup of $B$ by identifying $R$ with $R_{(1)}$. We also note that 
\begin{itemize}
\item $|G:H|=s$; and
\item $S=G^{\Omega}$.\end{itemize}
In particular, the notation is consistent with the notation introduced at the beginning of Section \ref{MainModuleSection}.
 
\begin{Remark}\label{NEWREM} If $R$ is a transitive permutation group, acting on a set $\Delta$, then $G$ is an imprimitive permutation group acting on the set $\Delta\times \{1,2,\hdots,s\}$, and $H=\Stab_G((\Delta,1))$. Furthermore $H^{\Delta}=R$, since $G$ is large (see Remark \ref{WreathRemark}).\end{Remark}

Our strategy for proving Theorems \ref{TransInvTheorem} and \ref{CompRedTheorem} can now be summarised as follows:\begin{description}
\item[Step 1:] Show that $K$ is ``built" from induced modules for $G$, and non-abelian $G$-chief factors.
\item[Step 2:] Derive bounds on $\di(G)$ in terms of the factors from Step 1 and $\di(S)$.
\item[Step 3:] Use the results from Section \ref{MainModuleSection}, to bound the contributions from the factors in Step 1 to the bound from Step 2.
\item[Step 4:] Use induction/previous results to bound $\di(S)$.\end{description}

We begin with Step 1. 
\begin{Lemma}\label{prechief} Suppose that $R>1$ and that $1:=N_0\le N_1\le\hdots\le N_e=R$ is a normal series for $R$, where each factor is either elementary abelian, or a nonabelian chief factor of $R$. Consider the corresponding normal series $1:=G\cap B_{N_0}\le G\cap B_{N_1}\le\hdots\le G\cap B_{N_e}=G$ for $G$. Let $V_i:={N_{i}}/{N_{i-1}}$ and $M_i:=G\cap B_{N_i}/G\cap B_{N_{i-1}}$. \begin{enumerate}[(i)]
\item If $V_i$ is elementary abelian, then $M_i$ is a submodule of the induced module $V_i\uparrow^G_H$.
\item If $V_i$ is a nonabelian chief factor of $R$, then $M_i$ is either trivial, or a nonabelian chief factor of $G$.
\end{enumerate}\end{Lemma}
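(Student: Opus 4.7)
The plan is to analyse each factor $M_i$ by embedding it into the corresponding quotient $Q_i:=B_{N_i}/B_{N_{i-1}}$ of the base group. Since each $N_j\lhd R$, we have $B_{N_j}\lhd R\wr S$, so $B_{N_{i-1}}\lhd B_{N_i}$ and $Q_i$ decomposes naturally as the direct product $V_i^{(1)}\times\cdots\times V_i^{(s)}$ of $s$ copies of $V_i=N_i/N_{i-1}$. The group $G$ acts on $Q_i$ in two compatible ways: it permutes the $s$ coordinates through $\pi:G\to S$, and the stabiliser $H$ of the first coordinate acts on $V_i^{(1)}$ via the surjection $\rho_1:H\twoheadrightarrow R_{(1)}=R$ (which is onto because $G$ is large). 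Because $(G\cap B_{N_i})\cap B_{N_{i-1}}=G\cap B_{N_{i-1}}$, the inclusion $G\cap B_{N_i}\hookrightarrow B_{N_i}$ induces an injection $M_i\hookrightarrow Q_i$, identifying $M_i$ with a $G$-invariant subgroup.

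For Part (i), when $V_i$ is elementary abelian the description above is exactly the definition of an induced module: $Q_i\cong V_i\uparrow^G_H$ as $G$-modules, where $V_i$ carries its natural $H$-module structure coming from the $R$-action. Thus $M_i$ is a submodule.

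For Part (ii), write $V_i\cong T^k$ with $T$ nonabelian simple, so that $Q_i$ is a direct product of $ks$ copies of $T$. Assuming $M_i\neq 1$, the projection $\rho_1(M_i)\le V_i$ is $H$-invariant, and therefore $R$-invariant (since $H\rho_1=R$); as $V_i$ is a chief factor of $R$, we conclude that $\rho_1(M_i)=V_i$. Transitivity of $S$ on $\{1,\ldots,s\}$ then forces $\rho_j(M_i)=V_i^{(j)}$ for every $j$, so $M_i$ is a subdirect product of $T^{ks}$. Now let $L/(G\cap B_{N_{i-1}})$ be any nontrivial $G$-invariant subgroup of $M_i$. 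The same projection argument applied to $L$ shows that its image in $Q_i$ is also a subdirect product of $T^{ks}$; since it is normal in $M_i$, Proposition \ref{ISD} (applied with $A=T^{ks}$) forces $L/(G\cap B_{N_{i-1}})=M_i$. Hence $M_i$ is a minimal normal nonabelian subgroup of $G/(G\cap B_{N_{i-1}})$, i.e., a nonabelian chief factor of $G$. The main technical point is the appeal to Proposition \ref{ISD} in the nonabelian case; Part (i) and the identification of $Q_i$ with an induced module are essentially formal once the large-subgroup hypothesis is unpacked.
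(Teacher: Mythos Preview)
Your proof is correct and is precisely the argument the paper has in mind. Note that the paper actually states Lemma~\ref{prechief} without proof; Proposition~\ref{ISD} is placed immediately before it for exactly the purpose you use it, namely to show that a $G$-normal subdirect subgroup of $M_i\le T^{ks}$ must equal $M_i$ in the nonabelian case. Your identification of $Q_i=B_{N_i}/B_{N_{i-1}}$ with the induced module $V_i\uparrow^G_H$ via the large-subgroup hypothesis (so that $H\rho_1=R$ and $S$ acts transitively on the $s$ summands) is the intended reading, and the rest is, as you say, formal.

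One small remark: the paper's displayed series ends with ``$G\cap B_{N_e}=G$'', which is a typo (it should be $G\cap B$); this does not affect either your argument or the subsequent use of the lemma, since only the factors $M_i$ matter.
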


For the remainder of this section, suppose that $1:=N_0\le N_1\le\hdots\le N_e=R$ is a chief series for $R$, and let $V_i:={N_{i}}/{N_{i-1}}$ and $M_i:=G\cap B_{N_i}/G\cap B_{N_{i-1}}$. If $V_i$ is abelian we will also write $|V_i|=p_{i}^{a_{i}}$, for $p_i$ prime. 

We now have Step 2. 
\begin{Corollary}\label{chiefpreCor} We have
$$\di(G)\le \sum_{V_i\text{ abelian}} d_G(M_i)+2\nab{(R)}+\di(S)$$
\end{Corollary}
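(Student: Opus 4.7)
The plan is to apply Lemma \ref{diMin} and Theorem \ref{MinTheoremInv} iteratively along the normal series
\[ 1 = G_0 \unlhd G_1 \unlhd \dots \unlhd G_e = K \unlhd G, \qquad G_i := G \cap B_{N_i}. \]
The first thing to verify is that each $G_i$ is normal in $G$ (not merely in $K$). This holds because $N_i \unlhd R$ implies $B_{N_i} = N_i^s \unlhd B$, and since $S$ permutes the direct factors $R_{(j)}$ and each $N_i$-component is the same, $B_{N_i}$ is stabilised by the top group; hence $B_{N_i} \unlhd R\wr S$ and so $G_i = G \cap B_{N_i} \unlhd G$. Note also that $G/G_e = G/K \cong G\pi = S$, so $\di(G/G_e) = \di(S)$.

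The main step is to bound $\di(G/G_{i-1}) - \di(G/G_i)$ according to the nature of $M_i = G_i/G_{i-1}$, by applying Lemma \ref{diMin} to the pair $(G/G_{i-1}, M_i)$. If $V_i$ is elementary abelian, then by Lemma \ref{prechief}(i) $M_i$ is an abelian submodule of an induced $G$-module, so Lemma \ref{diMin}(ii) gives
\[ \di(G/G_{i-1}) \le \di(G/G_i) + d_{G/G_{i-1}}(M_i) = \di(G/G_i) + d_G(M_i), \]
where the last equality holds because the $G$-action on $M_i$ factors through $G/G_{i-1}$ (in fact through $G/G_i$). If $V_i$ is a nonabelian chief factor of $R$, Lemma \ref{prechief}(ii) tells us $M_i$ is either trivial (contributing $0$) or a nonabelian chief factor of $G$, i.e.\ a nonabelian minimal normal subgroup of $G/G_{i-1}$; in the latter case Theorem \ref{MinTheoremInv} yields
\[ \di(G/G_{i-1}) \le \di(G/G_i) + 2. \]

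Telescoping from $i = 1$ to $i = e$ produces
\[ \di(G) = \di(G/G_0) \le \di(G/G_e) + \sum_{V_i \text{ abelian}} d_G(M_i) + 2\,\bigl|\{i : V_i \text{ nonabelian},\ M_i \neq 1\}\bigr|. \]
The last term is bounded by $2\nab(R)$, since the number of nonabelian factors $V_i$ in the given chief series of $R$ is exactly $\nab(R)$. Substituting $\di(G/G_e) = \di(S)$ gives the stated inequality.

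I don't expect any real obstacle: the only things to be careful with are the normality of $G_i$ in $G$ (handled above) and the identification of $d_{G/G_{i-1}}(M_i)$ with $d_G(M_i)$, which is immediate from the definition of a $G$-generating set since the $G$-action on $M_i$ passes to the quotient. Everything else is a bookkeeping exercise in applying the two cited lemmas to the filtration supplied by Lemma \ref{prechief}.
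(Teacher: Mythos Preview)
Your proof is correct and follows essentially the same approach as the paper: iterated application of Lemma~\ref{diMin} (abelian case) and Theorem~\ref{MinTheoremInv} (nonabelian case) along the filtration $G_i = G\cap B_{N_i}$. The only cosmetic difference is that the paper packages the iteration as an induction on $|R|$, using the observation that $G/M_1$ is a large subgroup of $(R/V_1)\wr S$, whereas you telescope directly; your version is arguably cleaner since it avoids that auxiliary observation.
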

\begin{proof} We will prove the corollary by induction on $|R|$. If $|R|=1$ then the bound is trivial, since $G\cong S$ in that case, so assume that $|R|>1$, and note that 
\begin{align}\label{ASTR} G/M_1\text{ is a large subgroup of }(R/V_1)\wr S.\end{align}
Suppose first that $V_1$ is abelian. Then $M_1$ is a $G$-module, so
\begin{align*} d(G)\le d_{G}(M_1)+ \di(G/M_1).\end{align*}
by Lemma \ref{diMin}. Since $\nab{(R)}=\nab{(R/V_1)}$, (\ref{ASTR}) and the inductive hypothesis give the result.

So we may assume that $V_1$ is nonabelian. Then $M_1$ is either trivial or a minimal normal subgroup of $G$, by Lemma \ref{prechief} Part (ii). Hence, $\di(G)\le \di(G/M_1)+2$ by Lemma \ref{diMin}. The result now follows, again from (\ref{ASTR}) and the inductive hypothesis.\end{proof}     

Before stating our next corollary, we refer the reader to Definition \ref{EDEF} for a reminder of the definitions of the functions $E$ and $E_{sol}$. The next two corollaries deal with Step 3.
\begin{Corollary}\label{chief} Define $E'$ to be $E_{sol}$ if $S$ contains a soluble transitive subgroup, and $E':=E$ otherwise. Then\begin{enumerate}[(i)]
\item $\di(G)\le \sum_{V_i\text{ abelian}} a_iE'(s,p_{i})+2c_{nonab}{(R)}+d(S)$.
\item Suppose that $|R|=2$ and $s=2^{m}q$, where $q$ is odd, and that $S$ has a tuple of primitive components $X=(R_2,\hdots,R_t)$, where $\bl_{X,2}(S)\ge 1$. Let $\Gamma$ be a full set of blocks for $S$ of size $2^{\bl_{X,2}(S)}$, and set $\widetilde{S}:=S^\Gamma$. Then $$\di(G)\le \sum_{i=0}^{\bl_{X,2}(S)} E'(2^{m-i}q,2)+\di(\widetilde{S}).$$\end{enumerate}
\end{Corollary}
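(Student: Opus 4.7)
This is essentially a direct combination of the two preparatory results. By Lemma \ref{prechief}(i), each $M_i$ associated to an abelian chief factor $V_i$ is a submodule of the induced $G$-module $V_i\uparrow^G_H$, so Corollary \ref{pmodlemmaMainCor}(i) gives $d_G(M_i)\le a_i E'(s,p_i)$. The definition of $E'$ here matches that in the statement because $G^\Omega=S$ by construction. Substituting these bounds into Corollary \ref{chiefpreCor} yields (i) immediately.

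\textbf{Plan for (ii).} Since $|R|=2$, the chief series of $R$ is just $1<R$, contributing a single abelian factor of $\mathbb{F}_2$-dimension $a_1=1$, and $\nab(R)=0$. A direct invocation of (i) would give only $\di(G)\le E'(s,2)+\di(S)$, so the improvement must come from exploiting the nested $2$-block structure of $S$ encoded by $X$ and $\bl:=\bl_{X,2}(S)$.

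By the definition of the primitive component tuple $X=(R_2,\ldots,R_t)$ of $S$, together with $\bl\ge 1$, one obtains a chain of $S$-block systems $\Gamma_0\prec\Gamma_1\prec\cdots\prec\Gamma_{\bl}=\Gamma$, with $|\Gamma_i|=2^{m-i}q$ and each primitive constituent between $\Gamma_i$ and $\Gamma_{i+1}$ equal to $C_2$. Invoking associativity of wreath products (Remark \ref{WreathRemark}), we may realise $G$ as a large subgroup of
\begin{equation*} C_2\wr C_2\wr\cdots\wr C_2\wr\widetilde{S}\end{equation*}
with exactly $\bl+1$ iterated copies of $C_2$ preceding $\widetilde{S}$. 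The plan is to peel off one $C_2$-layer at a time: at stage $i$ (for $0\le i\le \bl$) the current group sits in $C_2\wr S_i$ where $S_i=S^{\Gamma_i}$ is a transitive permutation group of degree $2^{m-i}q$, and Corollary \ref{chiefpreCor} together with Corollary \ref{pmodlemmaMainCor}(i) contribute $E'(2^{m-i}q,2)$ to the bound. After $\bl+1$ such steps the residual top group is $\widetilde{S}$, contributing $\di(\widetilde{S})$; summing gives the claim.

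The main obstacle is bookkeeping: verifying that the iterated wreath-product embedding above really exists with the stated block sizes, and that the ``soluble transitive subgroup'' condition governing the choice of $E'$ versus $E$ is inherited by each quotient $S_i=S^{\Gamma_i}$, so that the $E'$ function used at each level is genuinely the one specified in the statement.
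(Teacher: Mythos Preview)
Your proposal is correct and follows essentially the same approach as the paper. For (i) you combine Corollary \ref{chiefpreCor} with Corollary \ref{pmodlemmaMainCor}(i) exactly as the paper does; for (ii) the paper also peels off one $C_2$-layer at a time, phrased as an induction on $\bl_{X,2}(S)$ together with Theorem \ref{SupPerm} to realise $S$ as a large subgroup of $C_2\wr S_2$, and your bookkeeping concern about inheritance of the ``soluble transitive subgroup'' condition is handled because any soluble transitive subgroup of $S$ projects to a soluble transitive subgroup of each $S^{\Gamma_i}$ (and if $S$ has none, then $E'=E\ge E_{sol}$ anyway).
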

\begin{proof} By Corollary \ref{chiefpreCor}, we have
$$\di(G)\le \sum_{V_i\text{ abelian}} d_G(M_i)+2\nab{(R)}+\di(S).$$
Now, by Corollary \ref{pmodlemmaMainCor}, $d_G(M_i)\le a_iE'(s,p_i)$. This proves (i). 

So we consider Part (ii). We will show that 
\begin{align}\label{MAD} \di(S)\le\sum_{i=1}^{\bl_{X,2}(S)} E(2^{m-i}q,2)+\di(\widetilde{S})\end{align}
by induction on $\bl_{X,2}(S)$. The result will then follow, since $d(G)\le E'(2^mq,2)+d(S)$ by Part (i). Now, by hypothesis, $S$ has a tuple of primitive components $X=(R_2,\hdots,R_t)$. Also, $|R_2|=2$ since $\bl_{X,2}(S)\ge 1$. Hence, by Theorem \ref{SupPerm}, $S$ is a large subgroup of a wreath product $R_2\wr S_2$, where either $S_2=1$, or $S_2$ is a transitive permutation group of degree $2^{m-1}q$, with a tuple $Y:=(R_3,\hdots,R_t)$ of primitive components. If $S_2=1$ then the result follows, since $s=4$ and $\widetilde{S}=1$ in that case. So assume that $S_2>1$. By Part (i), we have 
\begin{align}\label{Man} \di(S)\le E'(2^{m-1}q,2)+\di(S_2)\end{align}
If $\bl_{X,2}(S)=1$ then $S_2=\widetilde{S}$ and (\ref{MAD}) follows from (\ref{Man}). So assume that $\bl_{X,2}(S)>1$. Then $\bl_{Y,2}(S_2)=\bl_{X,2}(S)-1\ge 1$. The inductive hypothesis then yields $\di(S_2)\le \sum_{i=1}^{\bl_{Y,2}(S_2)} E(2^{m-1-i}q,2)+\di(\widetilde{S})= \sum_{i=2}^{\bl_{X,2}(S)} E(2^{m-i}q,2)+\di(\widetilde{S})$. The bound (\ref{MAD}) now follows immediately from (\ref{Man}), which completes the proof.
\end{proof}

The following is immediate from  Corollaries \ref{pq} and \ref{chief} Part (i).
\begin{Corollary}\label{37} Suppose that $|R|\geq 2$. We have
\begin{align*}\di(G) &\ll \frac{\ab(R)s}{\sqrt{\log{s}}}+2\nab{(R)}+\di(S)\le \frac{a(R)s}{\sqrt{\log{s}}}+\di(S). \end{align*}
\end{Corollary}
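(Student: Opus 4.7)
The plan is to read off the bound from Corollary \ref{chief}(i) and then insert the asymptotic estimates for $E'(s,p)$ furnished by Corollary \ref{pq}(iii). Explicitly, Corollary \ref{chief}(i) yields
$$\di(G) \le \sum_{V_i\text{ abelian}} a_i\, E'(s,p_i) + 2\nab(R) + \di(S).$$
Since $E'(s,p) \le E(s,p)$ in both cases ($E'=E_{sol}$ or $E'=E$), by the proposition comparing $E_{sol}$ and $E$, I would replace each $E'(s,p_i)$ by $E(s,p_i)$ and apply Corollary \ref{pq}(iii). Both branches of that corollary give $E(s,p_i) \ll s/\sqrt{\log s}$ with an absolute implied constant (the first branch, valid only for $2 \le s \le 1260$, is in fact absolutely bounded).

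For the main summation, I note that by the definition of $\ab(R)$ the sum $\sum_{V_i\text{ abelian}} a_i$ over all abelian chief factors of $R$ (with $|V_i| = p_i^{a_i}$) equals the total number of abelian composition factors of $R$, which is precisely $\ab(R)$. Thus
$$\sum_{V_i\text{ abelian}} a_i\, E'(s,p_i) \;\ll\; \frac{\ab(R)\, s}{\sqrt{\log s}},$$
which, substituted into the displayed bound above, delivers the first of the two claimed inequalities.

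For the second inequality I would absorb $2\nab(R)$ into the main term. Since every non-abelian chief factor of $R$ is a direct product of at least one non-abelian simple group, $\nab(R) \le \anab(R)$. Moreover, the elementary estimate $s \ge 2\sqrt{\log s}$ holds for every $s \ge 2$ (verify at $s=2$, where it is an equality, and extend by monotonicity), giving $2\nab(R) \le \anab(R)\, s/\sqrt{\log s}$. Combining this with the identity $\ab(R) + \anab(R) = a(R)$ then finishes the proof. There is no genuine obstacle here; the only minor points worth tracking are the uniformity of the Vinogradov constant in Corollary \ref{pq}(iii) across its two regimes in $s$, and the verification of the bound $s \ge 2\sqrt{\log s}$ throughout the range $s \ge 2$.
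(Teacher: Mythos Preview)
Your proposal is correct and follows exactly the route the paper intends: the paper states that the corollary ``is immediate from Corollaries \ref{pq} and \ref{chief} Part (i)'', and you have spelled out precisely that deduction, including the identification $\sum_{V_i\text{ abelian}} a_i = \ab(R)$ and the absorption of $2\nab(R)$ into $\anab(R)\cdot s/\sqrt{\log s}$ via $s/\sqrt{\log s}\ge 2$ for $s\ge 2$.
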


The next corollary will be key in our proof of Theorem \ref{TransInvTheorem} when $G$ is imprimitive with minimal block size $4$. 
\begin{Corollary}\label{S4NewBoundCor} Assume that $R=S_{4}$ or $R=A_{4}$. Define $E'$ to be $E_{sol}$ if $S$ contains a soluble transitive subgroup, and $E':=E$ otherwise. Then 
$$\di(G)\le E'(s,2)+\min\left\{\frac{bs}{\sqrt{\log{s_2}}},\frac{s}{s_3}\right\} +E'(s,3)+\di(S).$$
\end{Corollary}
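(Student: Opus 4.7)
I would follow the recipe of Corollary \ref{chief}. For $R\in\{A_{4},S_{4}\}$ the chief series $1<V_{4}<A_{4}\le R$ has only abelian factors --- $V_{4}$ (of dimension $2$ over $\mathbb{F}_{2}$), $C_{3}\cong A_{4}/V_{4}$, and (when $R=S_{4}$) $C_{2}\cong S_{4}/A_{4}$ --- so $\nab(R)=0$ and Corollary \ref{chiefpreCor} reduces the problem to
\[\di(G)\le d_{G}(M_{1})+d_{G}(M_{2})+d_{G}(M_{3})+\di(S),\]
where $M_{i}$ is the section of $G\cap B$ corresponding to the $i$-th chief factor, with $M_{3}=1$ in the $A_{4}$ case (in which the $E'(s,2)$ term in the conclusion is merely harmless padding).

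For the one-dimensional factors $M_{2}$ and $M_{3}$, Corollary \ref{pmodlemmaMainCor}(i) immediately yields $d_{G}(M_{2})\le E'(s,3)$ and $d_{G}(M_{3})\le E'(s,2)$. The crucial step is the $V_{4}$ factor: since $V_{4}$ has dimension $2$, the naive application of Corollary \ref{pmodlemmaMainCor}(i) would only give $d_{G}(M_{1})\le 2E'(s,2)$, which is too weak. I would refine this using Corollary \ref{pmodlemmaMainCor}(ii) with $q=3$, the key observation being that $R_{(1)}/V_{4}$ contains a Sylow $3$-subgroup acting regularly on the three non-identity elements of $V_{4}$. Once one checks that a Sylow $3$-subgroup of $K=G\cap B$ projects (via $\rho_{1}$) onto this $C_{3}$, the hypothesis of Corollary \ref{pmodlemmaMainCor}(ii) is satisfied and delivers
\[d_{G}(M_{1})\le \min\left\{\frac{bs}{\sqrt{\log s_{2}}},\frac{s}{s_{3}}\right\};\]
summing the three estimates then produces the claimed inequality.

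The main obstacle is verifying that a Sylow $3$-subgroup of $K$ really does act transitively on $V_{4}\setminus\{0\}$. Since $K\rho_{1}$ is a normal subgroup of $R_{(1)}$, this reduces to showing $K\rho_{1}\not\le V_{4}$, which holds generically by the largeness of $G$. In the exceptional case $K\rho_{1}\le V_{4}$ one has $K\le B_{V_{4}}$, which forces $M_{2}=M_{3}=1$; the required bound then collapses to $d_{G}(M_{1})\le E'(s,2)+\min\{bs/\sqrt{\log s_{2}},s/s_{3}\}+E'(s,3)$, and this follows from the naive estimate $d_{G}(M_{1})\le 2E'(s,2)$ together with the inequality $E'(s,2)\le bs/\sqrt{\log s_{2}}$ supplied by Theorem \ref{BKGARTheorem}.
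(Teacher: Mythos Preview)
Your proposal is correct and follows essentially the same strategy as the paper: split according to the size of $K^{\Delta}=K\rho_{1}$, use Corollary~\ref{pmodlemmaMainCor}(ii) with $(p,q)=(2,3)$ when a $3$-element of $K^{\Delta}$ permutes $V_{4}^{\ast}$ transitively, and fall back on the naive bound $d_{G}(M_{1})\le 2E'(s,2)$ together with vanishing of higher $M_{i}$ otherwise. The only difference is the placement of the case $K^{\Delta}=A_{4}$: the paper groups it with $K^{\Delta}=V_{4}$ (exploiting $M_{3}=1$), whereas you group it with $K^{\Delta}=S_{4}$ (exploiting Sylow-$3$ transitivity); both routes are valid.

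One small gap: in your exceptional case you need $E'(s,2)\le \min\{bs/\sqrt{\log s_{2}},\,s/s_{3}\}$, not merely $E'(s,2)\le bs/\sqrt{\log s_{2}}$. The missing half $E'(s,2)\le s/s_{3}$ is immediate from the definitions (for $E$, note $\lpp(s/s_{2})\ge s_{3}$; for $E_{sol}$, note $s_{2}s_{3}\mid s$), and the paper records exactly this inequality. Also, your citation of Theorem~\ref{BKGARTheorem} only covers the $E_{sol}$ case; when $E'=E$ the bound $E(s,2)\le bs/\sqrt{\log s_{2}}$ comes straight from Definition~\ref{EDEF}.
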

\begin{proof} We have
\begin{align}\label{hat}\di(G)\le d_G(M_1)+d_G(M_2)+d_G(M_3)+\di(S)\le 2E'(s,2)+E'(s,3)+d_G(M_3)+\di(S) \end{align}
by Corollaries \ref{chiefpreCor} and \ref{pmodlemmaMainCor} Part (i). Let $\Delta:=\{1,2,3,4\}$, so that $R$ is transitive on $\Delta$. We have $V_1\cong 2^{2}$, $V_2\cong 3$, and $V_3\cong 2$ if $R\cong S_4$. Since $K^{\Delta}$ is a normal subgroup of $H^{\Delta}=R$ (see Remark \ref{NEWREM}), $K^{\Delta}$ is isomorphic to either $2^{2}$, $A_4$, or $S_4$. In the first two cases $M_{3}$ is trivial. Hence, since 
$$E'(s,2)\le \min\left\{\frac{bs}{\sqrt{\log{s_2}}},\frac{s}{s_3}\right\},$$the result follows from (\ref{hat}).

So we may assume that $K^{\Delta}\cong S_4$. Then a Sylow $3$-subgroup $P_3$ of $K^{\Delta}$ acts transitively on the non-identity elements of $V_1$. Thus, $\chi(P_{3}\cap K,V_1^\ast)=1$, so
$$d_G(M_1)\le \min\left\{\frac{bs}{\sqrt{\log{s_2}}},\frac{s}{s_3}\right\}$$
by Corollary \ref{pmodlemmaMainCor} Part (ii), with $(p,q):=(2,3)$. The result now follows after applying Corollary \ref{pmodlemmaMainCor} Part (i) to $d_G(M_2)$ and $d_G(M_3)$.\end{proof}

\begin{Corollary}\label{6CaseCor} Let $s\ge 2$, let $S\le \Sym{(s)}$ be transitive, and let $R\le GL_{6}(2)$ be primitive and irreducible, with the following properties:\begin{enumerate}
\item $R$ has a characteristic subgroup $K_{1}$, such that $Z:=Z(K_{1})$ has order dividing $3$, and $\overline{K_{1}}:=K_{1}/Z$ has shape shape $N.X$, with $N\unlhd \overline{K_{1}}$ elementary abelian of order $3^{2}$, and $X\le Sp_{2}(3)$ completely reducible;
\item $|R/K_{1}|\le 2$.\end{enumerate}
Let $G$ be a large subgroup of the wreath product $R\wr S$, and let $Y\unlhd X$ be the induced action of $(G\cap K_{1}^{s})/(G\cap Z^{s})$ on $N$. Then $\di(G)\le E(s,2)+E(s,2)+\min\left\{\lfloor bs/\sqrt{\log{s_{2}}}\rfloor,s/s_{3}\right\}+E(s,3)+\min\left\{\lfloor bs/\sqrt{\log{s_{3}}}\rfloor,s/s_{2}\right\}+E(s,3)+\di(S)$, unless $|Y|=8$, in which case $\di(G)\le 4E(s,2)+\min\left\{\lfloor bs/\sqrt{\log{s_{3}}}\rfloor,s/s_{2}\right\}+E(s,3)+\di(S)$. Furthermore, if $s=2$ then $\di(G)\le 6$.\end{Corollary}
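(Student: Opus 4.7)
The strategy is to apply Corollary \ref{chiefpreCor} to a chief series of $R$ and to bound the resulting contributions via Corollary \ref{pmodlemmaMainCor}. Under the hypotheses, $X\le Sp_{2}(3)\cong SL_{2}(3)$ is soluble, so $R$ is soluble and every chief factor of $R$ is elementary abelian. Refining the normal series $1\le Z\le \widetilde{N}\le K_{1}\le R$ (where $\widetilde{N}$ is the preimage of $N$ in $K_{1}$) through the chief series $1\le\{\pm I\}\le Q_{8}\le SL_{2}(3)$ of $SL_{2}(3)$, the possible chief factors of $R$ are: $Z$ (rank $\le 1$, $p=3$); $N$ (rank $2$, $p=3$); the abelian chief factors inside $X$, namely (at most) $\{\pm I\}\cong C_{2}$ (rank $1$, $p=2$), $Q_{8}/\{\pm I\}\cong V_{4}$ (rank $2$, $p=2$) and $SL_{2}(3)/Q_{8}\cong C_{3}$ (rank $1$, $p=3$); and $R/K_{1}$ (rank $\le 1$, $p=2$). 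Since $c_{nonab}(R)=0$, Corollary \ref{chiefpreCor} yields
\[\di(G)\le\sum_{i}d_{G}(M_{i})+\di(S).\]

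Each term $d_{G}(M_{i})$ is estimated through Corollary \ref{pmodlemmaMainCor}: Part (i) gives the default $a_{i}E(s,p_{i})$, while Part (ii) produces the sharper $\min\{\lfloor bs/\sqrt{\log s_{p_{i}}}\rfloor,s/s_{q}\}$ whenever a Sylow $q$-subgroup of $K=G\cap B$ acts transitively on the non-identity elements of $V_{i}$. Two such Sylow transitivities are exploited: (a) the quotient $C_{3}=SL_{2}(3)/Q_{8}$ acts transitively on the three non-zero elements of $V_{4}=Q_{8}/\{\pm I\}$, improving the $V_{4}$-factor bound with $(p,q)=(2,3)$; and (b) a direct count shows that the Sylow $2$-subgroup $Q_{8}$ of $SL_{2}(3)$ (of order $8=|N^{\ast}|$) acts regularly on $N^{\ast}$, improving the $N$-factor bound with $(p,q)=(3,2)$. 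Crucially, (b) is available exactly when $Y\supseteq Q_{8}$, i.e.\ $|Y|\in\{8,24\}$.

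In the generic case, both transitivities (a) and (b) apply, and one collects $E(s,3)$ from $Z$, $\min\{\lfloor bs/\sqrt{\log s_{3}}\rfloor,s/s_{2}\}$ from $N$, $E(s,2)$ from $\{\pm I\}$, $\min\{\lfloor bs/\sqrt{\log s_{2}}\rfloor,s/s_{3}\}$ from $V_{4}$, $E(s,3)$ from $C_{3}$, $E(s,2)$ from $R/K_{1}$, and $\di(S)$, which is the first stated bound. In the exceptional case $|Y|=8$ one necessarily has $X=Y=Q_{8}$, so $X$ has no $C_{3}$ quotient; transitivity (a) is lost, the $V_{4}$-factor falls back to its default $2E(s,2)$ contribution, and there is no separate $C_{3}$ chief factor. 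Adding up gives $E(s,3)+\min\{\lfloor bs/\sqrt{\log s_{3}}\rfloor,s/s_{2}\}+E(s,2)+2E(s,2)+E(s,2)+\di(S)$, matching the second stated bound.

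The final claim, $\di(G)\le 6$ when $s=2$, requires a slightly sharper accounting. Substituting $s=2$ into the main estimates gives $E(2,p)=1$ and each $\min$-term $=1$, while $\di(S)\le 1$; this only yields $\di(G)\le 7$. To extract the extra saving one uses the observation that when $s=2$ the induced module $V_{i}\uparrow^{G}_{H}$ has total rank only $2a_{i}$, so every $G$-submodule $M_{i}$ is generated by at most $a_{i}$ elements, together with the fact that at most one of the trivially-rank-$1$ factors $Z$ or $R/K_{1}$ can be nontrivial in the extremal configuration (so the corresponding term drops out). The main obstacle throughout is the bookkeeping required to separate the two $|Y|$ cases cleanly and to verify that each Sylow-transitivity statement at the level of $X$ genuinely lifts, via the identification of $R$ with $R_{(1)}\le K$, to the hypothesis of Corollary \ref{pmodlemmaMainCor}(ii).
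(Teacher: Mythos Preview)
Your approach via Corollary \ref{chiefpreCor} and Corollary \ref{pmodlemmaMainCor} is the right framework, and your treatment of the cases $|Y|=24$ and $|Y|=8$ is essentially the paper's argument. However, there are two genuine gaps.

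First, the ``generic'' bound in the statement must hold for every $|Y|\neq 8$, in particular for $|Y|\in\{1,2,4\}$. Your argument assumes both transitivities (a) and (b), which forces $|Y|=24$; for $|Y|\le 4$ neither is available (since $Q_{8}\not\le Y$). The paper handles these small cases differently: when $|Y|\le 4$, the factors $M_i$ corresponding to chief factors of $R$ lying above $Y$ inside $X$ are trivial (because the projection of $G\cap B$ to $R$ already sits below them), so the total $X$-contribution is at most $2E(s,2)$. This yields $\di(G)\le 3E(s,2)+3E(s,3)+\di(S)$, and the claimed bound follows from the elementary inequalities $E(s,2)\le\min\{\lfloor bs/\sqrt{\log s_2}\rfloor,s/s_3\}$ and $E(s,3)\le\min\{\lfloor bs/\sqrt{\log s_3}\rfloor,s/s_2\}$. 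Relatedly, your assertion that $|Y|=8$ forces $X=Q_8$ is not justified: $X$ may still be all of $SL_2(3)$, but the $C_3$ chief factor then contributes a trivial $M_i$ for the same reason, so your final count is correct even though the reasoning is not.

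Second, your $s=2$ argument does not work. The observation that $d_G(M_i)\le a_i$ when $s=2$ only recovers $a_i E(2,p_i)=a_i$, which is the default bound and gives no saving; and the claim that ``at most one of $Z$ or $R/K_1$ can be nontrivial in the extremal configuration'' is unsupported (both may certainly be nontrivial simultaneously). The paper instead proves $\di(G)\le 6$ for $s=2$ by a direct construction: it first bounds $\di(G/G\cap H^2)$ for a carefully chosen $H\le K_1$ with $H/Z$ of shape $N.L$ ($L\le Z(Sp_2(3))$), and then shows that $A:=G\cap H^2$ is generated as a $G$-group by two elements when $|Y|\ge 8$ (three when $|Y|<8$), by combining a generator $x$ of the central $3$-part with a $2$-power-order preimage $y$ of a generator of $A/(G\cap M^2)$ into the single element $xy$.
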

\begin{proof} Since $Y\le Sp_{2}(3)$ is completely reducible, $|Y|$ must be $1$, $2$, $4$, $8$ or $24$. Suppose first that $s\neq 2$. If $|Y|$ is $1$, $2$ or $4$, then $R$ is soluble of order dividing $2^{3}3^{3}$, and the result follows from Corollary \ref{chief} Part (i), since $E(s,2)\le \min\left\{\lfloor bs/\sqrt{\log{s_{2}}}\rfloor,s/s_{3}\right\}$, and $E(s,3)\le \min\left\{\lfloor bs/\sqrt{\log{s_{3}}}\rfloor,s/s_{2}\right\}$. If $|Y|=8$, then $Y$ acts transitively on the nonidentity elements of $N$, and the result follows from Corollary \ref{pmodlemmaMainCor} Part (ii) with $(p,q)=(3,2)$, and Corollary \ref{chiefpreCor}. If $|Y|=24$, then a Sylow $2$-subgroup of $Y$ acts transitively on the nonidentity elements of $N$; furthermore, a Sylow $3$-subgroup of $Y/Z(Y)\cong A_{4}$ acts transitively on the nonidentity elements of the Klein $4$-subgroup of $Y/Z(Y)$. The result then follows, again from Corollary \ref{pmodlemmaMainCor} Part (ii) and Corollary \ref{chiefpreCor}.

Finally, assume that $s=2$. We need to prove that $G$ can be invariably generated by $6$ elements. Let $M\le K_{1}$ such that $M/Z\cong N$. Also, let $H\le K_{1}$ such that $M\le H$ and $H/M=(K_{1}\cap Z(Sp_{2}(3)))/M\le C_{2}$. Arguing as in the paragraph above, we have $\di(G/G\cap H^{2})\le E(2,2)+2/2_{2}+E(2,3)+\di(S)=4$ if $|Y|= 24$; $\di(G/G\cap H^{2})\le E(2,2)+2E(2,2)+\di(S)=4$ if $|Y|=8$; and $\di(G/G\cap H^{2})\le E(2,2)+E(2,2)+\di(S)=3$ if $|Y|<8$. Thus, we just need to show that $A:=G\cap H^{2}$ can be generated, as a $G$-group, by $2$ elements if $|Y|\ge 8$, and $3$ elements if $|Y|< 8$. Now, by Corollary \ref{pmodlemmaMainCor} Part (i), $G\cap Z^{2}$ can be generated, as a $G$-module, by $E(2,3)=1$ element; let $x$ be such an element, so that $|x|=3$. Also, $H/Z$ has shape $N.L$, where $L\le Z(Sp_{2}(3))$. Suppose first that $|Y|\geq 8$. Since a Sylow $2$-subgroup of $Sp_{2}(3)$ acts transitively on the nonidentity elements of $N$, Corollary \ref{pmodlemmaMainCor} Part (ii) implies that $G\cap M^{2}/G\cap Z^{2}$ can be generated, as a $G$-module, by $2/2_2=1$ element. Say $x_{1}\in G\cap M^{2}$ is the preimage of such an element. 

Next, suppose that $|Y|< 8$. Then again using Corollary \ref{pmodlemmaMainCor} Part (ii) $G\cap M^{2}/G\cap Z^{2}$ can be  generated, as a $G$-module, by $2E(s,3)=2$ elements. Say $x_{1}$, $x_{2}\in G\cap M^{2}$ are preimages. 

Now, using Corollary \ref{pmodlemmaMainCor} Part (i), $A/G\cap M^{2}$ can be generated, as a $G$-module, by $E(2,2)=1$ element. Say $y\in A$ is the preimage of such an element. Clearly we may assume that $|y|$ is a power of $2$. Thus, since $y\in C_{K_{1}}(x)$, it is now easy to see that $\left\{x_{1},xy\right\}$ (or $\left\{x_{1},x_{2},xy\right\}$ if $|Y|<8$) generates $A$ as a $G$-group, and this completes the proof.\end{proof}                

\section{Minimal invariable generator numbers in certain classes of finite groups}\label{InvGenSection}
In this section, we consider upper bounds for the function $\di$ on various classes of finite groups. We begin with bounds on $\di(G)$ for some transitive permutation groups $G$.
\begin{Proposition}\label{TransProp} Let $G$ be a transitive permutation group of degree $n$.\begin{enumerate}[(i)]
\item If $n=6$, then $\di(G)\le 2$, except that $\di(S_{6})=3$;
\item If $n=8$, then $\di(G)\le 4$, and $\di(G)=4$ if and only if $G\cong D_{8}\circ D_{8}$;
\item If $n=9$, then $\di(G)\le 3$;
\item If $n=10$, then $\di(G)\le 3$;
\item If $n=12$, then $\di(G)\le 4$;
\item If $n=16$, then $\di(G)\le 6$;
\item If $n=18$, then $\di(G)\le 4$.
\end{enumerate}\end{Proposition}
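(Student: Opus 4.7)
The plan is to induct on $n$ within each claimed degree, splitting by whether $G$ is primitive or imprimitive. If $G$ is primitive of degree $n\in\{6,8,9,10,12,16,18\}$, the classical classification of primitive groups of small degree reduces us to a short explicit list of almost simple groups together with, in prime-power degrees, affine groups. For an almost simple $G$ with socle $T$, I apply Theorem~\ref{SimpleTheorem} and Corollary~\ref{KLSSimpleCorOut} to obtain $\di(G)\le 5$, then refine to the claimed bound using the small outer automorphism structure of $T$, recording the single genuine exception $\di(S_6)=3$ (whose lower bound comes from the two non-conjugate classes of index-$6$ subgroups of $S_6$, equivalently from $S_6\cong Sp_4(2)$ as in Theorem~\ref{CompRedTheorem}). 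For an affine primitive $G\le \mathrm{AGL}_d(p)$ with $n=p^d$, the nonzero vectors of the socle form a single $G$-orbit, so $d_G(\mathrm{Soc}(G))\le 1$ and the bound follows from Lemma~\ref{diMin} together with Theorem~\ref{SimpleTheorem}.

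If $G$ is imprimitive with minimal block $\Delta$ of size $r$, Theorem~\ref{SupPerm} embeds $G$ as a large subgroup of $R\wr S$, with $R=\Stab_G(\Delta)^{\Delta}$ primitive of degree $r$ and $S$ transitive of degree $s=n/r$. The bound is then obtained from Corollary~\ref{chiefpreCor}, refined by Corollary~\ref{chief} and the specialised versions Corollary~\ref{S4NewBoundCor} (for $R=A_4, S_4$, used at $n=8,12$) and Corollary~\ref{6CaseCor} (for $r=6$, used at $n=12,18$), with $\di(S)$ bounded inductively or directly by Theorem~\ref{DetLucHalfn}. For each of the small degrees in the proposition the possible pairs $(r,s)$ form a very short list, which is processed case by case.

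The delicate part of the argument is the sharpness statement at $n=8$. The group $D_8\circ D_8$ is nilpotent of order $32$ with $\Phi(D_8\circ D_8)=Z(D_8\circ D_8)$ of order $2$ and Frattini quotient isomorphic to $2^4$; hence, by \cite[Proposition 2.4]{KLS}, $\di(D_8\circ D_8)=d(D_8\circ D_8)=4$. For every other transitive degree-$8$ group one must then show $\di(G)\le 4$: when $r=2$ I invoke Corollary~\ref{chief} Part (ii) using the block structure of $S\le \Sym(4)$ to tighten the $C_2$-contribution, and when $r=4$ I apply Corollary~\ref{S4NewBoundCor} with $s=2$.

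The main obstacle is this sharpness analysis, together with the iterated-wreath-product bookkeeping at $n=16$, where the block towers of types $(2,2,2,2)$, $(2,2,4)$, $(2,4,2)$, $(4,2,2)$, $(2,8)$, $(4,4)$ and $(8,2)$ all have to be kept below $6$; the sharper Part (ii) of Corollary~\ref{chief} (rather than the weaker Part (i) or the general Corollary~\ref{37}) is essential there, because an unrefined application of the asymptotic $n/\sqrt{\log n}$ bound loses the constant. Once the $n=8,9,16$ cases are in hand, the remaining degrees $n=10,12,18$ reduce quickly, since the possible minimal block sizes are $2, 3, 5, 6$ or $9$, each of which either falls under a previously handled inductive case or is resolved directly by combining Corollary~\ref{chief} with the two-line bound $\di(S)\le \lfloor s/2\rfloor$ from Theorem~\ref{DetLucHalfn}.
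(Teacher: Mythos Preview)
The paper proves this proposition by a direct computer check in MAGMA: using the database of transitive permutation groups of small degree, one verifies for each $G$ that some set of the claimed size contains a derangement for every maximal subgroup. Your proposed route is entirely different --- a theoretical case analysis via the wreath-product machinery of Section~\ref{WreathAppSection}.

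There is a genuine gap in your plan at $n=8$. You write that for every transitive degree-$8$ group other than $D_8\circ D_8$ ``one must then show $\di(G)\le 4$''; but the ``if and only if'' in part~(ii) requires you to prove $\di(G)\le 3$ for every such $G$. The wreath-product estimates you cite are not sharp enough for this. For instance, with minimal block size $r=4$ and $s=2$, Corollary~\ref{S4NewBoundCor} gives only
\[
\di(G)\le E'(2,2)+\min\{\lfloor 2b\rfloor,2\}+E'(2,3)+\di(C_2)=1+1+1+1=4,
\]
and with $r=2$, $s=4$ the iterated bound from Corollary~\ref{chief}(ii) likewise bottoms out at $4$ for a generic transitive $2$-subgroup of $C_2\wr\Sym(4)$. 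These structural bounds cannot distinguish $D_8\circ D_8$ (which has $d=\di=4$) from the several other transitive $2$-groups of degree $8$ with $d=3$: the distinction lives in the precise rank of the Frattini quotient, not in the block tower, and the corollaries you invoke see only the latter. This is exactly why the paper resorts to a machine check; to salvage your approach at $n=8$ you would need a much finer by-hand analysis of the $50$ transitive groups of degree $8$, which amounts to redoing the computation anyway. Similar sharpness concerns arise at $n=16$, where your block-tower bookkeeping will struggle to stay at or below $6$ uniformly.
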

\begin{proof} By \cite[Lemma 2.1]{KLS}, a subset $X$ of $G$ invariably generates $G$ if and only if th following holds: for each maximal subgroup $M$ of $G$, at least one element of $X$ acts fixed point freely on the $G$-cosets of $M$. Using this, and the database of transitive permutation groups of small degree (see \cite{CanHol}), one can readily check that the result holds in each of the listed cases, using MAGMA.\end{proof}

Next, we study subgroups of wreath products in which the bottom group is cyclic.
\begin{Proposition}\label{Cmwr3} Let $G$ be a large subgroup in the wreath product $C_{m}\wr S_{3}$, where $C_{m}$ denotes the cyclic group of order $m$. Then $\di(G)\le 4$.\end{Proposition}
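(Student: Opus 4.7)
The plan is to peel off the base subgroup of $G$ inside $C_m \wr S_3$ and bound its contribution to $\di(G)$ separately. Let $B := C_m^3$ denote the base group of $C_m \wr S_3$, and set $K := G \cap B$. Then $K$ is a normal abelian subgroup of $G$, and since $G$ is large, $G/K \cong G\pi = S_3$. As $\di(S_3) = 2$ (a transposition and a 3-cycle invariably generate $S_3$), Lemma \ref{diMin}(ii) yields
\[ \di(G) \le \di(S_3) + d_G(K) = 2 + d_G(K). \]
Hence it suffices to show $d_G(K) \le 2$. The $G$-action on the abelian group $K$ factors through $G/K \cong S_3$, so $d_G(K)$ equals the minimal number of generators of $K$ as a $\mathbb{Z}/m[S_3]$-submodule of $B = C_m^3$ under the natural permutation action.

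To bound $d_{S_3}(K)$, first reduce to prime-power $m$ by the Chinese Remainder Theorem: the decomposition $C_m = \prod_p C_{p^{v_p(m)}}$ is $S_3$-equivariant, so $K$ splits as $K = \prod_p K_p$ with $K_p \le C_{p^{v_p(m)}}^3$, and generators of each $K_p$ combine via CRT into generators of $K$, giving $d_{S_3}(K) \le \max_p d_{S_3}(K_p)$. Assume now $m = p^r$. Consider the augmentation short exact sequence of $\mathbb{Z}/p^r[S_3]$-modules
\[ 0 \longrightarrow W \longrightarrow C_{p^r}^3 \stackrel{\Sigma}{\longrightarrow} C_{p^r} \longrightarrow 0, \]
where $\Sigma(a, b, c) = a + b + c$ and $W = \ker \Sigma$. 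Intersecting with $K$ gives
\[ 0 \longrightarrow K \cap W \longrightarrow K \longrightarrow K / (K \cap W) \longrightarrow 0. \]
The quotient $K/(K \cap W)$ embeds in $C_{p^r}^3 / W \cong C_{p^r}$ with trivial $S_3$-action, so is cyclic and can be generated by one element. For the kernel I claim every $S_3$-submodule of $W$ is cyclic as a $\mathbb{Z}/p^r[S_3]$-module; taking this for granted, I then get $d_{S_3}(K \cap W) \le 1$, hence $d_{S_3}(K) \le 1 + 1 = 2$ and $\di(G) \le 4$.

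To establish the cyclicity claim, I split on $p$. For $p \ne 3$, the element $3$ is a unit in $\mathbb{Z}/p^r$, and $W \otimes \mathbb{F}_p$ is simple as an $\mathbb{F}_p[S_3]$-module (it is the standard $2$-dimensional representation; simplicity for $p = 2$ follows by direct check, since any putative proper nonzero $S_3$-sub of $W \otimes \mathbb{F}_2$ would be fixed by the $3$-cycle acting on $\{(1,1,0),(1,0,1),(0,1,1)\}$). A standard Nakayama argument then shows the $\mathbb{Z}/p^r[S_3]$-submodules of the free rank-two module $W$ form the chain $0 \subset p^{r-1}W \subset \cdots \subset pW \subset W$, each generated by $p^j(1,-1,0)$ for the appropriate $j$. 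For $p = 3$, $W \otimes \mathbb{F}_3$ is uniserial with trivial socle (generated by $(1,1,1)$) and sign head, producing a longer but still totally ordered submodule lattice of $W$ over $\mathbb{Z}/3^r$; each step of the chain can be generated by a single element chosen as a lift of a generator of the head composition factor. The hard part of the argument is precisely this $p = 3$ case, where the $T$--$W$ decomposition of $C_{p^r}^3$ fails ($T \cap W \ne 0$) and the submodule lattice of $W$ must be enumerated carefully by hand, but the resulting explicit generators (for instance $(1,1,1+3^{j})$-type elements) still give cyclicity at each level.
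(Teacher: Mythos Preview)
Your argument is correct and takes a genuinely different route from the paper's. Both proofs split along the augmentation map $\Sigma:C_m^3\to C_m$, but they distribute the work differently.

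The paper peels off $N_1:=G\cap W$ (the augmentation-zero part) first. Since $W\cong C_m^2$, the subgroup $N_1$ is metacyclic, so $d_G(N_1)\le 2$ holds for free; the effort then goes into showing $\di(G/N_1)\le 2$ by an explicit construction. The quotient $G/N_1$ is a central extension of $S_3$ by a cyclic group, and the paper splits that cyclic centre into its $3$-part and $3'$-part and pairs each with a suitable lift of a $2$-cycle or $3$-cycle.

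You instead peel off all of $K=G\cap B$ and prove the stronger module-theoretic fact $d_G(K)\le 2$. This requires showing that every $S_3$-submodule of $W$ over $\mathbb{Z}/p^r$ is cyclic, which is where the real content of your proof lies. Your Nakayama argument for $p\neq 3$ is clean. For $p=3$ your sketch is correct but could be tightened: the quickest justification is that $W\cong \mathbb{Z}/3^r[x]/(x^2+x+1)$ as a $C_3$-module, and since $(x-1)^2=-3x$ in this ring, the maximal ideal $(3,x-1)$ equals $(x-1)$, so the ring is local with principal maximal ideal and hence uniserial of length $2r$. The $S_3$-submodules, being among the $C_3$-submodules, then also form a chain.

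What each approach buys: the paper's route avoids any submodule-lattice analysis at the cost of an ad~hoc invariable-generation argument in the quotient; your route trades that for a structural fact about $W$ that, once established, makes the rest immediate via $\di(S_3)=2$.
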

\begin{proof}Let $B$ be the base group of $W:=C_{m}\wr S_{3}$, so that $\di(G)\le d_{G}(G\cap B)+\di(G/G\cap B)$. Since $d_{G}(G\cap B)\le d(G\cap B)\le 3$, we may assume that $G/G\cap B\cong S_{3}$.

Now, writing $B$ in additive notation, we next set $B_{1}:=\left\{(a,b,c)\in B\cap B\text{ : }a+b+c=0\right\}\le B$. Then $B_{1}$ is a $W$-submodule of $B$. Furthermore, $W_{1}\cong C_{m}^{2}$, so $N_{1}:=G\cap W_{1}$ is metacyclic. Hence, since Lemma \ref{diMin} gives $\di(G)\le d_{G}(N_{1})+\di(G/N_{1})$, it will suffice to prove that $d_{i}(G/N_{1})\le 2$. 

Writing bars to denote reduction modulo $N_{1}$, we have $\overline{G}\cong (\overline{G\cap B}).S_{3}$. Furthermore, it is easily seen that $\overline{G\cap B}$ is cyclic, and is contained in $Z(G)$; write $\overline{G\cap B}=3^{k}\times t$, where $(3,t)=1$, and let $x$ and $y$ be elements of $\overline{G\cap B}$ of order $3^{k}$ and $t$, respectively. Also, let $a$ and $b$ be elements of $\overline{G}$ of $2$-power and $3$-power order, respectively, which reduce to a $2$-cycle and a $3$-cycle modulo $\overline{G\cap B}$. Then clearly the set $\left\{xa,yb\right\}$ invariably generates $G$, and the result follows.\end{proof}

The next two results extend Lemmas 3.3 and 3.4 in \cite{derek}; indeed, our proofs use the same techniques as used therein.
\begin{Proposition}\label{2Prop} Let $G$ be a subgroup in the wreath product $C_{2}\wr \Sym{(u)}$. Then $\di(G)\le u$, and if $\di(G)=u$, then $G$ is a $2$-group and $u$ is even.\end{Proposition}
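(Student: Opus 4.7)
Write $B \cong \mathbb{F}_2^u$ for the base group of $W := C_2\wr\Sym(u)$, let $\pi\colon W\to\Sym(u)$ be the projection, and set $K := G\cap B$ and $T := G\pi\le\Sym(u)$. Then $K$ is elementary abelian, is a $T$-submodule of the permutation module $\mathbb{F}_2[\Omega]$ on $\Omega := \{1,\dots,u\}$, and $G/K\cong T$. My plan is to prove $\di(G)\le u$ by induction on $u$, with the case $u=1$ trivial, splitting the inductive step according to whether or not $T$ acts transitively on $\Omega$.

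\emph{Intransitive case.} Suppose $T$ admits a non-trivial invariant partition $\Omega = \Omega_1\sqcup\Omega_2$ with $u_j := |\Omega_j|\ge 1$. The inclusion $T\le\Sym(\Omega_1)\times\Sym(\Omega_2)$ gives $G\le(C_2\wr\Sym(\Omega_1))\times(C_2\wr\Sym(\Omega_2))$. Let $N\trianglelefteq G$ be the kernel of the projection of $G$ onto the second factor; then $N\le C_2\wr\Sym(\Omega_1)$ and $G/N$ embeds in $C_2\wr\Sym(\Omega_2)$. By the inductive hypothesis $\di(N)\le u_1$ and $\di(G/N)\le u_2$, so Lemma \ref{diMin}(i) yields $\di(G)\le u_1+u_2=u$.

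\emph{Transitive case.} Lemma \ref{diMin}(ii) gives $\di(G)\le d_G(K)+\di(T)$. Theorem \ref{DetLucHalfn} bounds $\di(T)\le\lfloor u/2\rfloor$, with the sole exception $T=\Sym(3)$ at $u=3$, where $\di(T)=2$. For $d_G(K)$, observe that $K$ is a submodule of the induced module $V\uparrow^T_H$ where $H$ is a point stabiliser in $T$ and $V$ is the trivial $1$-dimensional $\mathbb{F}_2[H]$-module; Corollary \ref{pmodlemmaMainCor}(i) then yields $d_G(K)\le E(u,2)$. The remaining task is to verify $E(u,2)+\lfloor u/2\rfloor\le u$ for all $u\ge 2$: when $u$ has an odd prime divisor, Definition \ref{EDEF} gives $E(u,2)\le u/\lpp(u/u_2)\le u/3$, so the sum is at most $5u/6$; when $u$ is a power of $2$, the estimate $E(u,2)\le\lfloor bu/\sqrt{\log u}\rfloor$ handles $u\ge 8$, while the cases $u\in\{2,4\}$ are checked by hand.

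\emph{Equality case and main obstacle.} For the equality claim $\di(G)=u\Rightarrow G$ a $2$-group with $u$ even, the transitive-case estimates above give $\di(G)<u$ as soon as $u\ge 5$, so any equality configuration must propagate through iterated intransitive reductions down to small transitive blocks of size $u_j\in\{2,4\}$. A direct inspection of the saturating cases at $u_j\in\{2,4\}$ in the transitive setting forces both $T$ and $K$ to be $2$-groups within each such block, and hence each block, and so $G$ itself, is a $2$-group. The \emph{main obstacle} will be establishing the parity condition: pinning down that the only way for the intransitive splitting of Lemma \ref{diMin}(i) to be tight is for the blocks to line up with even total size. This requires a finer analysis than the naive $\di$-additivity used in Step 1 -- one needs to track, through the induction, how singleton or otherwise ``small'' blocks would force strict inequality, thereby ruling out any equality configuration with $u$ odd.
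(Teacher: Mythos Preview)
Your route to $\di(G)\le u$ is correct but roundabout: the paper obtains it in one stroke by noting that $G\le\Sym(2u)$ and applying Theorem~\ref{DetLucHalfn}. For the equality analysis your intransitive/transitive split is exactly the paper's structure; in the transitive case the paper, like you, reduces to $E(u,2)\ge u/2$, which forces $u\in\{2,4\}$, and then dispatches $u=3$ by direct computation and $u=4$ via Proposition~\ref{TransProp} (the only transitive degree-$8$ group with $\di=4$ is $D_8\circ D_8$, a $2$-group). Note that your bound $E(3,2)+\di(\Sym(3))=1+2=3$ does not by itself give strict inequality at $u=3$, so you too need the direct check there.

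Your ``main obstacle'' is not merely a gap in your argument: it points to a genuine defect in the statement. Take $G=C_2^{\,u}$, the full base group, for any odd $u$. Then $G$ is nilpotent, so $\di(G)=d(G)=u$; hence $\di(G)=u$ with $G$ a $2$-group but $u$ odd. The intransitive induction cannot rescue the parity claim because a singleton orbit feeds in the base case $u=1$, where $G=C_2$ gives $\di(G)=1=u$ with $u$ odd. The paper's ``the result follows from the inductive hypothesis'' glosses over exactly this. What does survive is the assertion that $\di(G)=u$ forces $G$ to be a $2$-group: the transitive analysis is valid, and in the intransitive case $\di(G)=u$ forces both image and kernel to attain their respective maxima, whence both are $2$-groups by induction (the base case $u=1$ being fine for this part). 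So your instinct that the parity claim needs something beyond naive additivity is correct---in fact no finer analysis will save it as stated.
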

\begin{proof} We prove the claim by induction on $u$, and the case $u=1$ is clear, so assume that $u>1$. Since $G\le \Sym{(2u)}$, the claim $\di(G)\le u$ follows immediately from Theorem \ref{DetLucHalfn}. So assume that $\di(G)=u$. We need to show that $G$ is a $2$-group and that $u$ is even. Let $\pi:G\rightarrow \Sym{(u)}$ denote the projection over the top group. If $\pi(G)$ is intransitive, with an orbit $\Delta$ of size $v$, then the induced action of $G$ on the preimage of $\Delta$ in $\left\{1,\hdots,2u\right\}$ is contained in $C_{2}\wr \Sym{(v)}$, with kernel contained in $C_{2}\wr \Sym{(u-v)}$; hence, the result follows from the inductive hypothesis.

So assume that $\pi(G)$ is transitive. If $\ker(\pi)$ is trivial, then $G\le \Sym{(u)}$, so $\di(G)\le (u+\delta_{3,u})/2<u$. Thus, $G$ must be a transitive subgroup of $C_{2}\wr \Sym{(u)}$. If $u=3$ then $\di(G)\le 2$ by direct computation, so we must have $u\neq 3$. Using Corollary \ref{chief} and Theorem \ref{DetLucHalfn}, we have $u=\di(G)\le E(u,2)+u/2$. Using the definition of the function $E$, one easily sees that $E(u,2)<u/2$ unless $u=2$ or $u=4$. If $u=2$ then $G\le D_{8}$. If $u=4$, then $\di(G)=4$ implies that $G\cong D_{8}\circ D_{8}$ by Proposition \ref{TransProp}. The result follows.\end{proof}

\begin{Proposition}\label{S3Prop} Let $G\le S_{3}^{u}$. Then $\di(G)\le u$, except that $\di(G)=u+1$ when $G\cong 3^{u}:2$, with $Z(G)=1$.\end{Proposition}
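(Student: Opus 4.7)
I would prove this by induction on $u$, following the pattern of Proposition~\ref{2Prop}. The base case $u=1$ is immediate: among subgroups of $S_3$, only $S_3\cong 3:2$ with $Z(S_3)=1$ requires two invariable generators, matching the stated exception.

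For $u\ge 2$, I would first reduce to the \emph{subdirect} case in which every coordinate projection $\pi_i:G\to S_3$ is onto. If $\pi_i(G)=1$ for some $i$, then $G\le S_3^{u-1}$ and the inductive hypothesis yields $\di(G)\le u-1$. If $\pi_i(G)\in\{C_2,C_3\}$, the kernel $K:=\ker(\pi_i|_G)$ lies in $S_3^{u-1}$ and $G=K\langle c\rangle$ for an element $c$ of prime order $p\in\{2,3\}$ supported in coordinate $i$ and commuting with $K$. By induction $K$ admits an invariable generating set of size at most $u-1$ (or exactly $u$ in the exceptional subcase $K\cong 3^{u-1}:2$); replacing one of its generators of order coprime to $p$ by its product with $c$ produces an element of order $6$ and an invariable generating set of $G$ of the same cardinality, so $\di(G)\le u$.

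In the subdirect case, set $N:=G\cap C_3^u$, so that $N$ is a $G$-normal subdirect submodule of $C_3^u$ and $G/N\le C_2^u$ is subdirect. Lemma~\ref{diMin}(ii) gives $\di(G)\le d_G(N)+\di(G/N)=d_G(N)+m$, where $m:=\dim_{\mathbb{F}_2}(G/N)$. The $G/N$-action on $\mathbb{F}_3^u$ is by coordinate-wise sign, so $\mathbb{F}_3^u=\bigoplus_{i=1}^u W_i$ decomposes into one-dimensional modules with characters $\chi_i\in(G/N)^\vee$. By semisimplicity over the abelian group $G/N$, the submodule $N$ decomposes as $\bigoplus_\chi N_\chi$, and $d_G(N)=\max_\chi\dim N_\chi$ where the maximum is over distinct $\chi$. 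Since the $\chi_i$ separate points of $G/N$, the number $t$ of distinct characters satisfies $t\ge m$; hence $\max_\chi\dim N_\chi\le u-t+1$ and $\di(G)\le u-t+m+1$.

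The exception $G\cong 3^u:2$ with $Z(G)=1$ corresponds to $N=C_3^u$ and $t=m=1$, where the unique involution acts as $-1$ on $N$. I would confirm $\di(G)=u+1$ directly via \cite[Lemma 2.1]{KLS}, by enumerating the maximal subgroups (the normal $N$, together with the groups $H\langle n\sigma\rangle$ for hyperplanes $H\le N$ and nontrivial cosets $Hn\sigma$) and the conjugacy classes ($\{1\}$, the pairs $\{n,-n\}$ for nonzero $n\in N$, and the single involution class $N\sigma$): the criterion forces $u$ classes of the form $\{n_j,-n_j\}$ with $\{n_j\}$ spanning $N$, plus one element of $N\sigma$. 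The main obstacle is the remaining subdirect cases in which the bound $\max_\chi\dim N_\chi+m$ still reaches $u+1$ (for example $(3^{u-1}:2)\times S_3\le S_3^u$, where $m=t=2$ with one large isotypic class but $G$ is not exceptional); in these I would close the gap by constructing an explicit invariable generating set of size $u$, pairing the $m$ involution lifts of a basis of $G/N$ with $C_3$-generators of the corresponding $N$-isotypic components into elements of order $6$, so that cubing and squaring recover the involution and $C_3$-piece separately, and verifying via the Kantor--Lubotzky--Shalev criterion that the resulting set remains invariably generating across all $G$-conjugates.
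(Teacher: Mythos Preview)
Your reduction to the subdirect case has a genuine gap. You assert that when $\pi_i(G)\in\{C_2,C_3\}$ there is an element $c\in G$ of prime order supported only in coordinate $i$, but no such element need exist, even in the critical exceptional subcase. For a concrete counterexample with $u=3$ and $K\cong 3^{2}{:}2$ with $Z(K)=1$: take $K=\langle (1,a,1),(1,1,b),(1,\sigma_2,\sigma_3)\rangle\le S_3^3$ (with $a,b$ three-cycles and $\sigma_j$ involutions), and set $g=(\sigma_1,\sigma_2,1)$. Then $g$ normalises $K$, $g^2=1$, and $G:=K\cup gK$ is a group with $\pi_1(G)=C_2$; but the only candidate for your $c$ is $(\sigma_1,1,1)$, and one checks directly that $(\sigma_1,1,1)\notin G$ since $(\sigma_2,1)$ does not lie in the image of $K$ in coordinates $2,3$. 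So your merging trick cannot be carried out as stated. (In fact this $G$ is isomorphic to $S_3\times S_3$, so the conclusion $\di(G)\le 3$ still holds, but not by your argument.)

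Separately, your subdirect analysis is incomplete. The module-theoretic bound $\di(G)\le\max_\chi\dim N_\chi+m\le u+1$ is correct but does not reach $u$, and the paragraph proposing to ``pair involution lifts with $C_3$-generators into elements of order $6$'' is a sketch, not a proof: you have not verified that such elements lie in $G$ in general, nor that the resulting set is invariably generating across all conjugates.

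The paper's proof avoids both difficulties by reversing the roles in the induction: instead of projecting onto a single coordinate (so the \emph{quotient} lies in $S_3$ and the kernel is large), it projects onto the first $u-1$ coordinates, so the \emph{kernel} $K$ lies in the last copy of $S_3$. Then $K\in\{1,C_2,C_3,S_3\}$, only four cases, and the inductive hypothesis applies to $G/K\le S_3^{u-1}$. The analysis then splits according to whether $G/K$ is the exceptional group $3^{u-1}{:}2$ or not, and in each of the finitely many subcases for $K$ one writes down an explicit invariable generating set of size $u$ (or identifies $G$ as the new exception). This is both shorter and entirely elementary; your module-theoretic framework is more structural but, as you note yourself, leaves exactly the hardest cases unresolved.
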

\begin{proof} We prove the claim by induction on $u$, and the case $u=1$ is clear, so assume that $u>1$. Let $K$ be the kernel of the projection of $G$ onto the first $u-1$ direct factors of $S_{3}^{u}:=B_{1}\times B_{2}\times \hdots\times B_{u}$. 

Suppose first that $G/K\cong 3^{u-1}:2$, with $Z(G/K)=1$. Then it is easy to see that $G/K$ is invariably generated by a set $\left\{Kb_{1},Kb_{2},\hdots,Kb_{u}\right\}$, where $|Kb_{i}|=3$ for $1\le i\le u-1$, and $|Kb_{u}|=2$. Clearly we can assume that $|K|>1$. If $K=\langle x\rangle$ with $|x|=2$, then $|xb_{1}|=6$ and $\left\{xb_{1},b_{2},\hdots,b_{u}\right\}$ invariably generates $G$. If $|K|=6$, then we may assume that the projection of $b_{1}$ onto $B_{1}$ is a $2$-cycle, and that the projection of $b_{u}$ onto $B_{1}$ is a $3$-cycle. Hence, $\left\{b_{1},b_{2},\hdots,b_{u}\right\}$ invariably generates $G$. If $K=\langle y\rangle$, with $|y|=3$, and $G$ centralises $y$, then $\left\{b_{1},b_{2},\hdots,yb_{u}\right\}$ invariably generates $G$. Finally, if $K=\langle y\rangle$, with $|y|=3$, and $G$ does not centralise $y$, then $G\cong 3^{u}:2$, with $Z(G)=1$, and the claim follows.

Finally, assume that $G/K$ is not of the form $3^{u-1}:2$ with trivial centre. Then, by induction, $G/K$ is invariably generated by a set $\left\{Kb_{1},Kb_{2},\hdots,Kb_{u-1}\right\}$. If $K$ is cyclic then the result is clear, so assume that $K\cong S_{3}$. Then we may assume that $b_{1}$ projects onto a $3$-cycle in $B_{1}$. Hence, taking $b_{u}$ to be any $2$-cycle in $K$, it is clear that $\left\{b_{1},b_{2},\hdots,b_{u-1},b_{u}\right\}$ invariably generates $G$. The result follows.\end{proof}

The following is easily checked using direct computation.
\begin{Proposition}\label{2less3Prop} Let $G\le GL_{n}(2)$ be irreducible, and assume that $n\le 4$ and $\di(G)>n/2$. Then either $n=2$ and $G\cong GL_{2}(2)\cong S_{3}$; or $n=3$ and $G\cong 7:3$ or $L_{3}(2)$; or $n=4$ and $G\cong Sp_{4}(2)\cong S_{6}$.\end{Proposition}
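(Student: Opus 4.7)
The plan is to verify the claim by a direct computer enumeration, since the degrees $n\le 4$ are small and all irreducible subgroups of $GL_n(2)$ can be listed explicitly (e.g.\ via MAGMA's database of irreducible matrix groups). The key tool is the Kantor--Lubotzky--Shalev criterion \cite[Lemma 2.1]{KLS} already recalled in the proof of Proposition \ref{TransProp}: a subset $X\subseteq G$ invariably generates $G$ if and only if, for every conjugacy class $[M]$ of maximal subgroups of $G$, some $x\in X$ lies outside $\bigcup_{g\in G}M^g$. Computing $\di(G)$ therefore reduces to a finite set-cover problem on the conjugacy classes of $G$ and its maximal subgroups, once these classes are determined.

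For $n=2$, the only irreducible subgroups of $GL_2(2)\cong S_3$ are the Singer cycle $C_3$ (with $\di=1$) and $S_3$ itself (with $\di=2$); so $\di(G)>1$ forces $G\cong S_3$. For $n=3$, the irreducible subgroups of $GL_3(2)\cong L_3(2)$ are, up to conjugacy, $C_7$, the Frobenius group $7{:}3$, and $L_3(2)$ itself; applying the criterion above gives $\di$-values $1$, $2$, and $2$ respectively, which matches the statement.

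For $n=4$ I would invoke the classification of irreducible subgroups of $GL_4(2)\cong A_8$. The primitive examples include $A_8$, $A_7$, $Sp_4(2)\cong S_6$, $A_6$, together with subgroups of the normalizer $15{:}4$ of a Singer cycle; the imprimitive examples sit inside $GL_2(2)\wr S_2\cong S_3\wr S_2$ or inside $GL_1(2)\wr S_4\cong S_4$. For each such $G$ the same criterion yields $\di(G)$ explicitly, and one must verify that the condition $\di(G)\ge 3$ singles out only $Sp_4(2)\cong S_6$ (where $\di=3$ by Proposition \ref{TransProp}(i)). The only real obstacle is ensuring the completeness of the enumeration of irreducible subgroups and handling the many small soluble groups (subgroups of $15{:}4$, $S_3\wr S_2$, $S_4$) uniformly; this is guaranteed by the standard databases, and the per-group computation of $\di$ is entirely routine because $|GL_4(2)|=20160$.
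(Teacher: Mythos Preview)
Your proposal is correct and takes essentially the same approach as the paper: the paper's entire proof is the single sentence ``The following is easily checked using direct computation'', and your description of how to carry out that computation (enumerate the irreducible subgroups via the standard database and test invariable generation via the maximal-subgroup criterion of \cite[Lemma 2.1]{KLS}) is precisely what is intended. One minor remark: your list of primitive irreducible subgroups of $GL_4(2)$ is not quite complete (for instance $A_5$, $S_5$, $3.S_5$ and $S_7$ also occur---compare Table~1 and the proof of Part~1 of Theorem~\ref{CompRedTheorem}, case $r=4$), but since you explicitly defer to the database for the full enumeration this does not affect the validity of the argument.
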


\begin{Lemma}\label{Subd1Lemma} Let $H$ and $K$ be groups with no common nontrivial homomorphic image, and let $G$ be a subdirect product of $H\times K$. Then $\di(G)\le \max\left\{\di(H),\di(K)\right\}$.\end{Lemma}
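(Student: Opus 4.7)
The plan is to first show, using Goursat's lemma, that the hypothesis on $H$ and $K$ forces $G$ to be the full direct product $H\times K$, and then to construct an invariable generating set of the required size by pairing up invariable generating sets of the two factors.

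For the first step, recall that Goursat's lemma describes subdirect products of $H\times K$ as follows: such a subgroup is determined by normal subgroups $H_2\unlhd H$ and $K_2\unlhd K$ together with an isomorphism $\phi\colon H/H_2\xrightarrow{\sim} K/K_2$. Since $H$ and $K$ share no nontrivial common quotient, any such $\phi$ forces $H/H_2=K/K_2=1$, so $H_2=H$ and $K_2=K$. The resulting subdirect product is $H\times K$ itself, so $G=H\times K$.

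For the second step, set $d:=\max\{\di(H),\di(K)\}$, and choose invariable generating sets $\{h_1,\dots,h_d\}$ of $H$ and $\{k_1,\dots,k_d\}$ of $K$, padding the shorter one with the identity element (which clearly does not affect the property of invariably generating). Form the subset
\[
X:=\{(h_1,k_1),(h_2,k_2),\dots,(h_d,k_d)\}\subseteq H\times K.
\]
To verify that $X$ invariably generates $G=H\times K$, fix an arbitrary tuple $((x_1,y_1),\dots,(x_d,y_d))\in (H\times K)^d$ and set
\[
N:=\langle (h_1^{x_1},k_1^{y_1}),\,(h_2^{x_2},k_2^{y_2}),\,\dots,\,(h_d^{x_d},k_d^{y_d})\rangle.
\]
The projection of $N$ onto the first factor contains $\langle h_1^{x_1},\dots,h_d^{x_d}\rangle=H$, and similarly the projection onto the second factor equals $K$. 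Hence $N$ is a subdirect product of $H\times K$, and by the first step it must be all of $H\times K$. This shows that $X$ invariably generates $G$, and since $|X|\le d$, the claim follows.

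There is no serious obstacle here: the argument is essentially a direct application of Goursat's lemma in two places, and the only mild care needed is the padding by identity elements to make the two invariable generating sets have equal length. The whole proof should fit in a few lines once Goursat is invoked.
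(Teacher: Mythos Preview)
Your proof is correct and follows essentially the same approach as the paper's: both arguments reduce to the observation that any subdirect product of $H\times K$ must be the full direct product (you cite Goursat's lemma, the paper proves this inline via the quotient $R/[(R\cap H)\times(R\cap K)]$), and both then pair up padded invariable generating sets and apply this observation to the subgroup generated by arbitrary conjugates.
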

\begin{proof} The proof is almost identical to the proof of Lemma 2.4 in \cite{derek}; we give the details here for the readers benefit. Let $\left\{h_{1},h_{2},\hdots,h_{m}\right\}$ and $\left\{k_{1},k_{2},\hdots,k_{n}\right\}$ be invariable generating sets for $H$ and $K$ respectively, and assume, without loss of generality, that $m\ge n$. Also, let $g_{1}$, $g_{2}$,$\hdots$,$g_{m}\in G$, with $g_{i}=x_{i}y_{i}$ ($x_{i}\in H$, $y_{i}\in K$), and let $X=\langle h_{1}^{x_{1}}k_{1}^{y_{1}},\hdots,h_{n}^{x_{n}}k_{n}^{y_{n}},h_{n+1}^{x_{n+1}},\hdots,h_{m}^{x_{m}}\rangle$. We claim that $X=H\times K=G$. Indeed, if $R$ is any subdirect product of $H\times K$, then $R/[(R\cap H)\times(R\cap K)]$ is a homomorphic image of both $H$ and $K$. Thus, we must have $R=(R\cap H)\times (R\cap K)$, and hence $R=H\times K$. Since both $X$ and $G$ are subdirect products of $H\times K$, our claim follows.\end{proof}

\begin{Proposition}\label{Subd2Prop} Let $G$ be a subdirect product of $H\times K$, where $1\neq K$ is a finite group, and $H$ is isomorphic to either $S_{3}$, $S_{6}$, a nontrivial semidirect product $7:3$, or the group $L_{3}(2)$. Then\begin{enumerate}[(i)]
\item If $H\neq S_{6}$, then $\di(G)\le \di(K)+1$, and if $H=S_{6}$, then $\di(G)\le \di(K)+2$.
\item Suppose that $H=S_{3}$. If $K=7:3$ or $L_{3}(2)$, then $\di(G)=2$, while if $K=S_{6}$, then $\di(G)=3$.
\item Suppose that $H=S_{6}$. If $K=7:3$ or $L_{3}(2)$, then $\di(G)=3$, while if $K=S_{6}$, then $\di(G)\le 4$.
\item If $H\le GL_{3}(2)$ is irreducible, and $K$ is isomorphic to a semidirect product $3^{t}:2$, in which the involutions are self-centralising, then $\di(G)=t+1$.
\item If $H\le GL_{4}(2)$ is irreducible, and $K$ is isomorphic to a semidirect product $3^{t}:2$, in which the involutions are self-centralising, then $\di(G)=t+2$.
\end{enumerate}
\end{Proposition}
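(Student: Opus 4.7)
The plan is to analyze the normal subgroup $N := G \cap (H \times \{1\})$, which is isomorphic to a normal subgroup of $H$; since $G$ is subdirect, the second projection yields $G/N \cong K$. For each of the four possibilities for $H$, the list of normal subgroups of $H$ is short: $\{1, A_3, S_3\}$, $\{1, C_7, 7{:}3\}$, $\{1, L_3(2)\}$, or $\{1, A_6, S_6\}$, so the argument in each part splits on which of these $N$ occurs.

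For part (i) the three cases $N = 1$, $N$ proper nontrivial, and $N = H$ are handled as follows. If $N = 1$ then the second projection is injective, so $G \cong K$. If $N$ is proper nontrivial, Lemma~\ref{diMin}(i) applied to $G/N \cong K$ gives $\di(G) \le \di(K) + \di(N)$, with $\di(N) \le 1$ except when $N = A_6$ (in which case $\di(A_6) = 2$). The remaining case $N = H$ forces $G = H \times K$; here I would construct an explicit invariable generating set. Fix an invariable generating set $\{h_1,\ldots,h_m\}$ of $H$ in which $h_1$ has normal closure equal to $H$ (a transposition in $S_3$, an order-$3$ element in $7{:}3$, any nontrivial element of the simple group $L_3(2)$, or an odd permutation in $S_6$), and an invariable generating set $\{k_1,\ldots,k_n\}$ of $K$. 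The set
\[
\{(h_1, 1),\, (h_2, k_1),\, \ldots,\, (h_m, k_{m-1}),\, (1, k_m),\, \ldots,\, (1, k_n)\}
\]
has cardinality $\max(m, n+1)$. Under any conjugates each coordinate projection still invariably generates its factor, so the generated subgroup $X$ is subdirect in $H \times K$, and $(h_1^{g}, 1) \in X$ forces $\pi_H(X \cap (H \times 1))$ to be a normal subgroup of $H$ containing a conjugate of $h_1$, hence equal to $H$; by Goursat this gives $X = H \times K$. The count $\max(m, n+1)$ is at most $\di(K)+1$ for $H \ne S_6$ and at most $\di(K)+2$ for $H = S_6$, as required.

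Parts (ii), (iii) and (iv) follow by combining Lemma~\ref{Subd1Lemma} with the lists of quotients of the groups involved. The groups $7{:}3$ and $L_3(2)$ share no nontrivial quotient with $S_3$, $S_6$, or $K = 3^t{:}2$ (whose quotients are only $C_2$ and the groups $3^k{:}2$), so Lemma~\ref{Subd1Lemma} immediately yields the matching upper bounds $\di(G) \le \max(\di(H), \di(K))$, while the surjection $G \twoheadrightarrow K$ gives the lower bound $\di(G) \ge \di(K)$ needed for equality. The subcases of (ii) and (iii) with $K = S_6$ share the $C_2$-quotient with $S_3$ or $S_6$, and I would handle them by a case split on $N$ combined with the construction from (i), choosing generators of appropriate parity in $S_3$ or $S_6$ to evade the unique $C_2$-diagonal maximal subgroup. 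Part (v) uses Proposition~\ref{2less3Prop} to reduce $H$ to $Sp_4(2) \cong S_6$, after which part (i) supplies the upper bound $\di(G) \le t+2$.

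The main obstacle will be the matching lower bound $\di(G) \ge t+2$ in part (v), since the naive bound $\di(G) \ge \max(\di(H), \di(K)) = t+1$ is one short. My plan is to enumerate the maximal subgroups of $G$ arising from the two Goursat subdirect products of $S_6 \times K$: the maximals $M \times K$ for each maximal $M$ of $S_6$, the maximals $S_6 \times 3^t$ and $S_6 \times (L \rtimes C_2)$ for each $1$-dimensional $\mathbb{F}_3$-subspace $L$ of $3^t$, and the unique $C_2$-diagonal. Any invariable generating set must contain enough elements whose $K$-projections span $3^t$ as an $\mathbb{F}_3$-space (to cover all $(L \rtimes C_2)$-maximals), plus an element whose $K$-projection is an involution (to cover $S_6 \times 3^t$), plus elements whose $S_6$-projections form an invariable generating set of $S_6$ of parities compatible with avoiding the $C_2$-diagonal; a careful parity-counting argument should show that these constraints cannot be met by fewer than $t+2$ generators, establishing the lower bound.
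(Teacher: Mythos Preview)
Your approach to parts (i)--(iv) is sound and close to the paper's; in particular your use of Lemma~\ref{Subd1Lemma} for (iv) is clean, since the irreducible $H \le GL_3(2)$ are $C_7$, $7{:}3$, $L_3(2)$, none of which has $C_2$ as a quotient, so there is indeed no common nontrivial quotient with $K = 3^t{:}2$.

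The genuine gap is in part (v), and it lies in the \emph{upper} bound, not the lower. First, applying part (i) with $H = S_6$ yields only $\di(G) \le \di(K)+2 = (t+1)+2 = t+3$, one more than you need. Second, Proposition~\ref{2less3Prop} does not reduce the problem to $H = S_6$: it only tells you that every other irreducible $H \le GL_4(2)$ satisfies $\di(H) \le 2$, which by itself says nothing about $\di(G)$. Whenever such an $H$ has a $C_2$ quotient --- and many do, e.g.\ $S_5$, $3.S_5$, and the imprimitive irreducible subgroups --- it shares that quotient with $K$, so Lemma~\ref{Subd1Lemma} is unavailable and you must construct an explicit invariable generating set. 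The paper does exactly this by MAGMA computation: for each relevant pair $(H, G\cap H)$ it finds invariable generators $x,y,z$ of $G\cap H$ with $|x|$ odd and $(|y|,3)=1$, then splices them with generators of $K$, exploiting those coprimality constraints to merge elements and save one generator.

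Finally, your planned lower bound $\di(G) \ge t+2$ is both unnecessary and false in general. The paper only ever uses the inequality $\di(G) \le t+2$, and its own proof of (v) establishes only that. For a counterexample to equality, take $H = A_7 \le GL_4(2)$: since $A_7$ is simple it shares no nontrivial quotient with $K$, so Lemma~\ref{Subd1Lemma} gives $\di(G) \le \max(2, t+1) = t+1$. Read the ``$=$'' in (v) as ``$\le$''.
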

\begin{proof} We first prove (i). Since $G\le H\times K$ is subdirect, $G\cap H$ is a normal subgroup of $H$. If $G\cap H$ is cyclic, then $\di(G)\le \di(K)+1$, as needed. 

So assume that $G\cap H$ is noncyclic, and suppose first that $H\not\cong S_{6}$. Then $G=H\times K$; let $\left\{k_{1},\hdots,k_{t}\right\}$ be an invariable generating set for $K$. If $H=L_{3}(2)$, then take $x$ to be any element of $H$ of order $7$, and take $y$ to be any element of $H$ of order $4$; if $H=7:3$, then take $x$, $y\in H$ of order $3$ and $7$ respectively; and if $H=S_{3}$, then take $x$, $y\in H$ of order $2$ and $3$ respectively. Then $\left\{x,yk_{1},\hdots,k_{t}\right\}$ invariably generates $G$. Indeed, if $H=L_{3}(2)$, then $H=\langle x,x^{y}\rangle$, since $y\not\in N_{H}(\langle x\rangle)$, and the only proper subgroup of $H$ containing $x$ is of the form $7:3$; if $H=7:3$ or $S_{3}$, then clearly $H=\langle x,x^{y}\rangle$. Since our choice of elements, in each case, depended only on the orders, replacing $x$ and $y$ by any $H$-conjugates yields the same result. 

Assume now that $H=S_{6}$. If $G\cap H<S_{6}$, then $G\cap H=A_{6}$, and the result follows from Lemma \ref{diMin} part (i). So assume that $G=H\times K$, and let $x$, $y$, $z\in H$ be a $6$-cycle, a $5$-cycle and a $3$-cycle, respectively. Then $\left\{x,y,z\right\}$ is an invariable generating set for $H$. Also, let $g$, $h$, $g_{1}$, $g_{2}$, $\hdots$, $g_{t}\in G$. Note that $X:=\langle x^{g},y^{h},(zk_{1})^{g_{1}},k_{2}^{g_{2}}\hdots,k_{t}^{g_{t}}\rangle\le G$ is a subdirect product of $G$, with $X\cap H>1$ and $X\cap\neq A_{6}$ (since $X\cap H$ contains a $6$-cycle). Since $X\cap H\unlhd H$, we have $X\cap H=S_{6}$, so $X=G$ and part (i) follows.

Next, we prove (ii). So assume that $H=S_{3}$, and take $h_{1}$, $h_{2}\in H$ of orders $2$ and $3$ respectively. Assume first that $K=L_{3}(2)$ or $7:3$. It is easily seen that the only subdirect product of $H\times K$, in each case, is the full direct product. So $G=H\times K$. If $K=L_{3}(2)$, then choose $x$ and $y$ in $K$ of orders $7$ and $4$ respectively, and if $K=7:3$ then choose $x$ and $y$ in $K$ of orders $3$ and $7$ respectively. Then one can easily see that $\left\{h_{1}x,h_{2}y\right\}$ is an invariable generating set for $G$, which gives us what we need. So assume that $K=S_{6}$, and let $x$, $y$ and $z$ be cycles in $H$ of length $3$, $5$ and $6$ respectively. Then it is easy to see that $x$, $y$ and $z$ invariably generate $K$. If $G\cap H=1$, then the result is clear. If $G\cap H=A_{3}$, then take $X=\left\{x,h_{2}y,z\right\}$, and if $G\cap H=H$, then take $X=\left\{x,h_{2}y,z\right\}$. Clearly $X$ invariably generates $G$ in each case, and this completes the proof of (ii).

We now consider (iii). So $H=S_{6}$. If $K=7:3$ or $L_{3}(2)$ then the result follows from Proposition \ref{Subd1Lemma}, so assume that $K=S_{6}$. If $G\cap H=1$, then $\di(G)=\di(K)=3$, so assume that $G\cap H=A_{6}$ or $S_{6}$. Let $k_{1}$, $k_{2}$, $k_{3}\in G$ such that $(G\cap H)k_{1}$ is a $3$-cycle, $(G\cap H)k_{2}$ is a $5$-cycle, and $(G\cap H)k_{3}$ is a $6$-cycle in $G/G\cap H\cong S_{6}$. Then $\left\{(G\cap H)k_{1},(G\cap H)k_{2},(G\cap H)k_{3}\right\}$ is an invariable generating set for $G/G\cap H$. Now, since $A_{6}\le G\cap H$, we my assume, by replacing $k_{1}$, $k_{2}$ by suitable powers, that $k_{1}$ and $k_{2}\in G$ project onto a $5$-cycle and a $3$-cycle in $H$, respectively. If $G\cap H=H$, then choose $h\in H$ to be a $6$-cycle in $H$; otherwise, set $h:=1\in G\cap H$. Then $\left\{k_{1},k_{2},k_{3},h\right\}$ is an invariable generating set for $G$, and this proves (iii).
 
Finally, we prove (iv) and (v). So assume that $H\le GL_{n}(2)$ is irreducible, where $n=3$ or $4$, and that $K\cong 3^{t}:2$ with $Z(K)=1$. If $H$ and $K$ have no common nontrivial homomorphic images, then the result follows from Proposition \ref{Subd1Lemma}, so assume otherwise. Then, since the only normal subgroups of $K$ are the $3$-subgroups, we conclude that $H$ must have a nontrivial homomorphic image of order $3^{j}2$, where $j\ge 0$. Using the database of irreducible matrix groups in MAGMA, we have $n=4$, and $G\cap H\unlhd H$ contains elements $x$, $y$ and $z$ with $(|x|,2)=(|y|,3)=1$, such that $\left\{x,y,z\right\}$ invariably generates $G\cap H$. Choose $k_{1}$, $\hdots$, $k_{t}$, $k_{t+1}\in G$, such that $|(G\cap H)k_{i}|=3$ for $1\le i\le t$, $|(G\cap H)t_{i}|=2$, and $\left\{(G\cap H)k_{1},(G\cap H)k_{2},\hdots,(G\cap H)k_{t},(G\cap H)k_{t+1}\right\}$ invariably generates $G/G\cap H\cong K$. Also, let $L:=(G\cap H)\times (G\cap K)$, and suppose first that $L=G$. Then $G=H\times K$, since $G$ is subdirect, and hence we may assume that the $k_{i}$ are elements of $K$. It now follows easily that $\left\{z,yk_{1},k_{2},\hdots,k_{t}xk_{t+1}\right\}$ invariably generates $G$, which gives us hat we need.

So we may assume that $L<G$. Hence, $G/L$ is a common nontrivial homomorphic image of $H$ and $K$. As mentioned above, we must have $|G:L|=3^{j}2$, some $j\ge 0$. If $G\cap H$ is cyclic, then $\di(G)\le \di(K)+1=t+2$, so assume also that $G\cap H$ is a noncyclic normal subgroup of $H$. By direct computation, $G/L\cong H/G\cap H\cong K/G\cap K$ is isomorphic to either $C_{2}$ or $S_{3}$. Furthermore, apart from the case $(H,G\cap H)=(3^{2}:2,3^{2})$, where $|Z(H)|=3$, for each other pair $(H,G\cap H)$ satisfying these conditions, by direct computation we may choose an invariable generating set $\left\{x,y\right\}$ for $G\cap H$ such that $(|x|,3)=1$. Then if $\left\{g_{1},\hdots,g_{t-j}\right\}$ is an invariable generating set for $G\cap K$, then $\left\{y,xg_{1},\hdots,g_{t-j}\right\}$ is an invariable generating set for $L$. Also, since $j=0$ or $1$, it is clear that $\di(G/L)\le j+1$. Hence $\di(G)\le \di(G/L)+\di(L)\le t+2$. So we may assume that $(H,G\cap H)=(3^{2}:2,3^{2})$, where $|Z(H)|=3$; choose an invariable generating set $\left\{x,y\right\}$ for $G\cap H$ with $Z(G\cap H)=\langle y\rangle$. Also, take $k_{1}$, $k_{2}$, $\hdots$, $k_{t+1}$ to be as in the previous paragraph; by replacing $k_{t+1}$ by $k_{t+1}^{3}$ if necessary, we may assume that $k_{t+1}$ projects onto an element of order $2$ in $H$. Hence, $\left\{x,k_{1},k_{2},\hdots,yk_{t+1}\right\}$ is an invariable generating set for $G$, and this completes the proof.\end{proof}

Next, we consider the function $\di$ on direct products of nonabelian simple groups. Wiegold proves in \cite[Lemma 2]{Wie} that if $r$ is the number of direct factors in such a group $G$, then $d(G)\le 2+\lceil \log_{60}{r}\rceil$. However, in \cite{KLS}, it is shown that this bound fails when one replaces $d$ by $\di$. In fact, for each positive integer $n$, there is a $2$-generated group $G$ (which is a direct product of isomorphic nonabelian simple groups), such that $\di(G)>n$. What we do have, however, is the following.
\begin{Theorem}[{\bf\cite{KLS} Theorem 5.1}]\label{KLSSimple} Let $T$ be a nonabelian simple group.\begin{enumerate}[(a)]
\item If $T$ is not one of the groups $P\Omega^{+}(8,2)$ or $P\Omega^{+}(8,3)$, then there are two elements $s$, $t\in T$ such that $T=\langle s^{g_{1}},t^{g_{2}}\rangle$ for each choice of $g_{i}\in \Aut{(T)}$.
\item If $T=P\Omega^{+}(8,2)$ or $P\Omega^{+}(8,3)$, and if $T\le A\le \Aut{(T)}$, then there are elements $t\in T$, $s\in A$, such that $T\le \langle s^{g_{1}},t^{g_{2}}\rangle$ for each choice of $g_{i}\in A$.\end{enumerate}\end{Theorem}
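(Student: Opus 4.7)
My plan is to reduce the statement to a combinatorial condition on conjugacy classes in $T$, then verify it using the classification of finite simple groups and the known structure of their maximal subgroups.

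\textbf{Reformulation.} The condition $T=\langle s^{g_1},t^{g_2}\rangle$ for every choice of $g_1,g_2\in\Aut(T)$ is equivalent to the statement that no maximal subgroup $M$ of $T$ meets both $s^{\Aut(T)}$ and $t^{\Aut(T)}$. Hence (a) reduces to producing two $\Aut(T)$-classes $\widetilde C_s,\widetilde C_t$ of $T$ such that, for every maximal subgroup $M\le T$, at least one of $\widetilde C_s,\widetilde C_t$ is disjoint from $M$. A useful first step is to note that if $s$ has a prime order $r$ which does not divide $|M|$ for ``most'' maximal $M$, then the list of maximal subgroups one still has to rule out for $t$ is small.

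\textbf{Strategy by family.} For $T=\Alt(n)$ with $n\ge 5$, I would take $s$ to be an element whose cycle type rules out almost every maximal subgroup (for example a single cycle of prime length close to $n$, which by a result of Jordan forces any overgroup that is not $\Alt(n)$ or $\Sym(n)$ to be intransitive or imprimitive of a very restrictive shape), and then choose $t$ with a cycle structure that is incompatible with these few remaining intransitive/imprimitive classes; the analysis is uniform for $n$ large and reduces to a finite check for small $n$. For $T$ of Lie type, I would use the Aschbacher/maximal-subgroup classification: pick $s$ to be a regular semisimple element lying in the normalizer of a ``large'' (e.g.\ Singer-type) torus, so that by known results on elements of such orders, $s$ lies only in conjugates of this normalizer and in a very short list of subfield or reducible subgroups; then pick $t$ to be a unipotent (or suitably chosen semisimple) element with Jordan type chosen to kill exactly the remaining maximal subgroup classes, checking using the known restrictions on cycle/Jordan structure for elements of each Aschbacher class. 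For the sporadic groups the conclusion can be verified directly from the \textsf{ATLAS} character tables and maximal-subgroup lists.

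\textbf{The $P\Omega^+_8$ obstruction and part (b).} The main obstacle is the pair $T=P\Omega^+_8(q)$ with $q\in\{2,3\}$, where the triality outer automorphism of order $3$ fuses three $T$-classes of maximal subgroups (the natural, spin and dual $\Omega_7$-type stabilizers) and similarly fuses semisimple conjugacy classes into $\Aut(T)$-orbits of size three. Thus for any $s\in T$, the $\Aut(T)$-class $\widetilde C_s$ is inflated and is forced to meet at least one of the triality-fused maximal classes; the same problem occurs for $t$, so one cannot arrange disjointness while keeping both elements inside $T$. The fix in part (b) is to pick $s$ inside $A$ (specifically in the coset of $A$ containing a triality element, or an involution which is not triality-stable); such an element permutes the three classes and lies in a maximal subgroup of $T$ only of a very restricted type. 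Then $t\in T$ can be chosen to miss precisely those restricted maximal subgroups, so that $T\le \langle s^{g_1},t^{g_2}\rangle$ for every $g_1,g_2\in A$. Verification here is by direct computation in the two groups $P\Omega^+_8(2)$ and $P\Omega^+_8(3)$ together with their outer automorphism groups; in both cases the character tables and class fusion under $\Aut(T)$ suffice.
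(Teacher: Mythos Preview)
The paper does not prove this theorem; it is quoted verbatim as \cite[Theorem 5.1]{KLS} and used as a black box (to deduce Corollary \ref{KLSSimpleCor} and in the proof of Proposition \ref{Subd2Prop}). So there is no ``paper's own proof'' to compare against.

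As a proof outline your proposal is reasonable and broadly follows the shape of the argument in Kantor--Lubotzky--Shalev: reformulate in terms of $\Aut(T)$-classes avoiding maximal subgroups, then go family by family through the classification, using Singer/large-torus elements in Lie type, cycle-type arguments for $\Alt(n)$, and \textsf{ATLAS} data for sporadics. Your reformulation is correct: $T=\langle s^{g_1},t^{g_2}\rangle$ for all $g_i\in\Aut(T)$ is equivalent to no maximal $M\le T$ meeting both $\Aut(T)$-classes.

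That said, what you have written is a strategy, not a proof. Two places where you would need substantially more work: (i) for groups of Lie type, the claim that a Singer-type element ``lies only in a very short list'' of maximal subgroups is precisely the hard content (this is the subject of a long series of papers by Guralnick, Malle, Fulman and others on derangements and overgroups of tori), and you would need to invoke or reproduce those results explicitly; (ii) your explanation of the $P\Omega^+_8(q)$ obstruction is heuristic --- the assertion that triality forces \emph{every} $\Aut(T)$-class to meet one of the fused maximal classes is plausible but not argued, and in fact the failure for $q=2,3$ specifically (and not for larger $q$) is more delicate than what you describe. If you want to actually prove this rather than cite it, you should consult the original \cite{KLS} (and \cite{GurMalle}, which proves the closely related Theorem \ref{SimpleTheorem}) for the detailed case analysis.
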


We now prove a consequence of Theorem \ref{KLSSimple}.
\begin{Corollary}\label{KLSSimpleCor} Let $G=T_{1}\times T_{2}\times\hdots\times T_{r}$ be a direct product of isomorphic nonabelian simple groups $T_{i}$. If $r$ is even then $\di(G)\le r$, and if $r$ is odd then $\di(G)\le r+1$.\end{Corollary}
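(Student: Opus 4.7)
My plan is to reduce the corollary to the single key claim $\di(T \times T) \le 2$ for every nonabelian finite simple group $T$, and then induct on $r$ in steps of two. For $r = 1$, Theorem \ref{SimpleTheorem} gives $\di(T) \le 2 = r + 1$. For $r = 2$, the bound $\di(T \times T) \le 2 = r$ is the key claim. For $r \ge 3$, set $N := T_{r-1} \times T_r \unlhd G$; Lemma \ref{diMin}(i) yields
\[
\di(G) \le \di(G/N) + \di(N) \le \di(T_1 \times \cdots \times T_{r-2}) + 2,
\]
and since $r - 2$ has the same parity as $r$, the inductive hypothesis delivers $\di(G) \le (r-2) + 2 = r$ when $r$ is even and $\di(G) \le (r-1) + 2 = r + 1$ when $r$ is odd.

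To prove the key claim, I plan to invoke Theorem \ref{KLSSimple}. In case (a), this provides $s, t \in T$ with $T = \langle s^{g_1}, t^{g_2}\rangle$ for all $g_1, g_2 \in \Aut(T)$. Set $x_1 := (s, t)$ and $x_2 := (t, s)$ in $T \times T$; for arbitrary $(a, b), (c, d) \in T \times T$ the subgroup
\[
H := \langle (s^a, t^b),\, (t^c, s^d) \rangle
\]
has both coordinate projections equal to $T$ by the choice of $s,t$, so $H$ is subdirect in $T \times T$. By Goursat's lemma, either $H = T \times T$ or $H = \{(g, \phi(g)) : g \in T\}$ for some $\phi \in \Aut(T)$; in the latter case the identity $\phi(s^a) = t^b$ forces $s$ and $t$ to lie in a common $\Aut(T)$-orbit. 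So I would pick $s,t$ from different $\Aut(T)$-orbits---most easily by enforcing $|s| \ne |t|$---to kill the twisted-diagonal alternative and conclude $H = T \times T$.

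The main obstacle is securing the non-$\Aut(T)$-conjugacy of the pair $\{s,t\}$: this strengthening of Theorem \ref{KLSSimple}(a) is not stated there explicitly, and would require either a re-examination of the Kantor--Lubotzky--Shalev construction or a supplementary case-by-case verification that in each family of nonabelian finite simple groups two conjugacy classes of distinct element orders can simultaneously realize the invariable generation property. The two exceptional cases $T = P\Omega^+(8, q)$ with $q \in \{2, 3\}$ handled by Theorem \ref{KLSSimple}(b) demand separate treatment, since one of the ``generators'' there lies in $\Aut(T) \setminus T$; I would adapt the argument by combining Theorem \ref{SimpleTheorem} (which still supplies a pair in $T$ with $\di(T) \le 2$) with a direct inspection of $\Out(T) \cong \Sym(3)$ to eliminate the handful of triality-twisted diagonals in $T \times T$ that could potentially swallow both generators.
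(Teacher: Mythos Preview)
Your reduction to $\di(T\times T)\le 2$ and the choice of invariable generators $(s,t),(t,s)$ are exactly the paper's approach. The one place you flag as an obstacle is not one: the non-$\Aut(T)$-conjugacy of $s$ and $t$ is already forced by Theorem~\ref{KLSSimple}(a) itself. Indeed, if $t=s^{\gamma}$ for some $\gamma\in\Aut(T)$, then taking $g_{1}=\gamma$ and $g_{2}=1$ in the statement of part~(a) gives $\langle s^{g_{1}},t^{g_{2}}\rangle=\langle t,t\rangle=\langle t\rangle\neq T$, a contradiction. This is precisely how the paper disposes of the twisted-diagonal case, so no re-examination of the Kantor--Lubotzky--Shalev construction or case-by-case check is needed.

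For the two exceptions $T\in\{P\Omega^{+}(8,2),P\Omega^{+}(8,3)\}$, the paper simply finds by direct computation a pair $s,t\in T$ with $|s|\neq|t|$ that invariably generates $T$ (under inner conjugation); the order condition then rules out twisted diagonals. Your proposed adaptation via Theorem~\ref{SimpleTheorem} plus an analysis of triality-twisted diagonals would also work, but the computational route is shorter.
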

\begin{proof} By Theorem \ref{KLSSimple}, our claim will follow if we can prove that $\di(T_{1}\times T_{2})=2$. So write $T=T_{1}$ and assume that $G=T^{2}$. If $T$ is not $P\Omega^{+}(8,2)$ or $P\Omega^{+}(8,3)$, then let $s,t\in T_{1}$ be as in Theorem \ref{KLSSimple}; if $T=P\Omega^{+}(8,2)$, then by direct computation we can choose $s,t\in T$ such that $|s|\neq |t|$, and $T$ is invariably generated by $\left\{s,t\right\}$. We claim that $\left\{(s,t),(t,s)\right\}$ invariably generates $G$. To see this, let $(g_{1},g_{2})$, $(h_{1},h_{2})\in G$, and set $X:=\langle (s^{g_{1}},t^{g_{2}}),(t^{h_{1}},s^{h_{2}})\rangle$. Let $\rho_{i}:G\rightarrow T_{i}$ be the projection maps. Then $\rho_{i}(X)=T_{i}$ for each $i$ by Theorem \ref{KLSSimple}, so either $X=G$, or $X$ is a diagonal subgroup of $G$.

So assume that $X$ is a diagonal subgroup of $G$. Then $X=\left\{(y,y^{\alpha})\text{ : }y\in T_{1}\right\}$, for some $\alpha\in \Aut{(T_{1})}$. But then, in particular, $t=s^{g_{1}\alpha g_{2}^{-1}}$. This contradicts part (a) of Theorem \ref{KLSSimple} in the case $T\neq P\Omega^{+}(8,2)$, $P\Omega^{+}(8,3)$, while the contradiction is clear if $T$ is one of $P\Omega^{+}(8,2)$, $P\Omega^{+}(8,3)$; indeed, $|s|\neq |t|$ in these cases. The result now follows.\end{proof}

We now turn to the symmetric group.
\begin{Proposition}\label{SymmetricGroup}
\begin{enumerate}[(i)]
\item Let $G\cong S_{n}$. Then $\di(G)\le 2$, except that $\di(S_{6})=3$;
\item  Let $A\cong \Aut{(A_{6})}$, $P\Gamma L_{2}(8)$, $PGL_{2}(7)$, $PGL_{4}(3)$ or $L_{3}(4).m$, where $m=2$, $3$ or $6$. Then $\di(A)\le 2$.\end{enumerate}\end{Proposition}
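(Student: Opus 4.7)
The plan is to split the argument into two parts. For~(i), the exceptional value $\di(\Sym(6))=3$ is already recorded in Proposition~\ref{TransProp}(i), so it remains to show $\di(\Sym(n))\le 2$ for $n\ne 6$. The strategy for large $n$ is to exhibit an explicit invariable generating pair, and for the handful of small $n$ where the construction below fails, to fall back on a direct MAGMA computation using the derangement criterion from \cite[Lemma~2.1]{KLS} as in the proof of Proposition~\ref{TransProp}. Concretely, for $n\ge 8$ with $n\ne 6$, one uses Bertrand's postulate (together with a short check for the few borderline values) to select a prime $p$ with $n/2<p\le n-3$. Let $x=c\cdot t\in\Sym(n)$, where $c$ is a $p$-cycle and $t$ is a transposition whose support is disjoint from that of $c$, and let $y\in\Sym(n)$ be an $n$-cycle.

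Given any $g_1,g_2\in\Sym(n)$, set $H:=\langle x^{g_1},y^{g_2}\rangle$. The element $y^{g_2}\in H$ forces $H$ to be transitive. Because $p$ is odd, $(x^{g_1})^2$ is a $p$-cycle and $(x^{g_1})^p$ is a transposition, so $H$ contains elements of both cycle types. A short block-argument --- using that any block size $b$ for an imprimitive action of $H$ must divide $n$, and that a $p$-cycle with $p>n/2$ yields a contradiction both when it acts trivially on blocks and when it acts nontrivially --- shows that $H$ is primitive. Jordan's classical theorem (a primitive subgroup of $\Sym(n)$ containing a cycle of prime length $p$ with $p\le n-3$ contains $\Alt(n)$) then gives $\Alt(n)\le H$, and the transposition in $H$ upgrades this to $H=\Sym(n)$.

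For~(ii), each of the listed almost simple groups $A$ has small enough order that the bound $\di(A)\le 2$ can be verified by a direct MAGMA computation. For each such $A$, one exhibits an explicit pair $(x,y)$ (typically with $|x|$ and $|y|$ coprime, chosen from among the element orders of $A$) and uses the derangement criterion of~\cite[Lemma~2.1]{KLS} to check that for every maximal subgroup $M\le A$, at least one of $x,y$ contains no conjugate lying in $M$. The maximal subgroups of the groups in question are all well documented (via the ATLAS or the MAGMA database of almost simple groups), so the verification is mechanical.

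The main obstacle I foresee is not conceptual but rather case-analytic. In~(i), the Jordan--Bertrand construction breaks down for $n\in\{2,3,4,5,7\}$ (no prime lies in the required window $(n/2, n-3]$), so these values must be handled individually; fortunately they are all small enough to be trivially verified. In~(ii), the listed almost simple groups have heterogeneous outer automorphism structures and admit no uniform structural construction of an invariable generating pair, so each group must be treated separately by computation.
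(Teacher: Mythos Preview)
Your proposal is correct and follows essentially the same approach as the paper. For part~(i), both you and the paper pair an $n$-cycle with an element built from a $p$-cycle (for a prime $p$ with $n/2<p<n-2$) together with a disjoint transposition; the only cosmetic difference is that the paper attaches the transposition only when $n$ is odd (since for even $n$ the $n$-cycle is already odd), and packages the generation step as a citation to Lemma~8.20 and Theorem~8.23 of \cite{IsaacsFGT} rather than spelling out the block/Jordan argument as you do. For part~(ii), both proofs are direct MAGMA verifications; the paper additionally records the explicit conjugacy class data of the invariable generating pairs found.
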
 
\begin{proof} \begin{enumerate}[(i)]\item If $n\le 7$ then the result is easy, so assume that $n\ge 8$. Then we can choose a prime $p$ with $n/2<p<n-2$ (this is clear for $8\le n\le 11$, and follows from \cite[Theorem 1.3]{elba} if $n\ge 12$); in particular, $p$ is odd. Let $x$ be any $n$-cycle in $G$, and let $y$ be any $p$-cycle. If $n$ is even, then set $z:=1\in G$; otherwise, let $z$ be any transposition in $G$. Then $G=\langle x,yz\rangle$ by Lemma 8.20 and Theorem 8.23 in \cite{IsaacsFGT}. This completes the proof.
\item We prove the claim by direct computation, using MAGMA. In each of the cases $A=\Aut{(A_{6})}$, $P\Gamma L_{2}(8)$, $PGL_{2}(7)$, $PGL_{4}(3)$, $L_{3}(4).2$, $L_{3}(4).3=PGL_{3}(4)$ or $L_{3}(4).6$, $A$ has two elements $x$ and $y$, which invariably generate $G$. The lists $[|x|,i(x);|y|,i(y)]$, where $i(t)$ denotes the conjugacy class number of the element $t$ of $A$ in MAGMA, is $[8,11;10,13]$, $[7,8; 9,11]$, $[6,6;8,9]$, $[26,39;40,43]$, $[5,8; 14,14]$, $[15,18;21,22]$ and $[8,14;21,20]$ respectively.\end{enumerate}\end{proof} 

\begin{Proposition}\label{PrimIrr43Prop} Assume that Theorem \ref{CompRedTheorem} Part 1 holds for $F=\mathbb{F}_{2}$, and let $G\le GL_{4}(3)$ be primitive and irreducible. Then $\di(G)\le 4$.\end{Proposition}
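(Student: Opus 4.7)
The plan is to apply Lemma \ref{DefLemma} to the weakly quasiprimitive group $G$: there is a characteristic subgroup $K_1$ of $G$, isomorphic to a subgroup of $GL_{4/f}(\mathbb{F}_{3^f})$ for some $f\in\{1,2,4\}$, with $G/K_1$ cyclic of order dividing $f$. Since a cyclic group has invariable generation number $1$, Lemma \ref{diMin}(i) gives $\di(G)\le \di(K_1)+1$, and it suffices to show $\di(K_1)\le 3$ in each case.

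Case $f=4$ is immediate: $K_1\le GL_1(\mathbb{F}_{81})\cong C_{80}$ is cyclic, so $\di(K_1)=1$. For case $f=2$, $K_1\le GL_2(9)$ is weakly quasiprimitive with all characteristic abelian subgroups inside the scalars. Corollary \ref{FStarCor}(i) forces $\prod q_i^{m_i}\prod t_j^{u_j}\mid 2$, so $F^*(K_1)$ is the central product of $Z(K_1)$ with either an extraspecial $2$-group of order $8$ (giving $K_1/Z(F^*(K_1))\le Sp_2(2)\cong S_3$) or a single quasisimple factor $T$ with $\overline{T}\cong A_5$ and $A_j\le C_2$ (by Corollary \ref{FStarTCors2}(ii), since $t_j=2$). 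In the extraspecial sub-case I will apply Lemma \ref{diMin}(ii) to the abelian normal series $Z(K_1)\le F^*(K_1)\le K_1$, using $d_{K_1}(Z(K_1))=1$, $d_{K_1}(F^*(K_1)/Z(K_1))=1$ (irreducibility), and $\di(K_1/F^*(K_1))\le 1$, to conclude $\di(K_1)\le 3$. In the quasisimple sub-case, Theorem \ref{SimpleTheorem} gives $\di(T/Z(T))\le 2$, and combining with $\di(Z(K_1))=1$ again yields $\di(K_1)\le 3$.

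Case $f=1$ is the main part. Here $K_1=G$ is primitive irreducible in $GL_4(3)$, and Proposition \ref{83Cor} restricts the composition-factor list to one of six patterns. If the list contains $L_4(3)$ or $U_4(2)$, then $G/Z(G)$ is almost simple with outer part of order at most $2$, and Theorem \ref{SimpleTheorem}, Corollary \ref{KLSSimpleCorOut}, and Proposition \ref{SymmetricGroup}(ii) (for $PGL_4(3)$) give $\di(G)\le 4$; similarly, if the list contains $A_6$ or $A_5$, $G$ modulo its centre is a subgroup of $\Aut(A_6)$ or $\Sym(5)\times (\text{something small})$, and the same collection of results suffices. For the soluble patterns $[2^8,3^2]$ and $[2^7,5]$, the assumed $\mathbb{F}_2$ case of Theorem \ref{CompRedTheorem} Part 1 is invoked. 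In the $[2^8,3^2]$ pattern, $G$ contains an extraspecial normal subgroup $O_2(G)\cong 2^{1+4}$ with $Z(O_2(G))=Z(G)\cong C_2$, and $G/O_2(G)$ embeds in $Sp_4(2)\cong S_6$ as a completely reducible subgroup of $GL_4(\mathbb{F}_2)$; the hypothesis gives $\di(G/O_2(G))\le 2$, and since $O_2(G)/Z(G)$ is an irreducible $G$-module and $Z(G)\cong C_2$, two successive applications of Lemma \ref{diMin}(ii) yield $\di(G)\le 2+1+1=4$. The $[2^7,5]$ pattern is essentially metacyclic: $G$ lies inside the normaliser $\Gamma L_1(3^4)\cdot C_2$ of a Singer cycle, and a direct application of Lemma \ref{diMin} to the normal series through $Z(G)$ and the Singer subgroup delivers $\di(G)\le 3$.

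The main obstacle is the $[2^8,3^2]$ sub-case, where one must verify that the action of $G/O_2(G)$ on the Frattini quotient $O_2(G)/Z(G)\cong\mathbb{F}_2^4$ is completely reducible (it is, because $G$ is primitive so the action is irreducible), and then ensure that $G/O_2(G)$ does not fall into the exceptional cases $B_4$ or $Sp_4(2)$ of Theorem \ref{CompRedTheorem} Part 1 for $\mathbb{F}_2$; this is ruled out by the composition-factor pattern, since $|G/O_2(G)|$ divides $72$ while $|Sp_4(2)|=720$ and $B_4$ has the wrong order.
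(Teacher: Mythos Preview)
Your argument has a genuine gap in the extraspecial case, and this is precisely where the paper expends most of its effort.

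In the $f=1$, $[2^8,3^2]$ pattern you assert that $O_2(G)/Z(G)\cong \mathbb{F}_2^4$ is an \emph{irreducible} $G$-module, justifying this by ``$G$ is primitive''. But primitivity of $G$ inside $GL_4(3)$ says nothing about irreducibility of the conjugation action on the Frattini quotient of $O_2(G)$. Lemma~\ref{FStarO} only guarantees that this action is \emph{completely reducible}; it can and does split. Your bound $\di(G)\le 2+1+1$ relies on $d_G(O_2(G)/Z(G))=1$, which fails when the module is reducible. The paper's proof is largely a careful submodule analysis of exactly this situation: it picks $u\in E$ of order $4$, sets $U=\langle u\rangle^G$, and branches on whether $G$ centralises $U/Z$ and on $\dim(U/Z)$, in several sub-cases constructing explicit invariable generating sets. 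You have not supplied any of this.

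There are knock-on errors. Your dismissal of the $B_4$ exception by ``wrong order'' is incorrect: $|B_4|=18$ divides $72$, so order alone does not rule it out. Your treatment of the $[2^7,5]$ pattern as lying in the normaliser of a Singer cycle is also wrong: if $G$ normalised a Singer cycle then $f(G)=4$, not $1$; in fact with $f=1$ the soluble $[2^7,5]$ groups again have $O_2(G)\cong 2^{1+4}$ (now with $G/O_2(G)$ of order dividing $20$), and the same extraspecial analysis is required. The same irreducibility issue arises in your $f=2$ extraspecial sub-case: $K_1/F^*(K_1)\le Sp_2(2)\cong S_3$ can have order $1$ or $2$, in which case the action on $F^*(K_1)/Z(K_1)\cong 2^2$ is reducible and $d_{K_1}$ of that quotient is $2$, not $1$; your chain then gives only $\di(K_1)\le 4$ and hence $\di(G)\le 5$.

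The overall architecture (split on $f$, use the generalised Fitting subgroup structure, invoke the $\mathbb{F}_2$ hypothesis for the top quotient) matches the paper, but the heart of the argument---controlling $d_G(E/Z)$ when $E/Z$ is reducible---is missing.
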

\begin{proof} Let $f=f(G)$, $K_{1}:=K_{1}(G)$. Then $G\le GL_{4/f}(2^{f}).f$, $K_{1}:=G\cap GL_{4/f}(2^{f})$ is irreducible and weakly quasiprimitive, and all characteristic abelian subgroups of $K_{1}$ lie in the scalar subgroup $Z$ of $K_{1}$. Using Lemma \ref{FStarLemma} and Corollary \ref{FStarCor}, we know the possible structures of $K_{1}/Z$; we now consider each case. 

Suppose first that $G$ has shape $E.X$, where $E$ is extraspecial of order $2^{1+4}$, and $X\le Sp_{4}(2)$ is completely reducible. Let $u$ be an element of $E$ of order $4$, so that $Z=\langle u^{2}\rangle$, and let $U=\langle u\rangle^{G}$. Assume first that $G$ does not centralise $U/Z$. Then $U/Z$ is a $G$-submodule of $E/Z$ of dimension $d$ at least $2$. Let $Z\le V\le E$ such that $V/Z$ is a $G$-complement for $U/Z$ in $E/Z$. If $d=3$ or $4$, or if $G$ does not centralise $V/Z$, then $\langle U,x\rangle^{G}=E$ for some $x\in V$ and hence, since $\di(X)\le 2$ by Theorem \ref{CompRedTheorem}, we have $\di(G)\le 4$. So assume that $G/Z$ acts trivially on $V/Z$. Then $G/U\cong 2^{2}.S_{3}$, where the action of $S_{3}$ on $2^{2}$ is trivial. It is easy to see that such a group $G/U$ can be invariably generated by $3$ elements, and hence $\di(G)\le 4$, as needed.

So we may assume that $G$ centralises $U/Z$. Thus, $\dim{(V/Z)}=3$. If $V/Z$ is irreducible as a $G$-module, then $\di(G)\le 4$ again, so assume that $V/Z$ is reducible. Then clearly $V/Z$ is a direct sum $V/Z=S/Z\times T/Z$, where $S/Z$ is a trivial $G$-module, and $T/Z$ is a $2$-dimensional $G$-module. Hence, $G/V\cong (E/V).(G/E)$, and $G/E\cong S_{3}$, acting trivially on $E/V\cong U/Z\cong 2$. Now, let $x$, $y\in G$ such that $|x|=3$, $Ex$ is a $3$-cycle in $G/E$, and $Ey$ is a $2$-cycle. Finally, let $v_{1}$, $v_{2}\in V$ such that $V/Z=\langle Zv_{1},Zv_{2}\rangle^{G}$. Then $\left\{ux,v_{1},v_{2},y\right\}$ in an invariably generating set for $G$, and this completes the proof in the case $G\le 2^{1+4}.Sp_{4}(2)$.      

When $f=1$, the only remaining possibility is $f=1$ and $G/Z$ is almost simple. Then, using the database of irreducible matrix groups in MAGMA, we have $(G/Z)\le S_{5}$, $A_{6}.2^{2}$, or $L_{4}(3).2=PGL_{4}(3)$. Each of these groups is invariably $2$-generated by Proposition \ref{SymmetricGroup}, so $\di(G)\le 4$.

So we may assume that $f=2$. Then $K_{1}/Z\le S_{4}$ or $K_{1}/Z$ is almost simple. Since $G/K_{1}$ is cyclic, it suffices to prove that $\di(K_{1}/Z)\le 2$, and this is clear when $K_{1}/Z\le S_{4}$. So assume that $K_{1}/Z$ is almost simple. Again using the database of irreducible matrix groups in MAGMA, we have $(K_{1}/Z)\le S_{5}$, $A_{6}.2^{2}$, or $L_{4}(3).2=PGL_{4}(3)$. The claim now follows from Proposition \ref{SymmetricGroup}, and this completes the proof.\end{proof} 

\section{Completely reducible linear groups and the proof of Theorem \ref{CompRedTheorem}}\label{ProofSection1}
Before proceeding to the proof of Theorem \ref{CompRedTheorem}, we need a lemma which is analogous to \cite[Lemma 4.1]{derek}. We remark that our proof follows the same strategy of the proof of the afore mentioned lemma in \cite{derek}. 
\begin{Lemma}\label{Lemma4.1} Let $G$ be a finite group, with a normal elementary abelian subgroup $N$ of order $p^{m}$, such that $N=C_{G}(N)$, and the action of $G/N$ on $N$ is completely reducible. Let $H$ be a subnormal subgroup of $G$, and assume that Theorem \ref{CompRedTheorem} holds for $F=\mathbb{F}_{p}$ and dimensions $n\le m$. Then \begin{enumerate}[(i)]
\item If $p=2$, then $\di(H)\le m$;
\item If $p=3$, then $\di(H)\le 3m/2$ if $m>1$, and $\di(H)\le 2$ if $m=1$;
\item If $p>3$, then $\di(H)\le 2m$.\end{enumerate}\end{Lemma}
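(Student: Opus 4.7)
The plan is to apply Lemma \ref{diMin}(ii) to the normal elementary abelian subgroup $K := H \cap N$ of $H$, and then bound each piece of the resulting inequality using the assumed case of Theorem \ref{CompRedTheorem}(1).

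First I would set up the structure. Since $N = C_G(N)$, the conjugation action of $H$ on $K$ factors through $L := H/K \cong HN/N$, which embeds as a faithful linear group in $GL(N) \cong GL_m(p)$. Because $H$ is subnormal in $G$, so is $HN/N$ in $G/N$, and iterating Clifford's theorem along a subnormal chain transfers complete reducibility from $G/N$ to $L$. Thus $L \le GL_m(p)$ is faithful and completely reducible, and in particular $|L|$ is coprime to $p$ by Maschke's theorem. The hypothesised case of Theorem \ref{CompRedTheorem}(1) then bounds $\di(L)$ by $m/2$, $m$, or $3m/2$ according as $p = 2$, $3$, or $>3$, with the $B_m$ and $S_6$ exceptions contributing an additional $+1$ in characteristic $2$.

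Next, Lemma \ref{diMin}(ii) yields
\[
\di(H) \;\le\; \di(L) \;+\; d_H(K) \;=\; \di(L) \;+\; d_L(K),
\]
since $K$ is abelian and the $H$-action on $K$ factors through $L$. To bound $d_L(K)$, I would decompose $N = N^L \oplus N^\perp$ into the $L$-fixed subspace and the sum of non-trivial isotypes; by complete reducibility, $K = (K \cap N^L) \oplus (K \cap N^\perp)$, giving $d_L(K) \le \dim(K \cap N^L) + d_L(K \cap N^\perp)$. On the faithful part $N^\perp$, the group $L$ embeds in $GL(N^\perp) \cong GL_b(p)$, where $b := \dim N^\perp$; and when $p = 2$ every simple summand of $N^\perp$ has $\mathbb{F}_2$-dimension at least $2$ (since $L$ has no non-trivial $\mathbb{F}_2$-characters, as $|\mathbb{F}_2^\ast| = 1$), giving $d_L(K \cap N^\perp) \le b/2$. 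For $p = 3$ and $p > 3$, analogous but weaker structural restrictions apply; the compensating mechanism is that $\di(L)$ is bounded in dimension $b$ rather than $m$, and one exploits the fact that when $d_L(K \cap N^\perp)$ is close to $b$, the multiplicities in small isotypes are large, forcing $L$ to lie in a small subgroup of $GL_b(p)$ with correspondingly small $\di(L)$.

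I expect the principal obstacle to be the final sum bound in characteristics $3$ and greater. For $p = 2$ the argument closes cleanly: $\di(L) + d_L(K) \le b/2 + \dim(K \cap N^L) + b/2 \le \dim N^L + b = m$. For $p = 3$ and $p > 3$, crude bounds give $\di(L) + d_L(K) \le 5b/2 + \dim N^L$, which exceeds the targets $3m/2$ and $2m$ when $\dim N^L$ is small relative to $b$; one must therefore trade off contributions isotype by isotype. In particular, the bound for $p = 3$ requires $m \ge 2$ (the $m = 1$ case is $\di(H) \le 2$, verified directly since then $H$ is a subgroup of the metacyclic group $N \rtimes C_2$), and the exceptions $B_m$ and $S_6$ for $p = 2$ are absorbed into the slack of the inequality. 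The two extreme cases $H \le N$ (so $L = 1$) and $N \le H$ (so $K = N$) serve as sanity checks at both ends.
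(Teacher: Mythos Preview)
Your overall shape for $p=2$ matches the paper's proof: split off the trivial isotype, note that $L$ embeds faithfully and completely reducibly into $GL(N^\perp)\cong GL_b(2)$, and combine $\di(L)\le b/2$ with $d_L(K\cap N^\perp)\le b/2$ and $\dim(K\cap N^L)\le m-b$. The paper organizes this slightly differently (it first reduces to $H=G$, then quotients out the trivial part $M=Z(G)$), but the content is the same.

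However, your assertion that the $B_m$ and $Sp_4(2)$ exceptions are ``absorbed into the slack of the inequality'' is wrong. Take $H=G$, $N^L=0$, $K=N$, and $L=G/N\cong B_m$. Then $b=m$, $\di(L)=m/2+1$, and $d_L(K\cap N^\perp)=d_L(N)=m/2$ (since $N$ is a direct sum of $m/2$ pairwise non-isomorphic two-dimensional irreducibles). Your bound gives $m+1$, not $m$. There is no slack here. The paper deals with this by an explicit construction: it chooses module generators $x_1,\dots,x_{m/2}$ for $N$ with $x_1\in C_G(g_1)$ for a suitable $3$-element $g_1$, and shows that $\{g_1x_1,g_2,\dots,g_{m/2},g,x_2,\dots,x_{m/2}\}$ invariably generates $G$, saving one generator by merging $x_1$ into $g_1$. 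You need something of this kind; the exception cannot be absorbed arithmetically. (For $Sp_4(2)$ the issue disappears because $N$ is then irreducible, so $d_L(N)=1$, but your stated bound $d_L(K\cap N^\perp)\le b/2$ does not see this.)

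For $p\ge 3$ your proposal is only a sketch, and the ``isotype by isotype tradeoff'' you allude to is not what the paper does. The paper's decomposition is different: with $M$ the sum of one-dimensional constituents and $L$ a complement, it sets $C:=C_G(L)$ and bounds $\di(G/C)$ via Theorem~\ref{CompRedTheorem} in dimension $m-l$, then bounds $\di(C/L)$ using that $C/N$ embeds in $(p-1)^l$ (so $\di(C/L)\le 2l$, or $\le l+1$ via Proposition~\ref{S3Prop} when $p=3$), and finally $d_G(L)\le(m-l)/2$. This centralizer splitting is the missing idea in your $p\ge 3$ outline; the naive bound $\di(L)+d_L(K)$ with $\di(L)\le\epsilon m$ and $d_L(K)\le m$ is indeed too weak, and refining $d_L(K)$ alone does not recover the paper's bound.

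Minor point: the claim that complete reducibility of $N$ as an $L$-module forces $|L|$ coprime to $p$ is false (e.g.\ $L=GL_2(p)$ on its natural module). You do not actually use it, so just delete it.
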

\begin{proof} As shown in the first paragraph of the proof of Lemma 4.1 in \cite{derek}, it suffices to prove the result for groups $G$ as in the statement of the lemma, rather than their subnormal subgroups. 

Let $M$ be the direct sum of the one-dimensional $G$-submodules of $N$, so that $|M|=p^{l}$, for some $l\le m$.

Suppose first that $p=2$. Then $M$ is a trivial $G$-module, so $M=Z(G)$ (since $N$ is self-centralising). It follows that $G/N$ acts faithfully on $N/M$, and hence, by complete reducibility, and since $\di(M)=l$, it suffices to prove the result for $G/M$. That is, we may assume that $M=1$. So all irreducible constituents of $N$ have dimension at least $2$, and hence $d_{G}(N)\le m/2$. The result now follows from Theorem \ref{CompRedTheorem}, unless $G/N\cong B_{m}$, where $B_{m}=3^{m/2}:2\le GL_{2}(2)^{m/2}$ is as defined in Theorem \ref{CompRedTheorem}. In this case, let $g_{1}$, $g_{2}$, $\hdots$, $g_{m/2}$, $g$ be elements of $G$, with the $g_{i}$ of $3$-power order, and $|Ng|=2$, such that $G/N$ is invariably generated by $Ng_{1}$, $\hdots$, $Ng_{m/2}$, $Ng$. Now, choose a generating set $x_{1}$, $\hdots$, $x_{m/2}$ for $N$ as a $G$-module, with $x_{1}\in C_{G}(g_{1})$. We claim that the set $\left\{g_{1}x_{1},g_{2},\hdots,g_{m/2},g,x_{2},\hdots,x_{m/2}\right\}$ invariably generate $G$. So let $h$, $h_{1}$, $h_{2}$, $\hdots$, $h_{m/2}$, $y_{2}$ ,$\hdots$, $y_{m/2}$ be elements of $G$. We need to prove that $G=H:=\langle (g_{1}x_{1})^{h_{1}},g_{2}^{h_{2}},\hdots,g_{m/2}^{h_{m/2}},g^{h},x_{2}^{y_{2}},\hdots,x_{m/2}^{y_{m/2}}\rangle$. Now, $x_{1}^{h_{1}}=[(g_{1}x_{1})^{h_{1}}]^{|g_{1}|}\in H$, and $(g_{1}^{2})^{h_{1}}=[(g_{1}x_{1})^{h_{1}}]^{2}\in H$ since $x_{1}$ centralises $g_{1}$, and $g_{1}$ has $3$-power order. Since $\left\{Ng_{1}^{2},Ng_{2},\hdots,Ng_{m/2},Ng\right\}$ is an invariable generating set for $G/N$, and $\left\{x_{1},x_{2},\hdots,x_{m/2}\right\}$ generates $N$ as a $G$-module, the claim follows, and hence $\di(G)\le m$, as needed.

So assume now that $p>2$, and let $L$ be a $G$-submodule of $N$ which complements $M$. Let $C:=C_{G}(L)$. Then $\di(G/C)\le \epsilon(m-l)$, by Theorem \ref{CompRedTheorem}, where $\epsilon=\epsilon(p):=1$ if $p=3$ and $\epsilon:=3/2$ if $p>3$. Now, $C/N$ acts faithfully on $N/L\cong M$, so $C/N\le GL(M)\cong (p-1)^{l}$. Hence, $\di(C/L)\le \di(C/N)+\di(M)\le 2l$. Also, if $p=3$ then $C/L$ is isomorphic to a subgroup of $3^{l}:2^{l}\cong \Sym{(3)}^{l}$, so Proposition \ref{S3Prop} implies that $\di(C/L)\le l+1$ in that case. Thus, since $d_{G}(L)\le (m-l)/2$, we have $\di(G)\le (\epsilon(p)+1/2)(m-l)+l+\delta(p)$, where $\delta(p):=1$ if $p=3$ and $\delta(p):=l$ otherwise. The result now follows, except when $p=3$ and $l\le 1$. If $l=0$, then $L=N$ and $d_{I}(G)\le d_{G}(L)+\di(G/N)\le 3m/2$ follows from Theorem \ref{CompRedTheorem}. If $l=1$ and $C/L$ is cyclic, then we're done. So we may assume that $C/L\cong S_{3}$. Now let $Y$ be a subset of $G$, of size $m-1$, whose image modulo $C$ invariably generates $G/C$; let $x_{1}$, $x_{2}$, $\hdots$, $x_{(m-1)/2}$ be a generating set for $L$ as a $G$-module; let $\sigma$ be an element of $C$ which reduces to a $3$-cycle modulo $L$; and let $\tau$ be an element of $C$ of order $2$. It is now easy to see that $Y\cup\left\{\sigma,x_{1}\tau,x_{2},\hdots,x_{(m-1)/2}\right\}$ is an invariable generating set for $G$, and this gives us what we need.\end{proof} 

\begin{Proposition}\label{dOuterSimple} Let $T$ be a nonabelian finite simple group, and let $H$ be a subgroup of $\Out{(T)}$. Then $d_{I}(H)\le 3$.\end{Proposition}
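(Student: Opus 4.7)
The plan is to proceed by the classification of finite simple groups together with the known structure of $\Out(T)$, handling the cases via a uniform reduction. I would start with the following general observation: suppose $\Out(T)$ admits a normal series $1 = N_0 \le N_1 \le N_2 \le N_3 = \Out(T)$ in which each factor $N_{i+1}/N_i$ is cyclic. Then for any subgroup $H \le \Out(T)$, the intersections $H \cap N_i$ furnish a normal series for $H$ whose factors $(H\cap N_{i+1})/(H\cap N_i)$ embed into the cyclic $N_{i+1}/N_i$, hence are themselves cyclic with $\di \le 1$. Iterated application of Lemma \ref{diMin}(i) then gives $\di(H) \le 3$.

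Most cases fall under this reduction. For $T$ alternating, $\Out(T)$ is trivial, cyclic of order $2$, or (in the single case $T = A_6$) elementary abelian of order $4$. For $T$ sporadic, $\Out(T)$ is trivial or has order $2$. For $T$ a simple group of Lie type other than $P\Omega_8^+(q)$, the standard decomposition $\Out(T) = \mathrm{Diag}(T) \cdot \mathrm{Field}(T) \cdot \mathrm{Graph}(T)$ expresses $\Out(T)$ as an extension of three cyclic groups: the diagonal and field automorphism groups are cyclic, and the graph automorphism group is cyclic of order dividing $2$ (or $3$, in type $E_6$). In every one of these subcases, the general observation above applies directly and yields $\di(H) \le 3$.

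The remaining case, which I expect to be the main obstacle, is $T = P\Omega_8^+(q)$, since triality makes the graph automorphism subgroup isomorphic to $\Sym(3)$ rather than cyclic. Here, as indicated in the discussion preceding Corollary \ref{KLSSimpleCorOut}, one has $\Out(T) \le \Sym(4) \times C_f$, where $C_f$ is the cyclic field-automorphism factor. Given $H \le \Out(T)$, I would set $K := H \cap (\Sym(4) \times \{1\})$, so that $K$ is a subgroup of $\Sym(4)$ and $H/K$ embeds into $C_f$ and is therefore cyclic with $\di(H/K) \le 1$.

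To finish the $D_4$ case, I need $\di(K) \le 2$ for every subgroup $K \le \Sym(4)$. The case $K = \Sym(4)$ is covered by Proposition \ref{SymmetricGroup}, while the proper subgroups of $\Sym(4)$ are exhausted by the cyclic groups (trivially invariably $1$-generated), $V_4$ (invariably generated by any two of its three involutions), $\Sym(3)$ (invariably generated by a transposition and a $3$-cycle), $D_8$ (invariably generated by a $4$-cycle and a transposition outside $\langle 4\text{-cycle}\rangle$), and $\Alt(4)$ (invariably generated by a $3$-cycle and a double transposition). A routine case check disposes of these. Lemma \ref{diMin}(i) then gives $\di(H) \le \di(K) + \di(H/K) \le 2 + 1 = 3$, completing the proof.
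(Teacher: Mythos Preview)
Your approach is essentially the paper's: both run through CFSG and the known structure of $\Out(T)$, reducing to the claim that $\Out(T)$ has a normal series of length at most three with cyclic factors (the paper phrases this as ``$\Out(T)$ modulo its cyclic group of field automorphisms is metacyclic, or isomorphic to $S_4$''). Your handling of the $D_4$ case is more explicit than the paper's and is correct.

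There is, however, a small gap in your Lie-type case. The assertion that $\mathrm{Diag}(T)$ is cyclic fails for $T = P\Omega_{2n}^{+}(q)$ with $n$ even, $n \ge 6$, and $q$ odd: there $\mathrm{Diag}(T) \cong Z_2 \times Z_2$, so the series $1 \le \mathrm{Diag} \le \mathrm{Diag}\cdot\mathrm{Field} \le \Out(T)$ does not have all factors cyclic. The repair is immediate: since $p$ is odd and $\mathrm{Diag}$ has exponent $2$, the field automorphism acts trivially on $\mathrm{Diag}$, so $\mathrm{Field}$ is central and $\Out(T) \cong D_8 \times C_f$; as $D_8$ is metacyclic, a normal series with three cyclic factors exists after all, and your general observation applies. (Incidentally, the parenthetical on $E_6$ is a slip: its graph automorphism has order $2$; it is $D_4$, already set aside, whose graph group is $S_3$.)
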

\begin{proof} The structures of the outer automorphism groups of the finite simple groups are well known. When $T$ is an alternating group, $|\Out{(T)}|\le 4$. When $T$ is a simple classical group (see \cite[Chapter 2]{KleidLie}), $\Out{(T)}$ modulo its (cyclic) group $C$ of field automorphisms is either metacyclic ($T\neq P\Omega^{+}_{8}(q)$, for $q$ odd), or isomorphic to $S_{4}$ ($T=P\Omega^{+}_{8}(q)$). Finally, if $T$ is an exceptional group then \cite[Table 5.1.B]{KleidLie} implies that $|\Out{(T)}/C|\le 6$, while if $T$ is a sporadic group then $|\Out{(T)}|\le 2$, using \cite[Table 5.1.C]{KleidLie}. The result now follows in each case.\end{proof}

The preparations are finally complete. We will prove both parts of Theorem \ref{CompRedTheorem} together by induction on $n$. If $n=1$ then $G$ is cyclic, and Parts 1 and 2 clearly hold. So assume, below, that $n>1$, and that Theorem \ref{CompRedTheorem} holds for dimensions less than $n$. We may also assume that for fixed $n$, the theorem holds for fields of order less than $|F|$.
\begin{proof}[Proof of Part 2 of Theorem \ref{CompRedTheorem}] Since $R$ is homogeneous, it acts faithfully on each of its irreducible constituents, so we may assume that $R$ is irreducible. Let $f:=f(R)$ and $K_{1}:=K_{1}(R)$ be as defined after Lemma \ref{DefLemma}, and let $K$ and $F_{1}$ be as in Lemma \ref{DefLemma}, so that $K_{1}\cong K\le R$. If $f>1$, then $K_{1}$ satisfies the inductive hypothesis, so $H\cap K_{1}$ modulo its scalar subgroup can be invariably generated by $2\log{(r/f)}$ elements. Also, $HK/K$ is abelian of order at most $f$. Hence, $\di(H)\le \log{f}+2\log{(n/f)}+1\le 2\log{n}$.

So we may assume that all characteristic abelian subgroups of $R$ are contained in $Z$. Let $q_{i}$, $m_{i}$, $t_{j}$, $T_{j}$, $u_{j}$ and $U_{j}$ be as in Corollary \ref{FStarCor}. Then $Z=Z(R)$ is the intersection of the groups $C_{R}(O_{q_{i}})$ and $C_{R}(U_{j})$ over all $i$, $j$. Thus, since $\log{x}+\log{y}=\log{xy}$, and since $\prod_{i} q_{i}^{m_{i}}\prod_{j} t_{j}^{u_{j}}$ divides $n$, it will suffice to prove that, for each $i$, $j$, each subnormal subgroup of $R/C_{R}(O_{q_{i}}(R))$ can be invariably generated by $\log{q_{i}^{m_{i}}}$ elements, and that each subnormal subgroup of $R/C_{R}(U_{j})$ can be invariably generated by $\log{s_{j}^{t_{j}}}$ elements.

To this end, we first consider a subnormal subgroup $H$ of $R/C_{R}(O_{q}(R))$, where $q$ is a prime such that $O_{q}(R)$ is not contained in $Z$. By Lemma \ref{FStarO}, $R/C_{R}(O_{q}(R))$ has shape $q^{2m}.X$, where $X$ is a completely reducible subgroup of $Sp_{2m}(q)$, for some $m\ge 1$. We need to prove that $\di(H)\le 2m\log{q}$. If $2m<n$, then since Theorem \ref{CompRedTheorem} (including Part 1) holds for dimensions less than $n$, we can apply Lemma \ref{Lemma4.1} and conclude that $\di(H)\le 2\delta m$, where $\delta:=1$ if $q=2$, $\delta:=3/2$ if $q=3$ and $\delta:=2$ otherwise. In particular, $\delta\le \log{q}$, which gives us what we need. So assume that $2m\ge n$. Since $2m\le q^{m}\le n$, we must have $q=2$ and $m\le 2$. If $m=1$, then $R/C_{R}(O_{q}(R))\le 2^{2}.S_{3}$, and hence every subnormal subgroup of $R/C_{R}(O_{q}(R))$ can be invariably generated by $2$ elements. If $m=2$, then $R/C_{R}(O_{q}(R))\le 2^{4}.Sp_{4}(2)$, and hence every subnormal subgroup of $R/C_{R}(O_{q}(R))$ can be invariably generated by $4$ elements, by Lemma \ref{Lemma4.1}

Next, let $H$ be a subnormal subgroup of $\overline{R}:=R/C_{R}(U)$, where $U$ is a central product of $u$ copies of a quasisimple group $T$, and $T$ has a faithful irreducible representation of degree $t$ over $F$. We need to show here that $\di(H)\le 2u\log{t}$. By Lemma \ref{FStarT}, $\overline{T}^{u}\le\overline{R}\le \overline{T}^{u}.(A\wr\Sym{(u)})$, where $A\le \Out{(\overline{T})}$. Furthermore, $|A|\le 2$ if $t=2$. Since $H\cap \overline{T}^{u}$ is a subnormal subgroup of $\overline{T}^{u}$, it follows that $H\cap\overline{T}^{u}\cong \overline{T}^{v}$ for some $v\le u$. If $u=1$, then $\overline{R}\le \Aut{(\overline{T})}$, so $\di(H)\le 5$ by Proposition \ref{dOuterSimple} and Corollary \ref{KLSSimpleCor}. Using Corollary \ref{KLSSimple} and Proposition \ref{dOuterSimple}, we get $\di(H)\le 2+6+1=9$ if $u=2$, and $\di(H)\le 4+9+2=15$ if $u=3$. If $u\ge 4$, then Proposition \ref{dOuterSimple}, Corollary \ref{KLSSimpleCor} and Theorem \ref{DetLucHalfn} imply that $\di(H)\le u+1+3u+u/2=9u/2+1<5u$. If $t>5$, then $5<2\log{t}$, and the result follows, so assume that $t\le 5$. Suppose first that $t=2$. Then $H/\Soc{(H)}\le 2\wr\Sym{(u)}$, and hence either $u$ is even, or $d_{I}(H/\Soc{(H)})\le u-1$, by Proposition \ref{2Prop}. If $u$ is even, then $\di(\Soc{(H)})\le u$ by Corollary \ref{KLSSimpleCor}. Otherwise, $\di{(\Soc{(H)})}\le u+1$. Thus, in either case, we have $\di(H)\le 2u$, as needed.  

Suppose next that $t=3$. Then $|A|\le 3$ by \cite{12,19}, so $\di(H)\le u+1+3u/2+\delta_{3,u}/2$ by Theorem \ref{DetLucHalfn}, and this is less than $(2\log{3})u$, except when $u=1$. But in this case $H\le \overline{T}.A$, so $\di(H)\le 3<2\log{3}$ by Corollary \ref{KLSSimpleCor}, since $A$ is cyclic, which gives us what we need. Assume now that $t=4$. Then $|A|=1,2$ or $4$ by \cite{12,19}, so $H/\Soc{(H)}\le \Sym{(4)}\wr\Sym{(u)}\le \Sym{(4u)}$, and hence $\di(H)\le 3u+1\le 4u$ using Theorem \ref{DetLucHalfn} and Corollary \ref{KLSSimpleCor}. Finally, suppose that $t=5$. Then by \cite{12,19}, $A$ is metacyclic, so every subgroup of $A$ can be invariably generated by $2$ elements. Hence, if $u=1$ then $\di(H)\le 4<2\log{5}$, and if $u>1$ then $\di(H)\le u+1+2u+u/2+1/2\le 9u/2<2u\log{5}$, again using Corollary \ref{KLSSimple} and Theorem \ref{DetLucHalfn}, as needed.\end{proof}

\begin{proof}[Proof of Part 1 of Theorem \ref{CompRedTheorem}] Define $\epsilon=\epsilon(F)$ to be $1/2$, $1$ or $3/2$ according to whether $|F|=2$, $|F|=3$ or $|F|>3$, respectively. We need to prove that $\di(G)\le \epsilon n$.

Suppose first that $G$ is reducible, let $U$ be an irreducible submodule of the natural module $V$, and let $W$ be a $G$-complement for $U$ in $V$. Then $G$ embeds as a subdirect product of $G^{U}\times G^{W}$, where $G^{U}$, $G^{W}$ denote the induced actions of $G$ on $U$ and $W$ respectively. Since the embedding is subdirect, $G/G\cap G^{W}$ is isomorphic to $G^{U}$, and $G\cap G^{W}$ is a normal subgroup of $G^{W}$ (here, we are viewing $G$ as a subgroup of $G^{U}\times G^{W}$. Thus, $G\cap G^{W}$ is completely reducible by Clifford's Theorem. Since $d_{I}(G)\le d_{I}(G/G\cap G^{W})+d_{I}(G\cap G^{W})$, the result now follows from the inductive hypothesis, except when $|F|=2$ and $G$ has irreducible constituents of dimension $2$, $3$ or $4$. So assume that $|F|=2$. 

Suppose first that $\dim{U}=2$, so that $G^{U}\le S_{3}$, and $\dim{W}=n-2$. If $\di(G^{W})\le \lfloor \frac{n-2}{2}\rfloor$, then $\di(G)\le \frac{n-2}{2}+1=\frac{n}{2}$, as needed, by Proposition \ref{Subd2Prop} (i). So assume that $\di(G^{W})>\frac{n-2}{2}$. Then Proposition \ref{2less3Prop}, together with the inductive hypothesis, implies that $G^{W}$ is isomorphic to either $S_{3}$, $B_{n-2}$, $7:3$, $L_{3}(2)$ or $Sp_{4}(2)\cong S_{6}$. If $G^{W}\cong S_{3}$, then the result follows from Proposition \ref{S3Prop}. If $G^{W}$ is isomorphic to $7:3$ or $L_{3}(2)$, then $\di(G)\le 2$, while if $G^{W}\cong Sp_{4}(2)$, then $\di(G)\le 3$, by Proposition \ref{Subd2Prop} (ii). So assume that $G^{W}\cong B_{n-2}$. If $G$ is the full direct product $G^{U}\times G^{W}$, then the result follows from Proposition \ref{S3Prop}. So assume that $G\cap G^{U}<S_{3}$. If $|G\cap G^{U}|=1$ then $\di(G)=\di(K)=\frac{n}{2}$. Otherwise, $|G\cap G^{U}|=3$, so $G\cong B_{n}$ and $\di(G)=\left\lfloor \frac{n}{2}\right\rfloor+1$.

Assume now that $\dim{U}=3$. Then $G^{U}\cong 7$, $7:3$ or $GL_{3}(2)$. If $\di(G^{W})\le \left\lfloor \frac{n-3}{2}\right\rfloor$, then the result follows from Proposition \ref{Subd2Prop} (i). So assume that $G^{W}$ is isomorphic to one of $S_{3}$, $7:3$, $L_{3}(2)$, $Sp_{4}(2)$, or $B_{n-3}$. The result then follows, from Lemma \ref{Subd1Lemma} if $G^{W}\cong S_{3}$, $Sp_{4}(2)$; Proposition \ref{Subd2Prop} (i) if $G^{W}\cong 7:3,GL_{3}(2)$; and from part Proposition \ref{Subd2Prop} (iv) otherwise.

Finally, suppose that $\dim{U}=4$. If $\di(G^{W})\le \lfloor \frac{n-4}{2}\rfloor$, or if $G^{W}\cong B_{n-4}$, then the result follows, from Proposition \ref{Subd2Prop} parts (i) and (v). So assume that $G^{W}\cong S_{3}$, $7:3$, $L_{3}(2)$ or $Sp_{4}(2)$. If $G^{U}\not \cong Sp_{4}(2)$, then $\di(G)\le \di(G^{U})+1=3$ by Propositions \ref{2less3Prop} and \ref{Subd2Prop} (i). If $G^{U}\cong Sp_{4}(2)$, then the required upper bound follows from Proposition \ref{Subd1Lemma} if $G^{W}\not\cong S_{3}$, $Sp_{4}(2)$, and Proposition \ref{Subd2Prop} parts (ii) and (iii) otherwise.

So we may assume that $G$ is irreducible. If $G$ is imprimitive, let $\Delta$ be a minimal block for $G$, of dimension $r$ say, let $R:=G_{\Delta}^{\Delta}$ be the induced action of the stabiliser $G_{\Delta}$ on $\Delta$, and let $S$ be the induced action of $G$ on the set of $G$-conjugates of $\Delta$. If $G$ primitive, set $\Delta:=V$, $r:=n$, $R:=G$ and $S:=1$. Since $G$ is irreducible, $R\le GL_{r}(F)$ is irreducible, and $S\le \Sym{(s)}$ is transitive, where $s:=n/r$. The minimality of $\Delta$ implies also that $R$ is primitive. In particular, each subnormal subgroup of $R$ can be invariably generated by $h(r)$ elements, where $h(r):=2\log{r}+1$ if $|F|>2$ or if $(r,|F|):=(4,2)$, and $h(r):=2\log{r}$ otherwise, by part 1 of Theorem \ref{CompRedTheorem}. Hence, $\di(G)\le h(r)s+(s+\delta_{s,3}/)2$, and this yields $\di(G)\le \epsilon rs$, except when $|F|=2$ and $r\le 17$; when $|F|=3$ and $r\le 7$; and when $|F|>3$ and $r\le 3$.

We deal with these exceptional cases as follows. Throughout, we write $f=f(R)$ and $K_{1}=K_{1}(R)$, and we take $F_{1}=F_{1}$ to be a degree $f$ extension of $F$, as in Lemma \ref{DefLemma}. We also identify $K_{1}$ as a characteristic subgroup of $R$ with $R/K_{1}$ abelian of order at most $f$ (see Lemma \ref{DefLemma}). If $f=r$, then $\di(G)\le 2s+\left\lfloor s/2\right\rfloor$, which is less than $rs/2$ when $r\ge 3$. Thus, in our case by case analysis below, we may assume that if $r\ge 3$, then $f<r$. \begin{enumerate}
\item $r=1$. If $G$ is primitive then $G$ is cyclic (or trivial when $|F|=2$), and the result is clear, so assume that $G$ is imprimitive. If $|F|=2$ then $R$ is trivial, and hence $G$ is reducible, a contradiction. So $|F|>2$. In any case, $R$ is cyclic, so Proposition \ref{Cmwr3} if $s=3$, or Theorem \ref{DetLucHalfn} otherwise, implies that $d_{I}(G)\le s+s/2=3s/2$. If $|F|=3$, then Corollary \ref{chief} yields $\di(G)\le E(s,2)+\di(S)$. Now, $E(s,2)\le s/2$ for all $s$, so by Theorem \ref{DetLucHalfn} we may assume that $s=3$. But in this case $\di(G)\le E(3,2)+2=3$, as needed.
\item $r=2$. Suppose first that $|F|=2$, so that $R\le S_{3}$. If $G$ is primitive, then $\di(G)\le 2$, which gives us what we need. So we may assume that $G$ is imprimitive. Then $\di(G)\le E(s,2)+E(s,3)+\di(S)$, by Corollary \ref{chief}. If $s=3$, and either $R<S_{3}$ or $S<S_{3}$, then we get $\di(G)\le E(3,2)+2$ or $\di(G)\le E(3,2)+E(3,3)+1$, which in each case yields $\di(G)\le 3$, as needed. If $s=3$ and $R=S=S_{3}$, then $G$ is a transitive subgroup of $\Sym{(9)}$, and the result follows from Proposition \ref{TransProp}. So assume that $s\neq 3$. Then $\di(S)\le s/2$, and the result follows from the bound $\di(G)\le E(s,2)+E(s,3)+\di(S)$ if $E(s,2)+E(s,3)\le s/2$. It is easily seen, from the definition of $E$, that $E(s,2)+E(s,3)>s/2$ only when $3\neq s$ is $2$, $4$, $6$ or $12$. However, when $s=2$, $4$ or $6$, then $G$ is transitive of degree $6$, $12$ or $18$, respectively (and $G<S_{6}$ when $s=2$), so the result follows from Proposition \ref{TransProp}. When $s=12$, $\di(S)\le 4$ by the same proposition, and hence $\di(G)\le E(12,2)+E(12,3)+4=11$, which gives us what we need.

Next, assume that $|F|=3$. Then $R\le GL_{2}(3)$. If $G$ is primitive, then $G$ is either cyclic or quaternion of order $8$, or isomorphic to $SD_{16}$, $SL_{2}(3)$ or $GL_{2}(3)$. All of these groups are easily seen to be invariably $2$-generated, so we may assume that $G$ is imprimitive. Then $\di(G)\le 4E(s,2)+E(s,3)+\di(S)$, by Corollary \ref{chief}. Since $E(3,2)=E(3,3)=1$, the case $s=3$ follows if either $R<GL_{2}(3)$ or $S<S_{3}$. Thus we may assume that if $s=3$, then $R=GL_{2}(3)$ and $S=S_{3}$. Then $G/G\cap Z^{s}\le S_{4}\wr S_{3}$ is transitive of degree $12$, and hence can be invariably generated by $4$ elements, by Proposition \ref{TransProp}. Thus, $\di(G)\le E(3,2)+4=5$, which gives us what we need. So assume that $s\neq 3$. If $s$ is not $2$, $4$, $6$, $8$ or $16$, then $4E(s,2)+E(s,3)\le 3s/2$, so $\di(G)\le 2s$ by Theorem \ref{DetLucHalfn}. If $s=16$, then $\di(S)\le 6$ by Proposition \ref{TransProp}, so $\di(G)\le 4E(16,2)+E(16,3)+6=31$, which gives us what we need. 

Suppose now that $s=6$ or $8$, and let $A_{K}$ be the induced action of $G\cap R^{s}$ on a minimal block. If $A_{K}<GL_{2}(3)$, then $\di(G)\le \max\left\{4E(s,2),3E(s,2)+E(s,3)\right\}+s/2$, which gives the result in each case. So assume that $R=GL_{2}(3)$. Then $G/G\cap SL_{2}(3)^{s}\le 2\wr S$ is transitive of degree $2s$, so by Proposition \ref{TransProp}, $\di(G/G\cap SL_{2}(3)^{s})$ is less than or equal to $4$ if $s=6$ and $6$ if $s=8$. Thus, $\di(G)\le 3E_{sol}(s,2)+E_{sol}(s,3)+s-2$, which gives the result in each case. 

Finally, suppose that $s=2$ or $4$. Then $R/Z$ is a subgroup of $S_{4}$, and if it is intransitive, then it must have order $1$, $2$ or $6$, in which case $\di(G)\le 2E(s,2)+E(s,3)+s/2$ by Corollary \ref{chief} and Theorem \ref{DetLucHalfn}. This yields the result in each case, so assume that $R/Z\le S_{4}$ is transitive. Then $G/G\cap Z^{s}\le \Sym{4}\wr \Sym{(s)}$ is transitive of degree $4s$. Thus, using Proposition \ref{TransProp}, if $s=4$ then $\di(G/G\cap Z^{s})\le 6$, so $\di(G)\le E(4,2)+6=8$, as needed. If $s=2$ and $\di(G/G\cap Z^{s})=4$, then $G/G\cap Z^{s}$, and hence $G$, is a $2$-group, by Proposition \ref{TransProp} so $\di(G)=d(G)\le 2s$ by \cite[Proposition 2.4]{KLS}, and \cite[Theorem 1.2]{derek}. Otherwise, $\di(G/G\cap Z^{s})\le 3$, so $\di(G)\le E(2,2)+3=4$, as needed.

So we may assume that $|F|>3$. Then either $f=2$ and $R$ is metacyclic, or $f=1$ and either $R/Z\le 2^{2}.S_{3}\cong S_{4}$, or $R/Z\le T.2$, for some nonabelian simple group $T$, by Lemmas \ref{FStarO} and \ref{FStarT}. If $G$ is primitive, then $\di(G)\le 3$, since $S_{4}$ and $T$ are both invariably $2$-generated. 

So assume that $G$ is imprimitive. In the case $f=2$ we have $d_{I}(G)\le 2s+s/2+1\le 3s$, using Theorem \ref{DetLucHalfn}. Otherwise, Corollary \ref{chief} yields $\di(G)\le 3E(s,2)+E(s,3)+s+s/2$. It is easy to see that $3E(s,2)+9E(s,3)\le 3s/2$ (from which the result follows), except when $s=2$ or $4$. Furthermore, if $s=2$ or $4$ and $R/Z\le T.2$ for a nonabelian simple group $T$, then $\di(G)\le E(s,2)+1+s/2$, by Corollary \ref{chief} which is less than $3s$ in each case. Thus, writing bars for reduction modulo $Z^{s}$, we may assume that $\overline{R}=R/Z$ has shape $N.X$, where $N$ is elementary abelian of order $4$, and $X=1$, $A_{3}$ or $S_{3}$. Then $\overline{G}/\overline{G}\cap N^{s}$ is either transitive of degree $s$ or $3s$. It follows that $\di(\overline{G}/\overline{G}\cap N^{s})\le s$ in each case, by Proposition \ref{TransProp} (since $s=2$ or $4$). It follows that $\di(G)\le 2E(s,2)+s+s=3s$, as needed, in each of the cases $s=2$ and $s=4$.   

\item $r=3$. We first consider the case $|F|=2$. Then $R\le GL_{3}(2)$ is primitive and irreducible, so either $R=GL_{3}(2)$ is simple or $|R|$ divides $21$. In particular, $\di(G)\le 2$ if $G$ is primitive. So assume that $G$ is imprimitive. Then $\di(G)\le E(s,3)+E(s,7)+s/2+1$, by Corollary \ref{chief} and Theorem \ref{DetLucHalfn}, and the result follows since $E(s,p)\le s/2$ for all primes $p$. 

Assume now that $|F|=3$. Here $R\le GL_{3}(3)$ is primitive and irreducible, so either $R$ has a nonabelian simple normal subgroup $T$ of index at most $2$, or $R$ is soluble of order dividing $78$ (by direct computation). It follows from Lemma \ref{diMin} part (iii) that $\di(G)\le 3$ if $G$ is primitive. Otherwise, Corollary \ref{chief} and Theorem \ref{DetLucHalfn} imply that $\di(G)\le E(s,2)+E(s,3)+E(s,13)+s/2+1\le 2s+1<3s$, as needed (since $E(s,p)\le s/2$ for all primes $p$).

So assume that $|F|>3$. Since $f=1$, either $R/Z\le 3^{2}.Sp_{2}(3)$ or $R/Z\le T.A$, where $T$ is a nonabelian simple group with a projective irreducible representation of degree $3$, and $A\le \Out{(T)}$. By \cite{12,19} and \cite{Atlas}, $|A|\le 3$. Thus, $G$ primitive implies that $\di(G)\le 3$ by Lemmas \ref{diMin} and \ref{Lemma4.1}, and Corollary \ref{KLSSimpleCor}, in either case. So assume that $G$ is imprimitive. In either case we have $\di(G)\le 3E(s,2)+3E(s,3)+s+\di(S)$, by Corollary \ref{chief}. Since $E(s,p)\le s/2$ for all primes $p$, and $E(3,2)=E(3,3)=1$, and $\di(S)\le (s+\delta_{3,s})/2$, the result follows for all $s$.
\end{enumerate}
For the remaining cases below, we may assume that $|F|=2$ or $|F|=3$.\begin{enumerate}\setcounter{enumi}{3}
\item $r=4$. Suppose first that $|F|=2$. Then, since $(r,2^{f}-1)=1$ and $f<r$, Corollary \ref{FStarCor} implies that $R$ is insoluble. Direct computation then implies that $R\le GL_{4}(2)$ is isomorphic to either $A_{5}$, $S_{5}$, $3.S_{5}$, $A_{6}$, $S_{6}$, $A_{7}$, $S_{7}$ or  $L_{4}(2)$. Suppose first that $G$ is primitive. If $G=S_{6}$, then $\di(G)=3$, as needed, so assume otherwise. Then Corollary \ref{KLSSimpleCor} and Proposition \ref{SymmetricGroup} yield $\di(G)\le 2$ in each case, except when $G\cong 3.S_{5}$. But in this case, a $5$-cycle and a $2$-cycle invariably generate $G/M$, where $M\unlhd G$ of order $3$. Clearly any element $x$ of a Sylow $5$-subgroup of $G$ reduces to a $5$-cycle modulo $M$, and centralises $M$. Thus, if $M=\langle z\rangle$, with $|z|=3$, and $y\in G$ with $My$ a $2$-cycle, then $\left\{xz,y\right\}$ invariably generates $G$. 

So assume that $G$ is imprimitive. Corollary \ref{chief} then yields $\di(G)\le E(s,2)+E(s,3)+2+\di(S)$, and the result now follows whenever $s\ge 4$, since $E(s,p)\le s/2$ for all primes $p$, and $\di(S)\le (s+\delta_{s,3})/2$ by Theorem \ref{DetLucHalfn}. If $s=3$, then $\di(G)\le E(3,2)+E(3,3)+2+2=6$, as needed. If $s=2$, then the result follows from using Table 1 and Corollary \ref{chief}, except when $R\cong 3.S_{5}$. But in this case, if $M$ is a minimal normal subgroup of $R$ of order $3$, then since $G$ is large, $G/G\cap M^{2}$ is isomorphic to a transitive subgroup of $S_{5}\wr S_{2}\le S_{10}$. Hence $\di(G/G\cap M^{2})\le 3$ by Proposition \ref{TransProp}. Corollary \ref{chief} then gives $\di(G)\le E(2,3)+3=4$ as needed.

Assume now that $|F|=3$. If $G$ is primitive, then the result follows from Proposition \ref{PrimIrr43Prop}, so assume that $G$ is imprimitive. Then Corollary \ref{chief}, together with Table 1, yields $\di(G)\le \max\left\{8E(s,2)+2E(s,3),7E(s,2)+E(s,5)\right\}+\di(S)$. By using Theorem \ref{DetLucHalfn} (or Proposition \ref{TransProp} when $s=6$), and the definition of the function $E$, it is easy to see that this yields $\di(G)\le 4s$ in all cases, except when $s=2$ or $s=4$.

So assume that $s$ is $2$ or $4$. If $R$ is insoluble, then the result follows easily in each case, using Corollary \ref{chief} and Table 1, except when $R$ has $6$ composition factors of order $2$, and one composition factors isomorphic to $A_{5}$, and $s=2$. But in this case, direct computation quickly shows $R/O_{2}(R)\cong S_{5}$. Thus, $G/G\cap O_{2}(R)^{2}$ is isomorphic to an transitive group of degree $10$, and hence $\di(G/G\cap O_{2}(R))\le 3$, by Proposition \ref{TransProp}. It follows from Corollary \ref{chief} that $\di(G)\le 5E(2,2)+3=8$, as needed.

So assume that $R$ is soluble. Since $f<r$, Corollary \ref{FStarCor} implies that there are two possibilities:\begin{enumerate}[(a)]
\item $f=2$ and $K_{1}/Z\le 2^{2}.S_{3}\cong S_{4}$. Then $\di(G)\le 4E(s,2)+E(s,3)+s+s/2$, by Corollary \ref{chief} and Theorem \ref{DetLucHalfn}. This gives us what we need in each case.
\item $f=1$ and $R/Z$ has shape $N.X$, where $N$ is elementary abelian of order $2^{4}$ and $X\le Sp_{4}(2)\cong S_{6}$ is completely reducible. Recall also that $Y:=A_{K}N/N\le Sp_{4}(2)$ is also completely reducible, since $A_{K}$ a normal subgroup of $R$. By direct computation, the possibilities for $|X|$ and $|Y|$ are $1,3,5,6,9,10,18,20,36$ and $72$. We also compute, for each possible $Y$, the number $j$ of orbits of $Y$ on the nonidentity elements of $N$. Then, by Corollary \ref{chiefpreCor} and Theorem \ref{pmodlemma} Part (iv) we have 
\begin{align}\di(G)\le E(s,2)+(i+\min\left\{j,4\right\})\lfloor bs/\sqrt{\log{s}}\rfloor+kE(s,3)+lE(s,5)+\di(S) \end{align} where $i$, $k$ and $l$ denote the number of composition factors of $Y$ of order $2$, $3$ and $5$, respectively. Apart from two cases, this gives us what we need whenever $s=2$ or $s=4$. 

The two exceptions occur when $|X|=36$ or $72$, and $(|Y|,j)=(36,3)$ or $(72,2)$. In these cases, the result follows from (7.1) when $s=4$, so assume that $s=2$. Then (by direct computation) $X$ has a core-free subgroup of index $6$. Hence, $G/G\cap N^{2}$ is transitive of degree $12$, and hence $\di(\overline{G}/\overline{G}\cap N^{2})\le 4$ by Proposition \ref{TransProp}, where bars denote reduction modulo $Z^{2}$. Also, since $j=3$, Corollary \ref{chiefpreCor} and Theorem \ref{pmodlemma} Part (iv) imply that $d_{\overline{G}}(N)\le 3\lfloor 2b\rfloor=3$. Hence, by Corollary \ref{chief}, we have $\di(G)\le d_{G}(G\cap Z^{2})+$\\ $d_{G}(\overline{G}\cap N^{2})+\di(\overline{G}/\overline{G}\cap N^{2})\le E(s,2)+3\lfloor 2b\rfloor+4=8$, which gives us what we need. \end{enumerate}

\item $r=5$, $7$, $11$, $13$ or $17$. Here, since $f=1$, all characteristic abelian subgroups of $R$ are contained in $Z(R)$, which has order $1$ or $2$, depending on whether $|F|$ has order $2$ or $3$ respectively. Hence, Corollary \ref{FStarCor} implies that $R/Z\le T.A$, where $T$ is a nonabelian simple group with a projective irreducible representation $M$ of degree $r$ over $F$, and $A$ is the subgroup of $\Out{(T)}$ which stabilises $M$. Suppose first that $r=5$ or $r=7$, and $|F|=2$. Then direct computation implies that $R=R/Z\cong L_{r}(2)$. Hence, $\di(G)\le 2$ if $G$ is primitive, and $\di(G)\le 2+s/2+1/2$ by Corollary \ref{chief} and Theorem \ref{DetLucHalfn} if $G$ is imprimitive. This gives the required upper bound in each case.

So we may assume that $(r,|F|)\neq (5,2)$, $(7,2)$. If $G=R$ is primitive, then $G/Z$ is almost simple, so $\di(G)\le 6$, and $\di(G)\le 5$ if $|F|=2$ by Corollary \ref{KLSSimpleCor} and Proposition \ref{dOuterSimple}. So assume that $G$ is imprimitive; we will prove that $\di(G)\le 5s$, which will give us what we need. To see this, Corollary \ref{chief}, Theorem \ref{DetLucHalfn} and Proposition \ref{dOuterSimple} imply that $\di(G)\le 2+3s+E(s,2)+(s+\delta_{s,3})/2$. This yields $\di(G)\le 5s$, since $E(s,2)\le s/2$.

\item $r=6$. Suppose first that $|F|=2$, and that $G$ is primitive. If $G$ is soluble, then since $f<r$, the only possibility is that $f=2$, and that $R\le 3^{1+2}.GL_{2}(3)$. In this case, it is easily checked by direct computation that $\di(G)\le 3$. So assume that $G$ is insoluble. Let $a$ be the number of abelian chief factors of $G\le GL_{6}(2)$, and let $b$ be the number of nonabelian chief factors. By Lemma \ref{diMin}, $\di(G)\le a+2b$, and by direct computation (using the database of irreducible matrix groups in MAGMA), this gives $\di(G)\le 3$, except when $G$'s number $i$ in the MAGMA database is $44,47,52,60,61$ or $62$. Suppose first that $i=47$. Then $G$ has a normal subgroup $N\cong C_{3}$ such that $G/N\cong S_{6}$. Let $P$ be a Sylow $5$-subgroup of $G$, and let $x\in P$ such that $Nx$ is a $5$-cycle in $S_{6}$. Also, let $y$ and $z$ be elements of $G$ which reduce modulo $N$ to a $3$-cycle and a $6$-cycle, respectively, and let $w$ be a generator for $N$. Then, since $x$ centralises $w$, and $Nx,Ny,Nz$ invariably generates $G/N$, we see that $wx,y,z$ invariably generates $G$, as needed. So assume that $i\neq 47$. In each of these cases, $G$ has a subnormal series $1\unlhd N\unlhd G$, in which one of the factors is cyclic, and the other is isomorphic to either $PGL_{2}(7)$, $P\Gamma L_{2}(8)$ or $L_{3}(4).m$ ($m=2$, $3$ or $6$). Thus, $\di(G)\le 3$ by Proposition \ref{SymmetricGroup}.

So assume that $G$ is imprimitive. Using Table 1 and Corollary \ref{chief}, if $R$ is insoluble then we have $\di(G)\le \max\left\{E(s,2)+2E(s,3),E(s,3)+E(s,7)\right\}+2+\di(S)$. Since $E(s,p)$ and $\di(S)$ are bounded above by $s/2$, the result follows. So assume that $R$ is soluble. Then, since $f<6$, Lemma \ref{FStarLemma} implies that $f=2$ and $K_{1}/Z$ has shape $N.X$, where $N$ is elementary abelian of order $3^{2}$, and $X\le Sp_{2}(3)$ is soluble and completely reducible. It follows from Corollary \ref{6CaseCor} that $\di(G)\le 6$ if $s=2$, as needed, and that
\begin{align*}\di(G)\le & E(s,2)+\min\left\{2E(s,3),\lfloor bs/\sqrt{\log{s_{3}}}\rfloor,s/s_{2}\right\}+E(s,2)\\
& +\min\left\{2E(s,2),\lfloor bs/\sqrt{\log{s_{2}}}\rfloor,s/s_{3}\right\}+\delta_{s,8}E(s,2)+(1-\delta_{s,8})E(s,3)+E(s,3)+\di(S)\end{align*} in general. Using the definition of the function $E$, one can easily see that this latter bound, together with Theorem \ref{DetLucHalfn}, yields the result whenever $s\ge 3$.

Next, assume that $|F|=3$. If $G$ is primitive, then $\di(G)\le \lfloor 2\log{6}\rfloor+1=6$ by Theorem \ref{CompRedTheorem}, as needed. Suppose, then, that $G$ is imprimitive. Using Table 1 and Corollary \ref{chief}, we have 
$$\di(G)\le \left\{4E(s,2)+2E(s,3)+E(s,13),4E(s,2)+E(s,3)+E(s,7)+E(s,3)\right\}+\di(S)$$ Using the definition of the function $E$, this bound, together with Theorem \ref{DetLucHalfn} yields the result for all $s$.
\end{enumerate}

For the remaining cases below we assume that $|F|=2$.\begin{enumerate}\setcounter{enumi}{6}
\item $r=8$. Since $f<r$, Lemma \ref{FStarLemma} implies that $R\le GL_{8}(2)$ is insoluble, and using Table 1, together with Corollary \ref{chief}, we have $$\di(G)\le \max\left\{2E(s,2)+E(s,3)+4,E(s,2)+2E(s,3)+4,2E(s,2)+2E(s,3)+E(s,5)+2\right\}+\di(S)$$ when $G$ is imprimitive. Theorem \ref{DetLucHalfn}, together with the bound $E(s,p)\le s/2$ now gives us what we need. 

So we may assume that $G$ is primitive. If $f\ge 4$, then $\di(G)\le 1+2\log{8/f}+1=4$ as needed, so assume that $f=1$ or $f=2$. Suppose first that $f=1$. Then $R=R/Z$ is almost simple, with a projective irreducible representation of degree $4$ or $8$. Hence, $R/\Soc{(R)}$ is cyclic, using the list in \cite[proof of Lemma 4.2]{derek}, and the result follows from Proposition \ref{dOuterSimple} and Corollary \ref{KLSSimpleCor}. 

So we may assume that $f=2$. Then $G\le GL_{4}(4).4$, and $K_{1}=G\cap GL_{4}(4)$ is irreducible and weakly quasiprimitive; let $L$ be the generalised fitting subgroup of $K_{1}$. Using the list in \cite[proof of Lemma 4.2]{derek} and Lemma \ref{FStarLemma}, $L$ has $l$ quasisimple central factors, where $l=1$ or $2$. If $l=1$, then $K_{1}/Z\cong L_{2^{a}}(4)$, $PSp_{2^{a}}(4)$ ($a=1$ or $2$), or $U_{4}(2)$. In particular, $\di(G)\le 1+\di(K_{1}/Z)+1\le 4$ by Corollary \ref{KLSSimpleCor}. So all that remains is the case when $L/Z$ is a direct product of simple groups $T_{1}$, $T_{2}$, where each $T_{i}$ has a projective irreducible representation of degree $2$ over $\mathbb{F}_{4}$. By again using the list in \cite[proof of Lemma 4.2]{derek}, we see that $T_{i}\cong L_{2}(4)\cong A_{5}$ for each $i$. By using the database of irreducible matrix groups in MAGMA, we see that the only possibility for $G\le GL_{8}(2)$ is to have number $j=165,172,185,197$ or $203$ in the database. However, in each of these cases, we find that either $G/L$ is cyclic, or $\di(G/L)=2$ and $Z=1$. Hence, $\di(G)\le 2+\di(L/Z)=4$, in each case by Corollary \ref{KLSSimpleCor}.

\item $r=9$. Again, since we are assuming that $f<r$, Lemma \ref{FStarLemma} implies that $R$ is insoluble (since any $q_{i}$ as in Lemma \ref{FStarLemma} divides $r/f$ and $2^{r/f}-1$). The database of irreducible matrix groups implies that the list of chief factors of $R$ form a sublist of either $[3,7,T]$ or $[2,3,T]$, for a nonabelian simple group $T$, or $[2,L_{2}(7)\times L_{2}(7)]$. Thus, if $G$ is primitive, then $\di(G)\le 4$ by part (iii) of Lemma \ref{diMin}. If $G$ is imprimitive, then Corollary \ref{chief} gives $\di(G)\le E(s,2)+E(s,3)+E(s,7)+2+\di(S)$, and the result again follows easily, using Theorem \ref{DetLucHalfn}, and the fact that $E(s,p)\le s/2$.

\item $r=10$, $14$ or $15$. As in the cases $r=8$ and $r=9$ above, $R$ must be insoluble in each case, by Lemma \ref{FStarLemma}. Now, $r$ divides $uv$, where $u$ and $v$ are prime, so either $K_{1}/Z$ is almost simple, or $Z=C_{1}\cap C_{2}$ where $C_{1}$, $C_{2}$ are two normal subgroups of $K_{1}$, and $K_{1}/C_{i}$ is almost simple, for $i=1$, $2$, whose socles have projective irreducible representations over $F_{1}$ of degree $u$ and $v$ respectively over. Also, $R/K_{1}$ is cyclic of order $1$, $u$, or $v$ (since $f<r$). Note also that, using Corollary \ref{FStarTCors2} and \cite{12,19}, if $T$ is a nonabelian simple group with a projective irreducible representation of degree $d$, and $A$ is as in Lemma \ref{FStarLemma}, then $d\neq 2,3$ if $f=1$; $|A|\le 2$ if $d=2$; $|A|\le 3$ if $d=3$; and $|A|=1$ if $d=5$, using the list in \cite[proof of Lemma 4.2]{derek} ($\ast$). 

Suppose first that $r=10$. By ($\ast$), $K_{1}/Z$ is an almost simple group, whose socle has a projective irreducible representation of degree $2$ or $5$ over $F_{1}$. Hence, if $G$ is primitive then $\di(K_{1}/Z)\le 3$ by ($\ast$). Thus, $\di(G)\le \di(G/K_{1})+\di(K_{1}/Z)+\di(Z)\le 5$, as needed. Otherwise, Corollary \ref{chief} and ($\ast$) imply that $\di(G)\le \max\left\{E(s,2),E(s,5)\right\}+E(s,2)+s+2+s+\di(S)$. This is less than $5s$, using Theorem \ref{DetLucHalfn}, and the bound $E(s,p)\le s/2$.

Assume now that $r=14$ or $r=15$. It follows, from the first paragraph above and Corollary \ref{chief}, that $K_{1}/Z$ is almost simple. Hence, $\di(G)\le 1+\di(K_{1}/Z)+1\le 7$ if $G$ is primitive. So we my assume that $G$ is imprimitive. Then $\di(G)\le \max\left\{E(s,u),E(s,v)\right\}+E(s,2)+2s+2+s+\di(S)$, by Corollary \ref{chief}. By using the bound $E(s,p)\le s/2$, and the bound of Theorem \ref{DetLucHalfn}, the result now follows.

\item $r=12$. Suppose first that $K_{1}$ has a $q$-core not contained in $Z$, for some prime $q$. Then $q$ divides $r/f$ and $2^{f}-1$, so $f=2$ or $f=4$. Thus, $r/f$ divides $6$, so $q=3$ is the only possibility. Hence, using Lemma \ref{FStarLemma}, we have $K_{1}/C_{K_{1}}(O_{3}(K_{1}))\le 3^{2}.Sp_{2}(3)$. The generalised fitting subgroup $L$ of $K_{1}$ has at most one another central factor, and if it has such a central factor $U$, then $f=2$ and $U$ is insoluble, with $K_{1}/C_{K_{1}}(U)\le T.2$, for a nonabelian simple group $T$, by Lemmas \ref{FStarO} and \ref{FStarT}. In this case, $T$ has a projective irreducible representation of degree $2$ over $\mathbb{F}_{4}$, so we must have $T=A_{5}$. Hence, $\di(K_{1}/C_{K_{1}}(U))=2$ by Proposition \ref{SymmetricGroup}, and the result now follows if $G$ is primitive, since $Z=C_{K_{1}}(O_{3}(K_{1}))\cap C_{K_{1}}(T)$, $\di(G/K_{1})\le 1$ and every subnormal subgroup of $2^{1+2}.Sp_{2}(3)$ is invariably $3$-generated by direct computation. If $G$ is imprimitive, then Corollary \ref{chief} implies that $$\di(G)\le 5E(s,2)+4E(s,3)+E(s,5)+\di(S)$$ The result now follows, since $E(s,p)\le s/2$, and $\di(S)\le (s+\delta_{s,3})/2$. 

So we may assume that all central factors of $L$ are quasisimple. If $f\ge 4$, then $\di(G)\le 5s+\di(S)$ by Part 2 of Theorem \ref{CompRedTheorem}, so we may assume that $f=1$, $2$ or $3$ (using Theorem \ref{DetLucHalfn}. Then $K_{1}$ has normal subgroups $C_{1}$, $\hdots$, $C_{t}$, with $t\le 3$, such that $K_{1}/C_{i}$ has shape specified in the following table. Furthermore, in each case, $K_{1}/C_{i}$ is either almost simple with a projective irreducible representation over $\mathbb{F}_{2^{f}}$ of degree $t_{i}$ dividing $r/f$, or $f=3$, $t=1$, and $K_{1}/C_{1}$ has shape $T_{1}^{2}.D_{8}$, where $T_{1}$ is a nonabelian simple group. To list all of the possibilities for the groups $K_{1}/C_{i}$, we use the list in \cite[proof of Lemma 4.2]{derek}. The possibilities are as follows: 
 \begin{center}\begin{tabular}{|c|c|c|c|p{10cm}|}
\hline
  \multicolumn{5}{ |c| }{Table 2}\\
  \hline
 $f$ & $K_{1}/C_{1}$ & $K_{1}/C_{2}$ & $K_{1}/C_{3}$ & \\
  \hline\hline
 $1$ & $A$ & $L_{3}(2).2$ & - & $A\cong A_{m}$, $S_{m}$, or $L_{4}(2)$, where $5\le m\le 7$. \\
 \hline
 $1$ & $T_{1}.A_{1}$ & - & - & $T_{1}$ is a nonabelian simple group, and $A_{1}\le \Out{(T_{1})}$.\\
 \hline
 $2$ & $A$ & $A'$ & - & $A\cong A_{5}$ or $S_{5}$; $A'\cong L_{3}(4)$, $PGL_{3}(4)$ or $A_{6}$. \\
 \hline
 $2$ & $A$ & - & - & $A\cong L_{6}(4).3=PGL_{6}(4)$, $PSp_{6}(4)$, $U_{6}(3).2$, $A_{7}$, $M_{22}$, $U_{4}(3).2$, $L_{4}(4).2$, $U_{4}(4).2$, $G_{2}(4)$, $J_{2}$, $L_{2}(13)$.\\
 \hline
 $3$ & $L_{2}(8)$ & $L_{2}(8)$ & - & $T_{i}$ is a nonabelian simple group. \\
 \hline
 $3$ & $T_{1}^{2}.D_{8}$ & - & - & $T_{i}$ is a nonabelian simple group. \\
 \hline
 $3$ & $T_{1}.A_{1}$ & - & - & $T_{1}$ is a nonabelian simple group, and $A_{1}\le \Out{(T_{1})}$ of order at most $4$.\\
 \hline
\end{tabular}\end{center}
Thus, if $G$ is primitive, then $\di(G/K_{1}), \di(K_{1}/Z)\le 1-\delta_{f,1}$, and going through each of the cases in Table 2 above, and applying Corollary \ref{KLSSimple} and Propositions \ref{dOuterSimple} and \ref{SymmetricGroup} where necessary, we get $\di(G)\le 6$. If $G$ is imprimitive using Corollary \ref{chief} we get $\di(G)\le 2E(s,2)+2s+4+\max\left\{E(s,2),E(s,3)\right\}+\di(S)$, which is less than or equal to $6s$, using the bound $E(s,p)\le s/2$ and Theorem \ref{DetLucHalfn}.

\item $r=16$. If $G$ is primitive, then $\di(G)\le 2\log{16}=8$ by Part 2 of Theorem \ref{CompRedTheorem}, so assume that $G$ is imprimitive. If $f>1$, then Part 2 of Theorem \ref{CompRedTheorem} implies that $\di(G)\le 7s+\di(S)$ which is less than $8s$, by Theorem \ref{DetLucHalfn}. So assume that $f=1$. Then each central factor $U_{j}$ of the generalised Fitting subgroup $L$ of $R$ is insoluble; let $T_{j}$, $S_{j}$, $t_{j}$, and $s_{j}$ be as in Lemma \ref{FStarT}, for $1\le j\le l$. Then $l\le 4$, $\prod_{i=1}^{l}s_{j}^{t_{j}}$ divides $16$, and $R/C_{R}(T_{j})\le S_{j}^{t_{j}}.(A_{j}\wr \Sym{(t_{j})})$, where $A_{j}\le \Out{(S_{j})}$. Furthermore, $|A_{j}|\le 2$ if $s_{j}=2$. Assume first that $R=R/Z$ has shape $T^{4}.(2\wr X)$, where $X\le S_{4}$ is the induced action of $R$ on the four direct factors in $T^{4}$. If $X$ is intransitive, then $|X|$ has order $1$, $2$, $3$ or $6$. If $X$ is transitive, then $T^{4}$ is a minimal normal subgroup of $R$, so $T^{4}$ is a chief factor of $R$. Hence, Corollary \ref{chief} implies that $\di(G)\le \max\left\{7E(s,2)+E(s,3)+2,5E(s,2)+E(s,3)+8\right\}+\di(S)$, and this gives $\di(G)\le 8s$, using Theorem \ref{DetLucHalfn}, and the bound $E(s,p)\le s/2$.

Going through each of the remaining possibilities for the pairs $(s_{j},t_{j})$ (as in the case $r=12$ above), and applying Corollary \ref{chief}, we get $\di(G)\le \max\left\{4E(s,2)+2s+6,2E(s,2)+4s+4\right\}$\\$+\di(S)$, which gives us what we need, using the bounds from Theorem \ref{DetLucHalfn}, and the bound $E(s,p)\le s/2$. 
\end{enumerate}\end{proof}

\section{The proof of Theorems \ref{TransInvTheorem} and \ref{PrimInvTheorem}}\label{ProofSection2}
Throughout the remainder of the paper, we will make use of the Vinogradov notation defined in Section 1: recall, $A\ll B$ means that $A=O(B)$. We begin with a useful observation.
\begin{Lemma}\label{UsefulBoundGar} Suppose that $n=ab$, with $a$ and $b$ at least $2$. Then
$$\frac{b\log{a}}{\sqrt{\log{b}}}\ll \frac{ab}{\sqrt{\log{ab}}}$$
where the implied constant is independent of $a$, $b$, and $n$.\end{Lemma}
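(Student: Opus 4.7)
The plan is to divide the two sides and show that the ratio
$$\frac{b\log a/\sqrt{\log b}}{ab/\sqrt{\log(ab)}} \;=\; \frac{\log a}{a}\sqrt{\frac{\log(ab)}{\log b}} \;=\; \frac{\log a}{a}\sqrt{1+\frac{\log a}{\log b}}$$
is bounded above by an absolute constant for all $a,b\ge 2$, where the last equality uses $\log(ab)=\log a+\log b$. The stated $\ll$-inequality is then immediate with an absolute implied constant, independent of $a,b,n$.

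I would split into two cases according to whether $a\le b$ or $a>b$. When $a\le b$, one has $\log a/\log b\le 1$, so the square root factor is at most $\sqrt{2}$; combined with the fact that $\log a/a$ is uniformly bounded on $[2,\infty)$ (it is continuous, tends to $0$, and so is bounded by its maximum), the ratio is bounded in this range. When $a>b$, I would instead use the reverse bound $1+\log a/\log b\le 2\log a/\log b$ to absorb one factor of $\log a$ into the square root, obtaining
$$\frac{\log a}{a}\sqrt{1+\frac{\log a}{\log b}}\;\le\;\sqrt{2}\,\frac{(\log a)^{3/2}}{a\sqrt{\log b}}\;\le\;\sqrt{2}\,\frac{(\log a)^{3/2}}{a},$$
the last step using that $\log$ here means $\log_{2}$ and $b\ge 2$ forces $\log b\ge 1$. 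The function $(\log a)^{3/2}/a$ is again bounded on $[2,\infty)$ by the same elementary continuity argument, completing the case.

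There is no serious obstacle here — the argument is pure calculus. The only mildly clever point is that in the regime $a>b$ one should spend one of the two factors of $\log a$ on absorbing $\log(ab)/\log b$ into a bounded quantity, leaving a $(\log a)^{3/2}/a$ factor that decays in $a$. The other case is easier and only uses the trivial bound $\log a\le \log b$.
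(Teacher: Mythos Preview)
Your proof is correct. The approach is close in spirit to the paper's but packaged differently: you form the ratio and bound it case-by-case according to whether $a\le b$ or $a>b$, whereas the paper bounds the left-hand side directly by both $ba/\sqrt{\log a}$ and $ba/\sqrt{\log b}$, takes the minimum to obtain $n/\sqrt{\log\max\{a,b\}}$, and then uses $\max\{a,b\}\ge\sqrt{n}$ to finish. Both arguments ultimately hinge on the same two elementary facts you isolate --- that $\log b\ge 1$ for $b\ge 2$, and that $(\log a)^{3/2}/a$ (equivalently $\log a/a$) is bounded on $[2,\infty)$ --- and the case split $a\lessgtr b$ in your argument corresponds exactly to which of the paper's two upper bounds is the smaller. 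The paper's route is marginally slicker in that the single inequality $\max\{a,b\}\ge\sqrt{ab}$ handles both regimes at once, but there is no substantive difference.
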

\begin{proof} The monotonicity of the function $x/\sqrt{\log{x}}$ implies that 
$$\frac{b\log{a}}{\sqrt{\log{b}}}\le b\log{a}\le \frac{ba}{\sqrt{\log{a}}}.$$
Thus, we have 
$$\frac{b\log{a}}{\sqrt{\log{b}}}\le \frac{ba}{\sqrt{\log{\max\{a,b\}}}}=\frac{n}{\sqrt{\log{\max\{a,b\}}}}.$$
Since $ab=n$, either $a$ or $b$ must be greater than or equal to $\sqrt{n}$. The result follows.\end{proof} 
 
\begin{proof}[Proof of Theorem \ref{TransInvTheorem}] We will prove the theorem by induction on $n$. For the initial step, assume that $G$ is primitive. Then by Theorems \ref{Pyb} and \ref{MinTheoremInv}, we have $\di(G)\le a(G)\ll \log{n}\ll n/\sqrt{\log{n}}$, as required.

The inductive step concerns imprimitive $G$. Then $G$ is a large subgroup in a wreath product $R\wr S$, where $R$ is a primitive permutation group of degree $r\ge 2$, $S$ is a transitive permutation group of degree $s\ge 2$, and $rs=n$. In particular, $a(R)\ll\log{r}$ by Theorem \ref{Pyb}, and $\di(S)\ll s/\sqrt{\log{s}}$ by the inductive hypothesis. Hence, by Corollary \ref{37} we have
\begin{align}\label{GMLT1}\di(G) &\ll \frac{a(R)s}{\sqrt{\log{s}}}\ll \frac{s\log{r}}{\sqrt{\log{s}}}.\end{align}
The result now follows immediately from Lemma \ref{UsefulBoundGar}.\end{proof}

\begin{Proposition}\label{AffineCase} Let $G\le GL_m(p)$ be finite and irreducible. Then
\begin{align*}
\di(G) &\ll \frac{\log{p^{m}}}{\sqrt{\log{\log{p^m}}}}.\end{align*}\end{Proposition}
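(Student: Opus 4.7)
The plan is to split into the primitive linear case and the imprimitive linear case, combining two complementary bounds in the latter. Throughout write $n:=p^m$.

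For the primitive case I will apply Part 2 of Theorem \ref{CompRedTheorem} directly: since a primitive $G\le GL_m(p)$ is weakly quasiprimitive, taking $R=H=G$ and $Z=G\cap Z(GL_m(p))$, the theorem yields $\di(G/Z)\le 2\log m$; since $Z\le \mathbb{F}_p^\times$ is cyclic, Lemma \ref{diMin}(i) then gives $\di(G)\le 2\log m+1$. A short calculation confirms $\log m\ll \log n/\sqrt{\log\log n}$ for $n$ sufficiently large, with the finitely many small cases absorbed into the implied constant.

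In the imprimitive case I will fix a minimal block of dimension $r\ge 1$ and view $G$ as a large subgroup of the wreath product $R\wr S$, where $R\le GL_r(p)$ is primitive (hence weakly quasiprimitive) and $S\le \Sym(s)$ is transitive with $rs=m$. Set $K:=G\cap R^s$. I intend to combine the following two bounds. The first, Bound (I), is the chief-series estimate of Corollary \ref{37}: combined with Theorem \ref{Pyb2} (giving $a(R)\ll r\log p$) and Theorem \ref{TransInvTheorem} (giving $\di(S)\ll s/\sqrt{\log s}$), it yields
\[
\di(G)\ll \frac{a(R)s}{\sqrt{\log s}}+\di(S)\ll \frac{rs\log p}{\sqrt{\log s}}=\frac{\log n}{\sqrt{\log s}}.
\]
The second, Bound (II), exploits the subdirect-product structure of $K$: the transitivity of $S$, together with the largeness of $G$ and the normality of $K$ in $G$, forces each coordinate projection $\pi_i(K)$ to coincide with a common normal subgroup $N$ of $R$. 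I filter $K$ by $N_1:=K$ and $N_{i+1}:=N_i\cap\ker(\pi_i|_K)$, so that $N_{s+1}=1$ and each quotient $N_i/N_{i+1}$ embeds as a normal subgroup of $N$, hence is subnormal in $R$. Part 2 of Theorem \ref{CompRedTheorem} then gives $\di(N_i/N_{i+1})\le 2\log r+1$, and iterating Lemma \ref{diMin}(i) yields
\[
\di(G)\le \di(K)+\di(S)\ll s(\log r+1).
\]

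To conclude, I will choose the threshold $s_0:=(\log n)^{1/4}$. If $s\ge s_0$, Bound (I) delivers $\di(G)\ll \log n/\sqrt{\log s}\le 2\log n/\sqrt{\log\log n}$. If $s<s_0$, Bound (II) delivers $\di(G)\ll s(\log r+1)\le s\log(2m)\le 2(\log n)^{1/4}\log\log n$, which is $o(\log n/\sqrt{\log\log n})$ since $(\log\log n)^{3/2}=o((\log n)^{3/4})$. The principal obstacle, to my mind, is the small-$s$ regime: the chief-series estimate is powerless there, because $s\le m\le \log n$ forces $\sqrt{\log s}\le \sqrt{\log\log n}$ with no slack to spare; the subdirect-product filtration, combined with the sharp invariable-generation bound for subnormal subgroups of the primitive linear group $R$ supplied by Part 2 of Theorem \ref{CompRedTheorem}, is exactly what closes the gap.
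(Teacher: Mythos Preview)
Your proposal is correct, and in fact more careful than the paper's own argument. In the primitive case you and the paper do the same thing. In the imprimitive case the paper uses only your Bound~(I), obtaining $\di(G)\ll \frac{s\log p^{r}}{\sqrt{\log s}}$, and then asserts that Lemma~\ref{UsefulBoundGar} finishes the job ``as in the proof of Theorem~\ref{TransInvTheorem}''. But that invocation is problematic: with $a=p^{r}$ and $b=s$ the lemma yields $\frac{s\log p^{r}}{\sqrt{\log s}}\ll \frac{sp^{r}}{\sqrt{\log(sp^{r})}}$, which is not the target $\frac{\log p^{rs}}{\sqrt{\log\log p^{rs}}}$; and if instead one tries $a=\log p^{r}$, $b=s$ so that $ab=\log p^{m}$, the left side of the lemma becomes $\frac{s\log\log p^{r}}{\sqrt{\log s}}$, which is strictly smaller than the quantity actually in hand. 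The underlying difficulty is that Theorem~\ref{Pyb2} only gives $a(R)\ll r\log p$, a linear bound, whereas the analogous step in Theorem~\ref{TransInvTheorem} used the logarithmic bound $a(R)\ll\log r$ from Theorem~\ref{Pyb}. When $s$ is small (say $s=2$) and $r\log p$ is large, Bound~(I) alone gives only $\di(G)\ll\log n$, with no $\sqrt{\log\log n}$ saving.

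Your Bound~(II) is precisely what repairs this: by filtering $K=G\cap R^{s}$ coordinatewise and invoking Part~2 of Theorem~\ref{CompRedTheorem} on each subnormal piece, you get $\di(G)\ll s(\log r+1)$, which is strong exactly when $s$ is small. The threshold split at $s_{0}=(\log n)^{1/4}$ then works cleanly. The only points worth tightening in your write-up are routine: confirm explicitly that each $\pi_{i}(K)$ is normal in $R$ (this uses largeness: $H_{i}\rho_{i}=R$ and $K\trianglelefteq H_{i}$), so that transitivity of $S$ forces all $\pi_{i}(K)$ to coincide as conjugate normal subgroups; and note that Theorem~\ref{CompRedTheorem} Part~2 bounds $\di(HZ/Z)$ rather than $\di(H)$, with the extra $+1$ coming from the cyclic scalar group, exactly as you have it.
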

\begin{proof} The proof here follows the same strategy as the proof of Theorem \ref{TransInvTheorem} above. Suppose first that $G$ is primitive. Then $\di(G)\le a( 2\log{m}+1$ by Theorem \ref{CompRedTheorem} Part(ii). Since $m\le \log{p^{m}}$, it follows that 
$$\di(G)\ll \log{\log{p^m}}\le \frac{\log{p^{m}}}{\sqrt{\log{\log{p^m}}}}$$
as needed.

So we may assume that $G$ is imprimitive. Thus, $G$ is a large subgroup in a wreath product $R\wr S$, where $R\le GL_r(p)$ is primitive, $S\le \Sym(s)$ is transitive of degree $s\geq 2$, and $rs=n$. Since $a(R)\ll\log{p^r}$ by Theorem \ref{Pyb2}, and $\di(S)\ll s/\sqrt{\log{s}}$ by Theorem \ref{TransInvTheorem}, Corollary \ref{37}
 yields
\begin{align}\label{GMLT11}\di(G) &\ll \frac{a(R)s}{\sqrt{\log{s}}}\ll \frac{s\log{p^r}}{\sqrt{\log{s}}}.\end{align}
As in the proof of Theorem \ref{TransInvTheorem} above, the result now follows immediately from Lemma \ref{UsefulBoundGar}.\end{proof}

\begin{proof}[Proof of Theorem \ref{PrimInvTheorem}] We will consider each of the cases of the O'Nan-Scott Theorem for primitive permutation groups; the form of the theorem we use is from \cite{ONanScott}.
\begin{enumerate}[(I)]
\item $G$ is a subgroup in the affine general linear group $AGL_{m}(p)$, and $n=p^{m}$, $p$ prime. Here, $G$ has a unique minimal normal subgroup $B$, which is elementary abelian of order $p^{m}$, and $G/B$ is isomorphic to an irreducible subgroup of $GL_{m}(p)$. The result now follows from Proposition \ref{AffineCase} and Lemma \ref{diMin} Part (ii).
\item $G$ is almost simple. Then $\di(G)\le 5$ by Corollary \ref{KLSSimpleCorOut}.
\item \begin{enumerate}[(a)] 
\item Simple diagonal action. Here, $n=|T|^{k-1}$, where $T$ is a non-abelian finite simple group, and $k\ge 2$. Furthermore, $B:=\Soc{(G)}\cong T^{k}$, and if $P\le \Sym{(k)}$ is the induced action of $G$ on on the direct factors of $B$, then one of the following holds:\begin{enumerate}[(i)]
\item $P$ is primitive, $B$ is the unique minimal normal subgroup of $G$, and $G/B$ has shape $E.P$ where $E\le \Out(T)$, or;
\item $k=2$, $P=1$, and $G\cong B$.\end{enumerate} 
Suppose first that case (i) holds. Then since any subgroup of $\Out(T)$ is invariably $3$-generated, Lemma \ref{diMin} and Theorem \ref{TransInvTheorem} yields $$\di(G)\le 2+\di(G/B)\ll 5+\frac{k}{\sqrt{\log{k}}}\ll \frac{k}{\sqrt{\log{k}}}.$$ Since $k\le \log{n}$ and $k/\sqrt{\log{k}}$ is an increasing function, the result follows.   

In the second case, $G\cong T^{2}$, so $\di(G)\le 3$ by Corollary \ref{KLSSimpleCor}, and the result again follows.
\item Product action. Let $R\le \Sym(r)$ be a primitive permutation group of type (II) or (III)(a), and let $S$ be a transitive permutation group of degree $s$. Then, with the product action, $G$ is a large subgroup of the wreath product $R\wr S$. In particular, $n=r^{s}$. Hence, we have
\begin{align}\label{Gar1} \di(G)\ll \frac{a{(R)}s}{\sqrt{\log{s}}}+2\nab{(R)}+\di(S).\end{align}
by Corollary \ref{37}.

Now, by Proposition \ref{OutPermProp}, $\ab{(R)}\ll \log{\log{r}}$ and $\nab{(R)}=1$ if $R$ is of type (II). If $R$ is of type (III)(a)(i), then adopting the same notation as used in that case above, we have $\ab{(R)}=a{(R/B)}$, and $R/B\le \Out{(T)}\times P$ projects onto the primitive group $P$ of degree $k=\log_{|T|}(r)+1\ll \log{r}$. Then 
\begin{align*}\ab(R)=\ab{(R/B)}\le \log{|\Out{(T)}|}+\ab{(P)} &\ll \log{\log{r}}+\log{k}\ll \log{\log{r}}\end{align*}
and 
\begin{align*}\nab(R)=1+\nab{(R/B)}= 1+\nab{(P)} &\ll 1+\log{k}\ll \log{\log{r}}\end{align*}
by Proposition \ref{OutPermProp} and Theorem \ref{Pyb}. Finally, $\ab{(R)}=0$ and $\nab{(R)}=2$ if $R$ is of type (III)(a)(ii).

Thus, by (\ref{Gar1}) and Theorem \ref{TransInvTheorem} we have
\begin{align}\label{Gar2} \di(G)\ll \frac{s\log{\log{r}}}{\sqrt{\log{s}}}+\frac{s}{\sqrt{\log{s}}}\ll \frac{s\log{\log{r}}}{\sqrt{\log{s}}}.\end{align}
Let $x:=\log{r}$. It now follows immediately from Lemma \ref{UsefulBoundGar} that 
$$\di(G)\ll \frac{{xs}}{\sqrt{\log{xs}}}=\frac{{\log{r^s}}}{\sqrt{\log{\log{r^s}}}}$$
as needed.
\item Twisted wreath action. Here, $G$ is a semidirect product $T^{s}\rtimes S$, where $S$ is a transitive permutation group of degree $s\geq 1$, and $T$ is a non-abelian simple group. Furthermore, $n=|T|^{s}$, and $T^{s}$ is the unique minimal normal subgroup of $G$. If $s=1$, then $\di(G)=\di(T)\le 2$ by Corollary \ref{KLSSimpleCor}, so assume that $s\ge 2$. Lemma \ref{diMin} and Theorem \ref{TransInvTheorem} then give
$$\di(G)\ll\di(G/T^s)\ll\frac{s}{\sqrt{\log{s}}}\le \frac{\log{|T|^{s}}}{\sqrt{\log{\log{|T|^{s}}}}}$$
and the proof is complete.\end{enumerate}\end{enumerate}\end{proof}

\end{document}